\documentclass[10pt]{amsart} 
\usepackage[utf8]{inputenc}
\usepackage{amssymb,amsmath,amsfonts,color,graphicx,soul, multirow, mathabx, amsthm}
\usepackage[shortlabels]{enumitem}
\usepackage{graphicx} 
\usepackage{tikz}
\usetikzlibrary{calc}
\usetikzlibrary{patterns}
\usetikzlibrary{decorations.text}
\newtheorem{thrm}{Theorem}[section]

\newtheorem{lem}[thrm]{Lemma}
\newtheorem{prop}[thrm]{Proposition}
\newtheorem{cor}[thrm]{Corollary}
\theoremstyle{definition}

\newtheorem{deft}[thrm]{Definition}
\newtheorem{rem}[thrm]{Remark}

\newtheorem{quest}[thrm]{Open Questions}
\usepackage{fancyhdr}
\usepackage[unicode,colorlinks,
	citecolor=blue,
	linkcolor=blue,
	urlcolor=black,
]{hyperref}

\newcommand{\hs}{\text{ }}

\newcommand{\ol}{\overline}

\newcommand{\p}{\mathbb{P}}

\newcommand{\G}{\mathbb{G}}

\newcommand{\CB}{\mathcal{B}}
\newcommand{\phii}{\varphi}

\newcommand{\liff}{\leftrightarrow}

\newcommand{\rimp}{\rightarrow}
\newcommand{\dom}{\text{Dom}}

\newcommand{\proves}{\vdash}
\newcommand{\nproves}{\nvdash}

\renewcommand{\>}{\rangle}

\newcommand{\forces}{\Vdash}
\title{On Sets That Encode Themselves}
\author{Taeyoung Em}
\begin{document}
\begin{abstract}
Given partial information about a set, we are interested in fully recovering the original set from what is given. If a set encodes itself robustly, any partial information about the set suffices to fully recover the information about the original set. Jockusch defined a set $A$ to be introenumerable if each infinite subset of $A$ can enumerate $A$, and this is an example of a set which encodes itself. There are several other notions capturing self-encoding differently. We say $A$ is uniformly introenumerable if each infinite subset of $A$ can uniformly enumerate $A$, whereas $A$ is introreducible if each infinite subset of $A$ can compute $A$. We investigate properties of various notions of self-encoding and prove new results on their interactions. Greenberg, Harrison-Trainor, Patey, and Turetsky showed that we can always find some uniformity from an introenumerable set. We show that this can be reversed: we can construct an introenumerable set by patching up uniformity. This gives a rise to a new method of constructing a nontrivial introenumerable or introreducible set.
\end{abstract}
\maketitle
\section{Introduction}
For an infinite set $A\subseteq\omega$, denote $[A]^\omega$ to be the set of all infinite subsets of $A$, and $[A]^{<\omega}$ to be the set of all finite subsets of $A$. An infinite set $A$ is \emph{introreducible} if each $C\in[A]^\omega$ computes $A$. It is \emph{introenumerable}\footnote{Jockusch originally defined the notion of introenumerability in a different way: an infinite set $A$ is introenumerable if $A$ is $C$-c.e.\ for each $C\in[A]^\omega$. Nonetheless, the two definitions, in the uniform case, are equivalent by \cite[Proposition 2.1]{green}, and in the non-uniform case, they are equivalent by a similar argument with the help of Selman's Theorem.} if each $C\in[A]^\omega$ is enumeration\footnote{An enumeration operator $\Theta$ is a c.e.\ set of pairs $\<u,n\>$. For a set $A$, $\Theta(A)$ denotes the set $\{n:\exists u\hs D_u\subseteq A\wedge\<u,n\>\in\Theta\}$. We say $A\leq_e B$ if there is an enumeration operator $\Theta$ such that $\Theta(B)=A$. When $A\leq_e B$, we say $B$ is enumeration above $A$.} above $A$. It is \emph{majorreducible} if $A$ is computable by any function $f$ majorizing $p_A$, the principal function of $A$. Likewise, $A$ is \emph{majorenumerable} if each $f\geq p_A$ is enumeration above $A$. The notions have uniform versions, namely $A$ is \emph{uniformly introreducible} if each $C\in[A]^\omega$ computes $A$ in a uniform way, i.e., there is a Turing operator $\Phi$ such that $\Phi^C=A$ for $C\in[A]^\omega$. The other notions have their uniform versions similarly.

Constructing a nontrivial introenumerable set is a difficult task because unlike uniformly introenumerable sets where there is a single enumeration operator witnessing $\leq_e$, a nontrivial introenumerable set $A$ may have no such uniformity, and we have to ensure that \emph{every} infinite subset of $A$ is enumeration above $A$. Lachlan constructs an introreducible set which is not uniformly introreducible in \cite[Theorem 4.1]{jock}, and this seems to be the only place where constructing a nontrivial introenumerable/introreducible set is introduced in a literature. On the other hand, there has been attempts to find some uniformity from an introenumerable set. Greenberg, Harrison-Trainor, Patey, and Turetsky showed that if $A$ is introenumerable, there is a subset $C\in [A]^\omega$ and an enumeration operator $\Gamma$ such that $\Gamma(C_0)=A$ for all $C_0\in [C]^\omega$ \cite[Proposition 3.7]{green}. We show that this can be reversed, which gives a rise to a new method of building a nontrivial introenumerable set.

\subsection{Background} Dekker and Myhill introduced first-order versions of the notions capturing self-encoding. Indeed, retraceability was the main theme of Dekker and Myhill's paper, where they defined introreducibility as a property of retraceable sets, although introreducibility is in itself very interesting. An infinite set $A$ is \emph{regressive} if $A$ can be written as $A=\{a_0,a_1,\dots\}$, where $a_i$'s are distinct and there exists a partial computable function $\phii$ with $\phii(a_{n+1})=a_n$ for all $n\geq 0$ and $\phii(a_0)=a_0$. It is \emph{retraceable} if such $a_i$'s can be chosen in the increasing way, $a_0<a_1<\cdots$. Retraceable sets have been studied carefully in \cite{dekker} where they proved that every c.e.\ degree has a co-c.e.\ retraceable representative, and that every degree has a retraceable representative. Mansfield (unpublished) proved that complementary\footnote{Let $P$ be a property such as regressive. A set $A$ is complementary $P$ if $A$ and $\ol{A}$ are both $P$.} retraceable sets are computable. Regressive sets have been studied carefully by Appel and McLaughlin in \cite{appel} where they proved that if $A$ and $\ol{A}$ are complementary regressive sets, then one of them is c.e. The Mansfield's result can follow as a corollary to it. We note that there does exist noncomputable complementary regressive sets, unlike retraceable sets. Any co-c.e.\ retraceable set is complementary regressive, and every c.e.\ degree has such a set.

There are some easy implications between these notions which capture self-encoding. Clearly, every retraceable set is uniformly introreducible, and every regressive set is uniformly introenumerable. Furthermore, every (uniformly) majorreducible set is (uniformly) introreducible, as any subset's principal function is sparser than the original set's principal function. Interestingly, the relation between regressive and retraceable sets or the relation between introenumerable and introreducible sets is reminiscent of the relation between c.e.\ and computable sets. Indeed, every regressive set has a retraceable subset \cite{dekker2}, and every uniformly introenumerable set has a uniformly introreducible subset \cite[Theorem 1.4]{green}. These can be thought of as analogues of the fact that every infinite c.e.\ set has an infinite computable subset. 

\subsection{Motivation} Knowing these, it is natural to seek for a similarity between retraceable sets and introreducible sets, and a similarity between regressive sets and introenumerable sets. As retraceable sets do, complementary introreducible sets are computable as shown in \cite[Theorem 2.19]{slaman}. This is an interesting property of introreducible sets, as this does not happen even for uniformly introenumerable sets \cite[Theorem 5.2]{jock}, nor regressive sets. In this sense, retraceable sets share some properties with introreducible sets, and regressive sets share some properties with introenumerable sets. But certainly, retraceability is distinct from introreducibility, and regressive is distinct from introenumerability. If $A$ and $\ol{A}$ are immune semicomputable sets, then $A$ and $\ol{A}$ are hyperimmune uniformly introenumerable \cite[Theorem 5.2]{jock}. So unlike regressive sets, uniformly introenumerable sets can be complementary immune. And we show in this paper several instances where retraceability and introreducibility fall apart.

\subsection{Main results of this paper} Jockusch left a number of interesting open questions in his paper \cite{jock}, and some of them are answered in \cite{green} by Greenberg, Harrison-Trainor, Patey, and Turetsky. In this paper, we address some open questions from \cite{jock} and \cite{green}, improve some of their results, then do a careful analysis on how (uniformly) intro-e/T\footnote{`intro-e/T' is an abbreviation of `introenumerable/introreducible'. Likewise, we sometimes write `major-e' for `majorenumerable' and `major-T' for `majorreducible'.}, (uniformly) major-e/T, regressive/retraceable, and c.e./computable sets coincide/fall far apart from each other, and study some of their properties. Thus, the main focuses of this paper are:

Let $P,Q$ be two distinct classes among class of (uniformly) intro-e/T, (uniformly) major-e/T, regressive/retraceable, and c.e./computable sets.
\begin{enumerate}
\item What are the properties of sets in $P$?

\item Whether every set in $P$ has a subset in $Q$?

\item Whether every set in $P$ is also in $Q$, and if not, in what circumstances, do they coincide?
\end{enumerate}
The first and second items are studied across all sections of this paper, and the last section mainly focuses on the second and the third items. In Section 3, we focus on regressive and retraceable sets and how they relate to majorreducible sets. In Section 4, we focus on introenumerable and introreducible sets, where we also introduce a new method of building an introenumerable set. The open questions of this paper are addressed in Section 4 Open Questions \ref{open questions}.
\begin{figure}[!ht]
\centering
\begin{tikzpicture}[scale=2.5, line join=round]

  \fill[gray!30] (1, 0) rectangle (4, 1);

  \draw[thick] (0, 0) grid (4, 2);

  \draw[line width=3pt] (2,0) -- (4,0);
  \draw[line width=3pt] (4,0) -- (4,1);
  \draw[line width=3pt] (3,1) -- (4,1);
  \draw[line width=3pt] (2,0) -- (3,1);

  \draw[thin] (-0.2, 0) -- (-0.2, 0.25);
  \draw[thin] (-0.2, 0.75) -- (-0.2, 1);
  \draw[thin] (0, 0) -- (-0.2, 0);
  \draw[thin] (0, 1) -- (-0.2, 1);
  \node[rotate=90, anchor=south] at (-0.13, 0.5) {intro-T};

  \draw[thin] (1,2) -- (1,2.2);
  \draw[thin] (1,2.2) -- (2.12,2.2);
  \draw[thin] (2.88,2.2) -- (4,2.2);
  \draw[thin] (4,2.2) -- (4,2);
  \node[anchor=south] at (2.5, 2.12) {unif intro-e};

  \draw[thin] (2,0) -- (2,-0.2);
  \draw[thin] (2,-0.2) -- (2.65,-0.2);
  \draw[thin] (3.35,-0.2) -- (4,-0.2);
  \draw[thin] (4,-0.2) -- (4,0);
  \node[anchor=north] at (3, -0.1) {regressive};

  \draw[thin] (3,2) -- (3.15,2.4);
  \draw[thin] (3.15,2.4) -- (3.35,2.4);
  \draw[thin] (3.65,2.4) -- (3.85,2.4);
  \draw[thin] (3.85,2.4) -- (4,2);
  \node[anchor=north] at (3.5,2.48) {c.e.};
  \node at (2.75,0.35) {\footnotesize $\bullet$\cite{dekker}};
  \node at (2.4,0.7) {\footnotesize $\bullet$\ref{regressive intro T}};
  \node at (2.6,1.5) {\footnotesize $\bullet$\ref{regressive not intro T}};
  \node at (1.6,1.5) {\footnotesize $\bullet$\ref{neither}};
  \node at (0.6,1.6) {\footnotesize $\bullet$\ref{ienotuie forcing proof}};
  \node at (0.6,1.4) {\footnotesize $\bullet$\ref{intro e neither}};
  \node at (0.6,0.5) {\footnotesize $\bullet$\ref{unif intro e vs intro T}};
  \node at (1.6,0.5) {\footnotesize $\bullet$\ref{remark}};
  \node at (3.5, 0.5) {\footnotesize Computable};
  \node[align=center] at (3.5,1.5) {\footnotesize Noncomputable\\c.e.};
\end{tikzpicture}
\caption{The universe of introenumerable sets. The bottom four boxes (to the right of `intro-T' label) are the introreducible sets, and the right six boxes (under the `unif intro-e' label) are the uniformly introenumerable sets. Similarly, the four right-most boxes are the regressive sets, and the two right-most boxes are the c.e.\ sets. The intersection of the uniformly introenumerable boxes and the introreducible boxes is colored in gray, and it is exactly for the uniformly introreducible sets. The trapezoid with the thick lines is for the retraceable sets.}
\label{figure1}
\end{figure}
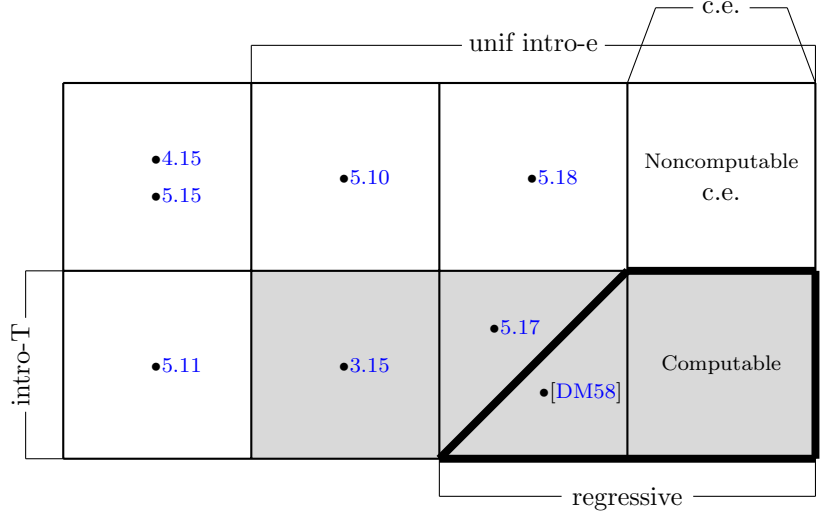

Figure \ref{figure1}---which is the universe of introenumerable sets---illustrates item (3). For example, the right triangle inside the trapezoid is for the retraceable, uniformly introreducible, but not c.e.\ sets, whereas the right triangle outside the trapezoid is for regressive, uniformly introreducible, but not retraceable sets. The box at the top-left corner is for the introenumerable sets which are neither uniformly introenumerable nor introreducible. In each area of the figure, we have indicated where it is proved that the area is nonempty. A similar picture is true for (uniformly) major-e/T sets which is proved in the last section, but is omitted in the figure for visibility.

For two notions $P$, $Q$, we write $P\subseteq Q$ and say $P$ is a subset of $Q$ if every set in $P$ is in $Q$. We say $P$ is \emph{refinable} to $Q$ if every set in $P$ has a subset which is in $Q$. We say two notions $P$, $Q$ are \emph{distinct} if there is a set in one of them which is not in the other. Hence, together with Figure \ref{figure1}, we establish:
 \begin{thrm}\label{separation}
 Uniformly major-T/e, major-T/e, uniformly intro-T/e, intro-T/e, regressive, retraceable, c.e., and computable are all distinct notions. Any two notions $P$, $Q$ from the following four are refinable to each other.
 $$\text{major-T/e and uniformly major-T/e}.$$
 \end{thrm}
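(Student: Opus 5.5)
The theorem is a summary statement, and I would prove it in two parts: distinctness, then mutual refinability among the four major notions.

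\textbf{Distinctness.} Most separations are read off Figure \ref{figure1}. Each region marked there has a witness, either cited or constructed later in the paper. Given any two notions $P,Q$ among intro-T/e, uniformly intro-T/e, regressive, retraceable, c.e.\ and computable, I would locate a region lying inside one and outside the other and take its witness. Two easy cases:
\begin{enumerate}[(a)]
\item regressive versus retraceable uses the regressive, uniformly introreducible, non-retraceable region;
\item introenumerable versus uniformly introenumerable uses the top-left box.
\end{enumerate}
For the major notions I would prove an analogous picture in the last section. Since major-T implies intro-T, a set that is introreducible but not majorreducible separates major-T from intro-T. A natural candidate is a retraceable set whose principal function is dominated by a computable function but which is noncomputable: any $f \ge p_A$ is then dominated by a computable function, so $f$ is too weak to compute $A$. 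The separations uniform versus non-uniform major-T/e need a Lachlan-style construction, or the patching method of Section 4 transferred to majorizing functions. I would also separate major-e from major-T using complementary semicomputable immune sets, as in \cite[Theorem 5.2]{jock}. These are hyperimmune and uniformly introenumerable, but not introreducible by \cite[Theorem 2.19]{slaman}, since complementary introreducible sets are computable.

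\textbf{Refinability.} Uniformly major-T implies each of the other three notions. So it suffices to show that every majorenumerable set $A$ has an infinite subset that is uniformly majorreducible. My plan is to pick a sparse subset $C \subseteq A$ such that any $f \ge p_C$ dominates $p_A$ at some computable shift, and to make $C$ retraceable-like relative to $A$ so that $C$ is uniformly decodable. Concretely:
\begin{enumerate}[(i)]
\item Since $A$ is majorenumerable, every $f \ge p_A$ enumerates $A$. By a Baire-category or forcing argument over the functions majorizing $p_A$, in the spirit of \cite[Proposition 3.7]{green}, fix a single enumeration operator $\Gamma$ and a condition $\sigma$ such that $\Gamma(f) = A$ for all $f \ge p_A$ extending $\sigma$.
\item Choose $C = \{c_0 < c_1 < \cdots\} \subseteq A$ growing fast enough that $c_{n+1}$ appears in the $\Gamma$-enumeration from the canonical function $\max(\sigma, p_A)$ restricted to stage $c_n$ data, and that each $c_{n+1}$ encodes $A \cap [0, c_n]$.
\item Given $g \ge p_C$, uniformly build some $f \ge p_A$ extending $\sigma$, and retrace $C$ by running $\Gamma(f)$ up to bounds supplied by $g$. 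The output is a uniform computation of $C$.
\end{enumerate}

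\textbf{Main obstacle.} I expect step (i) to be the hardest, that is, extracting a single uniform operator from the non-uniform majorenumerable hypothesis. I would first attempt it by the argument of \cite[Proposition 3.7]{green} applied to majorants, with the subset relation replaced by pointwise domination.
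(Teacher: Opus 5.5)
\textbf{Refinability.} Reducing refinability to ``every majorenumerable set has a uniformly majorreducible subset'' is correct, but your proof of that has a genuine gap.

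Step (i) is false as stated. Suppose $\Gamma(f)=A$ for every $f\ge p_A$ extending $\sigma$; for this to be non-vacuous, $\sigma\ge p_A$. Then overwrite the first $|\sigma|$ values of an arbitrary $f\ge p_A$ by $\sigma$ and apply $\Gamma$. This makes $A$ \emph{uniformly} majorenumerable. But Lachlan's co-c.e.\ set from Corollary \ref{majortnotuniform} is majorenumerable and not uniformly so. A Hechler-type argument only yields a condition $(\sigma,g)$ with $g$ possibly far above $p_A$.

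More seriously, (ii)--(iii) cannot work with only an enumeration operator in hand:
\begin{itemize}
\item To single out $c_{n+1}$ among the numbers in $(c_n,g(n+1)]$, you must know which of them are \emph{not} the intended element. That is negative information about $A$, which $\Gamma(f)$ never supplies.
\item Which element ``appears first'' in the enumeration depends on the majorant used.
\item $c_{n+1}$ cannot be made to encode $A\cap[0,c_n]$, since it has to be an element of $A$.
\end{itemize}
Behind this is a definability obstruction you never address. A uniformly majorreducible set is hyperarithmetic (Corollary \ref{hyp maj}), so your $C$ must be $\Delta^1_1$. Nothing in your construction controls its complexity.

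The missing idea is to use the forcing for definability rather than uniformity. A condition $(\sigma,g)$ forcing $W_e^G=A$ yields a $\Pi^1_1$ definition of $A$ (Theorem \ref{pi11}, Corollary \ref{majorpi11}). Luckham's theorem then gives an infinite hyperarithmetic subset. Jockusch's result that every infinite $\Delta^1_1$ set has a uniformly majorreducible subset (Corollary \ref{jock sol}) finishes; this is Corollary \ref{refinable}. Your (ii)--(iii) is roughly Jockusch's self-modulus construction, and that construction needs a Turing modulus, hence a hyperarithmetic set, as input.

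\textbf{Distinctness of the major notions.} These separations are missing their key steps.
\begin{itemize}
\item \emph{Semicomputable sets.} For complementary immune semicomputable sets, uniform introenumerability is not what is needed. You must show they are (uniformly) majorenumerable, which is Lemma \ref{semirecursivemajor}: sort $[0,f(n)]$ with the selector and keep the first $n+1$ elements. With that, your appeal to ``complementary introreducible sets are computable'' does yield one of $A,\ol{A}$ that is majorenumerable but not majorreducible.
\item \emph{Uniform versus non-uniform.} A Lachlan-style construction alone only gives an introreducible set. The needed link is Theorem \ref{coceintroe} (co-c.e.\ introenumerable implies majorreducible), which makes Lachlan's set majorreducible. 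Uniform majorenumerability would then make it uniformly introenumerable, which it is not.
\item \emph{Omitted cases.} Separations such as a uniformly majorreducible set that is not regressive (Remark \ref{remark}) are not addressed.
\item \emph{Your retraceable example.} The computably dominated noncomputable retraceable set does exist, e.g.\ $\{\pi(X\upharpoonright n):n\in\omega\}$ as in Theorem \ref{retraceable not umt} with $X$ noncomputable. But it fails to be majorreducible because some computable $h\ge p_A$ exists. It is not because every majorant is dominated by a computable function, which is false. Argued correctly, this is a nice simplification: $h$ would even enumerate $A$, and a c.e.\ retraceable set is computable. So you get a retraceable set that is not majorenumerable, a simpler witness for Corollary \ref{retraceable not me} than the paper's.
\end{itemize}
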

 The last assertion is proved at the end of Section 2. That the notions uniformly major-T/e and major-T/e are distinct is established at the beginning of Section 5. We indicate below where it is proved for those that do not follow from the figure above.
 \begin{itemize}
 \item A uniformly majorenumerable set which is not introreducible: by Corollary \ref{ume not umt}.
 \item A uniformly majorreducible set which is not regressive: by Remark \ref{remark}.
 \item A retraceable set which is not majorenumerable: by Corollary \ref{retraceable not me}.
 \end{itemize}
\subsection{Binary relations}
It is useful to define binary relations for the notions capturing self-encoding. Most of the binary relations defined here, excluding those about major-T/e, are introduced in \cite{green}.
\begin{deft}
For infinite sets $A,B\subseteq\omega$, we say $A\leq^i_T B$ if each $C\in[B]^\omega$ computes $A$, read as $B$ \emph{introcomputes} $A$. Likewise, we can define the uniform version, $A\leq^{ui}_T B$. Of course, this can be defined for other reductions than the Turing reduction: $A\leq^i_e B$ if each $C\in [B]^\omega$ is enumeration above $A$; $A\leq^m_T B$ if any $f\geq p_B$ computes $A$, taking the oracle as the graph of $f$; $A\leq^m_e B$ if any $f\geq p_B$ is enumeration above $A$. And we define their uniform versions similarly.
\end{deft}
It is shown in \cite[Proposition 2.1]{green} that $\leq_{e}^{ui}$ and $\leq_{c.e.}^{ui}$ coincide, and a similar argument (with Selman's Theorem) shows that $\leq_e^i$ and $\leq_{c.e.}^i$ coincide. Likewise, a similar argument shows that $\leq^{um}_e(\leq^m_e)$ coincides with $\leq^{um}_{c.e.}(\leq^m_{c.e.})$.
\section{Majorenumerable and majorreducible sets}
We show that every majorenumerable set has a uniformly majorreducible subset in this section. It is Solovay's result in \cite{solov} that hyperarithmetic sets are exactly those with a uniform modulus. From his results, we get that $B$ is hyperarithmetic iff $B\leq_T 0^{(\beta)}$ for some $\beta<\omega_1^{ck}$ and there is some $f\equiv_T 0^{(\beta)}$ such that $B\leq^{um}_T f$ iff there is some $f$ such that $B\leq^{um}_T f$. The below theorem is a sharpened version of \cite[Proposition 2.3]{green} that a set $A$ is $\Pi_1^1(\text{or } \Sigma_1^1)$ if and only if there is $f$ such that $A\leq^{um}_{\text{c.e.}} (\text{or }\leq^{um}_{\text{co-c.e.}}\text{ resp.}) f$.

In the below proof and later proofs, we use a method of forcing, where we use a term \emph{sufficiently generic}. On a forcing notion $\p$ and a filter $\G$ on $\p$, $\G$ is a sufficiently generic filter if it meets a countable collection of dense sets that we are considering---a collection $(D_i,i\in\omega)$ which should be clear from each context. We say a set $D\subseteq \p$ is \emph{dense above} $\G$ if for each $p\in \G$, there is $q\leq_\p p$ with $q\in D$. Let $R$ be a property that we want our sufficiently generic filter to posses. It suffices to show that $\{p\in\p:p\forces R\}$ is dense above $\G$ because $\{p\in\p:p\forces R\vee p\forces\neg R\}$ is a dense set, and $\G\cap\{p\in\p:p\forces\neg R\}$ is empty when $\{p\in\p:p\forces R\}$ is dense above $\G$.
\begin{thrm}\label{pi11}
Let $A$ be an infinite set. The following are equivalent.
\begin{itemize}
\item $A$ is $\Pi_1^1(\Sigma^1_1)$.
\item There is $f$ such that $A\leq^{um}_{\text{c.e.}} (\leq^{um}_{\text{co-c.e.}}) f$.
\item There is $f$ such that $A\leq^{m}_{\text{c.e.}} (\leq^{m}_{\text{co-c.e.}}) f$.
\end{itemize}
\end{thrm}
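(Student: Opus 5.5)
The implication from the first item to the second is \cite[Proposition 2.3]{green}, and the second implies the third trivially. So the real work is showing that the third item implies the second. After that, \cite[Proposition 2.3]{green} gives $\Pi^1_1$ again, and the co-c.e./$\Sigma^1_1$ case is handled the same way, with $\overline{A}$ in place of $A$. The plan is to extract uniformity from non-uniformity by forcing, in the spirit of \cite[Proposition 3.7]{green}. The output should be a single operator and a single new modulus $\tilde h$ such that $A$ is uniformly c.e.\ in every $g\geq\tilde h$.

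\textbf{Forcing notion.} I use a Mathias-style notion. Conditions are pairs $(\sigma,h)$ where $\sigma\in\omega^{<\omega}$ and $h\geq f$. We set $(\tau,h')\leq(\sigma,h)$ if the following hold:
\begin{itemize}
\item $\tau\supseteq\sigma$;
\item $\tau(i)\geq h(i)$ for $|\sigma|\leq i<|\tau|$;
\item $h'\geq h$.
\end{itemize}
A filter yields a function $G=\bigcup\sigma$, and $G\geq f$ off the stem. Patching the finitely many stem values upward does not affect c.e.-ness, so by hypothesis $A$ is c.e.\ in $G$, say $A=W_e^G$ for some $e$.

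\textbf{Main claim.} There exist $e$ and a condition $(\sigma,h)$ satisfying two properties:
\begin{enumerate}
\item[(a)] no extension $(\tau,h')\leq(\sigma,h)$ has $n\in W_e^\tau$ for some $n\notin A$;
\item[(b)] every extension of $(\sigma,h)$ and every $n\in A$ admit a further extension $(\tau,h')$ with $n\in W_e^\tau$.
\end{enumerate}
Suppose no such pair exists. Then I build a descending sequence of conditions that, at stage $e$, defeats $W_e^G=A$.
\begin{itemize}
\item If (a) fails, pass to an extension whose finite stem already enumerates some $n\notin A$.
\item If (b) fails, pass to an extension $(\sigma',h')$ and an $n\in A$ such that no extension of $(\sigma',h')$ has a stem enumerating $n$. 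Every further stem then fails to enumerate $n$, so $n\notin W_e^G$.
\end{itemize}
Interleave steps that lengthen the stem so that $G$ is total. The resulting $G$ is, after finite patching, $\geq f$, and $A$ is not c.e.\ in it, contradicting $A\leq^m_{\text{c.e.}}f$. Only countably many steps are needed, one per $e$, so no genuine genericity over uncountably many functions is required.

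\textbf{Extracting uniformity.} Fix such $e,(\sigma,h)$. For any function $g$ define
$$\Gamma^g=\{n:\exists\tau\supseteq\sigma\ (\forall i\in[|\sigma|,|\tau|))\ \tau(i)\geq g(i)\wedge n\in W_e^\tau\}.$$
This set is uniformly c.e.\ in $g$, since $\sigma$ and $e$ are fixed finite data. Let $\tilde h\geq h$ be any function, e.g.\ $\tilde h=h$. Take $g\geq\tilde h$ and set $g'=\max(g,h)=g$.
\begin{itemize}
\item \emph{Soundness.} Any witness $\tau$ for $\Gamma^g$ gives a condition $(\tau,g')\leq(\sigma,h)$ with $n\in W_e^\tau$. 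By (a), $n\in A$, so $\Gamma^g\subseteq A$.
\item \emph{Completeness.} For $n\in A$, apply (b) to the extension $(\sigma,g')$. This gives $(\tau,h'')\leq(\sigma,g')$ with $n\in W_e^\tau$, and the new part of $\tau$ is $\geq g$, so $n\in\Gamma^g$.
\end{itemize}
Hence $A\leq^{um}_{\text{c.e.}}\tilde h$, which is the second item.

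\textbf{Expected obstacle.} The delicate step is the main claim, in particular choosing the forcing. With plain strings above a fixed $f$, the forcing relation does not transfer to functions $g\geq f$, and one would obtain a different $(\sigma_g,e_g)$ for each $g$. Allowing the side function $h$ to grow inside conditions is what makes the witness $(\sigma,h)$ work simultaneously for all $g\geq h$. I expect the main care to go into verifying that the diagonalization in the claim really produces a single function majorizing $f$.
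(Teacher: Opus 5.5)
Your proof is correct and is essentially the paper's argument. You use the same Hechler-type forcing with growing side functions, and the same key fact: some index $e$ and condition $(\sigma,h)$ satisfy your (a) (the paper's property (2)) and your (b) (the paper's property (1)); you get that condition by an explicit diagonalization rather than a sufficiently generic filter, and you then read off that $h$ itself is a uniform c.e.\ modulus and cite \cite[Proposition 2.3]{green} for $\Pi^1_1$, whereas the paper writes the $\Pi^1_1$ definition $n\in A\liff\forall h\le\sigma\,\exists\tau\succ\sigma\,(\tau\geq h\wedge n\in W_e^\tau)$ directly from the same two properties.
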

\begin{proof}
It suffices to show the $\Pi^1_1$ case. The first and second items' equivalence is due to \cite[Proposition 2.3]{green}. And the implication from the second to the third item is obvious. 

For the implication from the third to the first item, assume there is $f$ such that $A\leq^m_{c.e.}f$. Say a string $\sigma\in \omega^{<\omega}$ is above a function $g$, denoted by $\sigma\geq g$, if $\sigma(n)\geq g(n)$ for all $n$ where $\sigma$ is defined. We consider the following forcing notion $\p$ which is a form of Hechler forcing. $\p$ consists of the pairs $(\sigma,g)$ such that $\sigma\geq g\geq f$. Say $(\sigma_2,g_2)$ extends $(\sigma_1,g_1)$ if $\sigma_2\succ\sigma_1$ and $g_2\geq g_1$. Let $\G$ be a sufficiently generic filter on $\p$, and let $G=\bigcup\{\sigma:\exists p\in\G\hs(p=(\sigma, g)\text{ for some }g)\}$. It is clear that $G$ is a total function majorizing $f$. From that $A\leq^m_{c.e.} f$, there is some $e$ such that $W_e^G=A$. Then by the sufficient genericity, it must be that for some $(\sigma,g)\in\G$, $(\sigma,g)\forces W_e^G=A$---as otherwise, $\{(\sigma,g)\in\p:(\sigma,g)\forces W_e^G\neq A\}$ would be dense above $\G$. This means:

\begin{enumerate}
\item For every $m\in A$ and every $h$ such that $\sigma\geq h\geq g$, there exists $\tau$ such that $\tau\succ \sigma$ and $\tau\geq h$ with $m\in W_e^\tau$.

\item For every $m$ and every $(\tau,h)$ extending $(\sigma,g)$ in $\p$, if $m\in W_e^\tau$, then $m\in A$.
\end{enumerate}
From these, we obtain a $\Pi^1_1$ description of $A$:
$$\forall n[n\in A\liff\forall h\leq\sigma\hs\exists\tau(\tau\succ\sigma\wedge\tau\geq h)\hs n\in W_e^\tau].$$
\end{proof}
Every majorreducible set has a modulus. Note that having a modulus implies that it is hyperarithmetic.
\begin{cor}[Solovay]\label{hyp maj}
An infinite set $A$ is hyperarithmetic iff there is $f$ with $A\leq^m_T f$ iff there is $f$ with $A\leq_T^{um}f$. In particular, every majorreducible set is hyperarithmetic.
\end{cor}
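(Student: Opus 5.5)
We prove the cycle: $A$ hyperarithmetic $\Rightarrow$ there is $f$ with $A\leq^{um}_T f$ $\Rightarrow$ there is $f$ with $A\leq^m_T f$ $\Rightarrow$ $A$ hyperarithmetic. The middle implication is trivial, since a uniform reduction is in particular a reduction.

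For the first implication we take the route through Solovay's characterization recalled at the start of this section. If $A$ is hyperarithmetic, then $A\leq_T 0^{(\beta)}$ for some $\beta<\omega_1^{ck}$. Solovay provides a uniform modulus for $0^{(\beta)}$: a function $f\equiv_T 0^{(\beta)}$ such that $0^{(\beta)}$ is computed uniformly by every $g\geq f$. Composing this uniform procedure with a fixed Turing reduction of $A$ to $0^{(\beta)}$ yields a single Turing operator $\Phi$ with $\Phi^g=A$ for all $g\geq f$. This is exactly $A\leq^{um}_T f$, reading the relation with $f$ itself playing the role of the majorized function. If one prefers not to lean on Solovay, the same conclusion follows from Theorem \ref{pi11}: since $A$ and $\ol{A}$ are both $\Pi^1_1$, there are $f_1,f_2$ with $A\leq^{um}_{c.e.} f_1$ and $\ol{A}\leq^{um}_{c.e.} f_2$. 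Taking $f=\max(f_1,f_2)$ gives both uniform enumerations from any $g\geq f$, and running them in parallel decides $A$ uniformly.

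For the last implication, assume $A\leq^m_T f$. Every $g\geq f$ then computes $A$, so in particular $A$ is c.e.\ in $g$ and $\ol{A}$ is c.e.\ in $g$. Hence $A\leq^m_{c.e.} f$ and $\ol{A}\leq^m_{c.e.} f$. Applying the third-to-first direction of Theorem \ref{pi11} to $A$ and to $\ol{A}$ shows that both are $\Pi^1_1$. Thus $A$ is $\Delta^1_1$, which by Kleene's theorem means $A$ is hyperarithmetic. The only point needing care is that Theorem \ref{pi11} is stated for infinite sets, so it may be applied to $\ol{A}$ only when $\ol{A}$ is infinite. If $\ol{A}$ is finite, then $A$ is cofinite and hence computable, so the conclusion holds trivially.

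The "in particular" clause then follows. If $A$ is majorreducible, then $A\leq^m_T p_A$, so the function $f=p_A$ witnesses the second condition and $A$ is hyperarithmetic. The main obstacle is the first implication, which depends on Solovay's uniform modulus theorem (or on the cited \cite[Proposition 2.3]{green} inside Theorem \ref{pi11}). Since both are quoted results, everything else is bookkeeping.
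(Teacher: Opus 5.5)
Your proposal is correct and follows essentially the paper's derivation. The direction from hyperarithmetic to a uniform modulus is Solovay's theorem quoted at the start of Section 2, the middle implication is trivial, and the direction from a modulus to $\Delta^1_1$ is the third-to-first implication of Theorem \ref{pi11}. That theorem comes in both $\Pi^1_1$ and $\Sigma^1_1$ forms, so you could apply the $\Sigma^1_1$ clause to $A$ itself rather than passing to $\ol{A}$, which would make your finite-complement case split unnecessary.
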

Jockusch proved that every infinite $\Pi^1_1$ set has a uniformly majorreducible subset by noting that every infinite $\Pi^1_1$ set has an infinite hyperarithmetic subset. The converse is true as well.
\begin{cor}[Jockusch, Solovay]\label{jock sol}
$A$ has a uniformly majorreducible subset iff $A$ has a majorreducible subset iff $A$ has a $\Delta^1_1$ subset.
\end{cor}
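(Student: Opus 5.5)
The plan is to prove the three conditions equivalent in a cycle: uniformly majorreducible subset $\Rightarrow$ majorreducible subset $\Rightarrow$ $\Delta^1_1$ subset $\Rightarrow$ uniformly majorreducible subset. The first implication needs no work, since a uniformly majorreducible set is majorreducible.

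For the second implication, let $B\in[A]^\omega$ be majorreducible. By Corollary \ref{hyp maj}, every majorreducible set is hyperarithmetic. So $B$ itself is an infinite $\Delta^1_1$ subset of $A$, and nothing further is needed. The content here is Solovay's theorem, as packaged in Theorem \ref{pi11} and Corollary \ref{hyp maj}: if $B\leq^m_T p_B$, then in particular there is an $f$ with $B\leq^m_T f$.

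For the third implication, let $D\in[A]^\omega$ be hyperarithmetic. By Corollary \ref{hyp maj} there is a function $f$ with $D\leq^{um}_T f$, witnessed by a Turing functional $\Phi$ such that $\Phi^g=D$ for every $g\geq f$. Following Jockusch, we thin $D$ to a sparse subset that encodes $f$. Set $b_0=\min D$ and
\[ b_{n+1}=\min\{x\in D: x>b_n,\ x\geq f(n+1)\}, \]
and let $B=\{b_0<b_1<\cdots\}$. Then $p_B(n)=b_n\geq f(n)$ for $n\ge 1$, and we may arrange this for $n=0$ as well by raising $b_0$ to be at least $f(0)$.

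Now let $h\geq p_B$ be given. Then $h\geq f$, so $\Phi^h=D$ uniformly. To recover $B$ uniformly from $h$ and $D$, run the recursion defining $b_n$. Each step needs $f(n+1)$, which we do not have, so this is the main obstacle.

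To get around it, I would build $f$ into the choice of $B$ itself, by insisting that $b_{n+1}$ be the least element of $D$ above $\max(b_n,f(n+1))$. This by itself still needs $f$. The fix I would try is to replace $f$ with a self-modulus. Since $D$ is hyperarithmetic, Corollary \ref{hyp maj} (Solovay) gives a uniform modulus $f$ for $D$ that we can assume is increasing and computable from $D$. Define $B$ greedily: $b_{n+1}$ is the least $x\in D$ with $x>f(b_n)$. A given $h\geq p_B$ computes $D=\Phi^h$, hence computes $f$ and $B$'s recursion. The required inequality is $h\ge f$, which holds because $b_n\ge n$ and $h(n)\ge b_n> f(b_{n-1})\ge f(n-1)$. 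This is off by one, and I would fix it by working with $f'(n)=f(n+1)$ as the modulus and its functional. Checking that this shift is harmless, and that the modulus can be taken $D$-computable, is the step I expect to require the most care.
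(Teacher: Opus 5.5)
Your first two implications are correct and match the paper's argument: a majorreducible subset is hyperarithmetic by Corollary~\ref{hyp maj}. For the last implication the paper simply cites Jockusch's Corollary 6.9. Your direct construction there has a genuine gap.

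The problem is the claim that a uniform modulus $f$ for $D$ can be taken computable from $D$, which is false in general. Let $D$ be a noncomputable hyperarithmetic set of hyperimmune-free degree; such sets exist below $0''$, by Miller and Martin. If $f\le_T D$ were a modulus for $D$, some computable $g\ge f$ would exist, and $g$ would compute $D$, a contradiction. Corollary~\ref{hyp maj} gives a uniform modulus for $D$ but says nothing about its degree. Without $f\le_T D$, an oracle $h\ge p_B$ that only recovers $D=\Phi^h$ cannot run your recursion $b_{n+1}=\min\{x\in D:x>f(b_n)\}$.

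The missing idea is to let $h$ decode a larger set than $D$, one that does have a uniform self-modulus.
\begin{itemize}
\item Choose $\beta<\omega_1^{ck}$ with $D\le_T Z:=0^{(\beta)}$, and use a uniform modulus $f$ of $Z$ with $f\le_T Z$.
\item That the iterated jumps have such self-moduli is the real content of Solovay's analysis as Jockusch uses it. It is proved by effective transfinite recursion. For $Z'$, take the pointwise maximum of a self-modulus of $Z$ and the settling-time function of $Z'$ as a $Z$-c.e.\ set. At limit levels, combine the earlier moduli using shifts.
\item Take $f$ nondecreasing and define $B$ greedily from $D$ and $f$ exactly as you did. Then $B\le_T Z$.
\item For any $h\ge p_B$, the function $h'(n)=h(n+1)\ge b_{n+1}>f(b_n)\ge f(n)$ majorizes $f$. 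So $h'$ uniformly computes $Z$, hence $D$, $f$, and $B$.
\end{itemize}
This also settles your off-by-one concern.
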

\begin{proof}
If $A$ has a majorreducible subset $B$, then by Corollary \ref{hyp maj}, $B$ is hyperarithmetic. Conversely, if $A$ has a $\Delta_1^1$ subset $B$, then $A$ has a uniformly majorreducible subset \cite[Corollary 6.9]{jock}.
\end{proof}
\begin{cor}\label{majorpi11}
Every majorenumerable set is $\Pi_1^1$.
\end{cor}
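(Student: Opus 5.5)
Corollary \ref{majorpi11} should follow directly from Theorem \ref{pi11}. Let $A$ be majorenumerable. By definition, every $f\geq p_A$ is enumeration above $A$, which means $A\leq^m_e A$. The discussion after the definition of the binary relations says that $\leq^m_e$ coincides with $\leq^m_{c.e.}$. Hence $A\leq^m_{c.e.} p_A$ in the sense of Theorem \ref{pi11}: for every $g\geq p_A$, $A$ is c.e.\ in (the graph of) $g$. Taking $f=p_A$ in the third item of Theorem \ref{pi11} gives that $A$ is $\Pi^1_1$.

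The one step that needs care is the passage from $\leq^m_e$ to $\leq^m_{c.e.}$. The relation $A\leq^m_e f$ asks that the graph of each $g\geq f$ be enumeration above $A$. The graph of a total function has the same enumeration degree as the function's Turing degree: the complement of the graph is enumerable from the graph by the totality trick, namely $(n,m)\notin g$ iff there is $m'\neq m$ with $(n,m')\in g$. So enumeration reducibility to the graph is the same as being c.e.\ in $g$, and the coincidence holds.

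If one prefers not to rely on that coincidence, the forcing proof of Theorem \ref{pi11} can be rerun verbatim. For a sufficiently generic Hechler function $G\geq p_A$, we have $\Theta_e(\mathrm{graph}(G))=A$ for some $e$. Some condition $(\sigma,g)$ forces this. As in that proof, this yields the description
$$n\in A \liff \forall h\leq\sigma\,\exists\tau\succ\sigma\,(\tau\geq h \wedge n\in\Theta_e(\mathrm{graph}(\tau))),$$
where $h$ ranges over functions agreeing with the constraint $h\geq g$. This description is $\Pi^1_1$.

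No real obstacle is expected; the corollary is essentially an instance of Theorem \ref{pi11} with $f=p_A$.
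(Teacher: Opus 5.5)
Your main argument is correct and is exactly the intended derivation: majorenumerability says every $g\geq p_A$ is enumeration above $A$, your observation that the complement of the graph of a total function is enumeration reducible to the graph turns this into $A\leq^m_{c.e.} p_A$, and Theorem \ref{pi11} with $f=p_A$ then gives $A\in\Pi^1_1$. One caution about your optional rerun of the forcing: restricting $h$ to functions with $h\geq g$ leaves $g$, which need not be hyperarithmetic, as a parameter and so yields only $\Pi^1_1(g)$. As in the paper, let $h$ range over all functions bounded by $\sigma$ on $\dom(\sigma)$; use the condition $(\sigma,\max(h,g))$ for the forward direction and the instance $h=g$ for the converse.
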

The following proves the last assertion of Theorem \ref{separation}.
\begin{cor}\label{refinable}
Every majorenumerable set has a uniformly majorreducible subset.
\end{cor}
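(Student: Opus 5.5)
The plan is to combine Corollary \ref{majorpi11} with Jockusch's refinement result for $\Pi^1_1$ sets, as recalled before Corollary \ref{jock sol}. Let $A$ be majorenumerable. Then $A$ is infinite by definition, and by Corollary \ref{majorpi11} it is $\Pi^1_1$. It would suffice to find an infinite $\Delta^1_1$ subset $B\subseteq A$: Corollary \ref{jock sol} (equivalently, \cite[Corollary 6.9]{jock}) then gives a uniformly majorreducible subset of $A$.

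The main step is therefore to show that every infinite $\Pi^1_1$ set $A$ has an infinite hyperarithmetic subset. I would argue this by $\Pi^1_1$ uniformization, or by the usual stage-comparison for $\Pi^1_1$ sets. Fix a $\Pi^1_1$ norm on $A$, that is, ordinal stages $|n|<\omega_1^{ck}$ at which $n$ enters $A$. Since $A$ is infinite, the set
\[
S=\{\<k,n\>: n\in A,\ n>k\}
\]
is $\Pi^1_1$ and total in $k$. Apply $\Pi^1_1$ uniformization to $S$ to obtain a $\Pi^1_1$ function $g$ with $g(k)>k$ and $g(k)\in A$. A total $\Pi^1_1$ function on $\omega$ has a $\Delta^1_1$ graph, because $g(k)\neq m$ iff $\exists m'\neq m\,(g(k)=m')$. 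Hence $g$ is hyperarithmetic. Iterating it, $b_0=g(0)$ and $b_{i+1}=g(b_i)$, yields a strictly increasing hyperarithmetic sequence in $A$. Its range $B$ is an infinite $\Delta^1_1$ subset of $A$.

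The only potential obstacle is this uniformization step. If one prefers to avoid it, one can instead use Kleene's theorem that a nonempty $\Pi^1_1$ set of numbers has a hyperarithmetic element selected uniformly. Alternatively, one can note that there are unboundedly many $n\in A$ with $|n|$ bounded by a single computable ordinal $\alpha$. Otherwise the map $k\mapsto\min\{|n|:n\in A,\ n>k\}$ would give a $\Sigma^1_1$-bounded collection of ordinals cofinal in $\omega_1^{ck}$, contradicting $\Sigma^1_1$ bounding. Then $A_\alpha=\{n:|n|\le\alpha\}$ is an infinite $\Delta^1_1$ subset of $A$.

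Either route gives $B$. The corollary then follows from Corollary \ref{jock sol}, since a uniformly majorreducible subset of $B$ is also a subset of $A$.
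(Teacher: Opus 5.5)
Your proof is correct and is essentially the paper's: Corollary \ref{majorpi11} gives that $A$ is $\Pi^1_1$, you then find an infinite hyperarithmetic subset, and Corollary \ref{jock sol} finishes. The only difference is that the paper cites Luckham's theorem for the middle step, whereas you sketch it via $\Pi^1_1$ number uniformization; your first route is the cleanest, and the other two are variants resting on the same selection and stage-comparison idea.
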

\begin{proof}
Noting the Luckham's result that every infinite $\Pi_1^1$ set has an infinite hyperarithmetic subset, this is immediate from Corollary \ref{jock sol}. 
\end{proof}
We will see that retraceability is quite distinct from uniform majorreducibility. Nevertheless, we show that under some conditions, they coincide.
\section{Regressive and retraceable sets}
We focus on regressive and retraceable sets on this section. We show that every co-immune regressive set with a total function witnessing the regressiveness is uniformly majorreducible, and that the assumption of having such total function is necessary.
\begin{deft}
A partial function $\phii_e$ is a \emph{regressing function} for a regressive set $A=\{a_0,a_1,\dots\}$ if $\phii_e(a_{n+1})=a_n$ for all $n\geq 0$ and $\phii_e(a_0)=a_0$. Further, $\phii_e$ is a \emph{retracing function} for $A$ if $A$ is retraceable.
\end{deft}
\begin{deft}
Let $A=\{a_0,a_1,\dots\}$ be a regressive/retraceable set witnessed by $f$. We say $f$ is \emph{special} if $a_0\in \hat{f}\footnote{Throughout this paper, for a function $f$, $\hat{f}$ refers to this set.}(a):=\{x:\exists n \hs f^n(a)=x\}$ for all $a\in\dom(f)$, where $f^n$ denotes $f$ applied $n$ times.
\end{deft}
\begin{lem}\label{specialfunction}
Every regressive/retraceable set has a special regressing/retracing function.
\end{lem}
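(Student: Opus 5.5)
Given a regressing (or retracing) function $\phii=\phii_e$ for $A=\{a_0,a_1,\dots\}$, we restrict its domain to the points whose $\phii$-orbit actually reaches $a_0$. Define the partial function $f$ by
$$f(x)=\begin{cases}\phii(x) & \text{if } a_0\in\hat{\phii}(x),\\ \uparrow & \text{otherwise.}\end{cases}$$
Since $a_0$ is a single fixed number, the condition $a_0\in\hat{\phii}(x)$, i.e.\ $\exists n\ (\phii^n(x)\!\downarrow=a_0)$, is $\Sigma^0_1$. So $f$ is partial computable. To compute $f(x)$, search for $n$ and a convergent computation $\phii(x),\phii^2(x),\dots,\phii^n(x)$ with $\phii^n(x)=a_0$, and output $\phii(x)$ once one is found.

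Next we check that $f$ is still a regressing function for $A$. For each $k$, iterating $\phii$ from $a_k$ gives $a_{k-1},\dots,a_0$, so $a_0\in\hat{\phii}(a_k)$. Hence $a_k\in\dom(f)$, with $f(a_{k+1})=a_k$ and $f(a_0)=a_0$. The enumeration $a_0,a_1,\dots$ is unchanged, so if $A$ was retraceable (increasing enumeration), $f$ is a retracing function as well.

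Finally, $f$ is special. Let $x\in\dom(f)$, and take $n$ with $\phii^n(x)=a_0$. The points $x,\phii(x),\dots,\phii^{n-1}(x)$ all lie in $\dom(f)$. Indeed, each $\phii^j(x)$ with $j\le n$ reaches $a_0$ after $n-j$ further steps, and $a_0$ itself is in $\dom(f)$. So $f$ agrees with $\phii$ along this orbit, giving $f^n(x)=\phii^n(x)=a_0$ and hence $a_0\in\hat f(x)$.

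The only delicate point is the closure of $\dom(f)$ along orbits. If it failed, $f^n(x)$ might diverge even though $\phii^n(x)=a_0$. The observation that every point on an orbit reaching $a_0$ itself reaches $a_0$ settles this, so no real obstacle is expected.
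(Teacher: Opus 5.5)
Your proof is correct and is the same argument as the paper's, which likewise restricts the given regressing function to the $\Sigma^0_1$ set of points $n$ with $a_0\in\hat{f}(n)$. You also supply details the paper leaves implicit: partial computability, that the enumeration (and hence retraceability) is unchanged, and that the new domain is closed along orbits, so that $f^n(x)=a_0$ actually converges.
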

\begin{proof}
If $A=\{a_0,a_1,\dots\}$ is regressed by $f$, then we can define a partial computable function $g$ so that $g$ halts on $n$ if and only if $a_0\in \hat{f}(n)$.
\end{proof}
So from now on, unless otherwise specified, we will assume that every regressing/retracing function is special.
\begin{deft}
For a regressive/retraceable set $A=\{a_0,a_1,\dots\}$ witnessed by $f$, a c.e.\ tree $T_{f,A}\subseteq \omega^{<\omega}$ is a \emph{regressing/retracing tree} if $(b_0,b_1,\dots,b_n)\in T_{f,A}$ if and only if $f(b_n)=b_{n-1},\dots,f(b_1)=b_0$ and $b_0=a_0$. We say $b\in\omega$ is \emph{on} the tree $T_{f,A}$ if there is some node $\sigma\in T_{f,A}$ such that $\sigma(i)=b$ for some $i<|\sigma|$.
\end{deft}

Note that every regressive/retraceable set has a regressing/retracing tree.
\subsection{Co-immune regressive/retraceable sets}
If a regressive set has a c.e.\ subset, then it is c.e., and likewise, if a retraceable set has a c.e.\ subset, then it is computable. In this sense, immune regressive or immune retraceable sets are the interesting ones. On the other hand, it turns out that regressive/retraceable sets are easier to analyze if their complements have no c.e.\ subset.
\begin{prop}\label{regressingtree}
If $A$ is co-immune regressive/retraceable, then its regressing/retracing tree is a finitely branching tree with a unique infinite path, which is $A$ itself.
\end{prop}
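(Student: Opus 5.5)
The plan is to analyze the regressing/retracing tree $T=T_{f,A}$, with $f$ special as arranged after Lemma \ref{specialfunction}, by looking at the set $N$ of numbers that lie on $T$. The key observation is that $N$ is c.e., since $T$ is a c.e.\ tree. Because $f$ is special, every $b\in\dom(f)$ has $a_0\in\hat f(b)$, so $b$ lies on $T$ via the node $(a_0,\dots,f^2(b),f(b),b)$. Conversely, every number on $T$ lies in $\dom(f)\cup\{a_0\}$. Since $A\subseteq N$, the set $N\setminus A$ is a c.e.\ set minus $A$.

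First I will show that $A$ itself is an infinite path of $T$. For each $n$, the node $(a_0,a_1,\dots,a_n)$ lies in $T$, because $f(a_{k+1})=a_k$. Call a node of $T$ \emph{good} if all its entries lie in $A$. Since the entries of a node are determined backwards by $f$ from the last entry, and the node starts at $a_0$, a node whose last entry is $a_n$ must be exactly $(a_0,\dots,a_n)$. This needs a little care in the degenerate case $f(a_0)=a_0$, where an entry $a_0$ could repeat at the start. Either I treat nodes as requiring the entries after $b_0$ to be distinct from $a_0$, or I simply note that repeated $a_0$ prefixes still give only the path through $A$. Hence the good nodes form exactly the path $A$.

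Next, I take any node $\sigma\in T$ that is not good. Let $i$ be least with $\sigma(i)\notin A$. I claim the subtree of $T$ above $\sigma\restriction(i+1)$ is finite. Every number $b$ on that subtree satisfies $f^k(b)=\sigma(i)$ for some $k$, so the set
$$S=\{b:\exists k\ f^k(b)=\sigma(i)\}$$
is c.e. Moreover, $S$ is disjoint from $A$. If some $a_m\in S$, then $\sigma(i)=f^k(a_m)=a_{m-k}\in A$, where indices below $0$ stay at $a_0$; this contradicts the choice of $i$. By co-immunity of $A$, the set $S$ is finite. Since each node above $\sigma\restriction(i+1)$ is determined by its last entry, which lies in $S$, only finitely many nodes lie above $\sigma\restriction(i+1)$. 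Consequently no infinite path leaves $A$, so $A$ is the unique infinite path.

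For finite branching, I consider a node $\tau$ with last entry $c$; its immediate successors are the nodes $\tau^\frown b$ with $f(b)=c$. If $c\notin A$, the set of such $b$ lies in the finite set $S$ built from $c$, so there are finitely many successors. If $c=a_n\in A$, then among the successors only $a_{n+1}$ (and possibly $a_0$ in the degenerate case) is in $A$. The set
$$\{b:f(b)=a_n\}\setminus\{a_{n+1},a_0\}$$
is c.e.\ and disjoint from $A$, by the injectivity of the listing of $A$, so it is finite by co-immunity. Hence the tree is finitely branching.

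The main obstacle is the bookkeeping around $a_0$, namely the fixed point $f(a_0)=a_0$. It makes the paths $(a_0,a_0,\dots)$ potentially appear in $T$, so I will either adopt the convention above or verify that they add nothing beyond $A$. For retraceable sets, the same argument applies verbatim.
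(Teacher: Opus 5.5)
Your proof is correct and follows the paper's argument. Co-immunity applied to the c.e.\ set of immediate successors gives finite branching, and co-immunity applied to the c.e.\ set of descendants of the first entry outside $A$ shows that no node leaving $A$ has infinitely many extensions. Your extra care about the fixed point $f(a_0)=a_0$ is warranted, since the paper's argument glosses over it. However, only your first option (forbidding $a_0$ after the root) actually works: otherwise the constant sequence $(a_0,a_0,\dots)$ is a genuine second infinite path of $T_{f,A}$, not merely a harmless repetition.
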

\begin{proof}
If $T_{f,A}$ has a node $\sigma$ with infinitely many successor nodes $\sigma^\frown b_0,\sigma^\frown b_1,\dots\in T_{f,A}$, then at most one of $b_0,b_1,\dots$ is a member of $A$. Since we can computably enumerate the $b_i$'s, there should not be infinitely many $b_i$'s by the co-immunity. Likewise, if there is a node $\sigma\in T_{f,A}$ such that has infinitely many nodes $\tau\in T_{f,A}$ with $\tau\succ\sigma$, then by the co-immunity, it must be that $A\succ\sigma$. In particular, if $B\in [T_{f,A}]$, then $B=A$.
\end{proof}
\begin{prop}[McLaughlin]\label{0'comp}
If $A=\{a_0,a_1,\dots\}$ is a co-immune regressive set, then $A$ is $0'$-computable.
\end{prop}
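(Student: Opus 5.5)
We will use Proposition \ref{regressingtree}. Fix a special regressing function $f$ for $A$ and let $T=T_{f,A}$ be the regressing tree. By Proposition \ref{regressingtree}, $T$ is a c.e.\ finitely branching tree whose unique infinite path is the sequence $(a_0,a_1,a_2,\dots)$. The goal is to compute this path from $0'$; then $n\in A$ iff $n$ occurs on the path. Here we use that $A$ is infinite and that $\phii$ is injective on $A$, so the $a_i$ are distinct. We must also check that membership can be decided from the path. This is not immediate, since the $a_i$ need not increase. It follows from $f$ being special: any $b$ on $T$ lies at a unique level, namely the number of steps $k$ with $f^k(b)=a_0$. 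We can search for this $k$, and each $a_i$ has level $i$.

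\textbf{Computing the path with $0'$.} Build $a_0,a_1,\dots$ inductively, with $a_0$ given as a finite parameter. Suppose $\sigma=(a_0,\dots,a_n)$ has been found. Since $T$ is finitely branching, $\sigma$ has finitely many children, and exactly one of them, $\sigma^\frown a_{n+1}$, has infinitely many extensions in $T$. The set of extensions of a node is c.e., so ``$\tau$ has an extension of length $m$ in $T$'' is $\Sigma_1$ and decidable by $0'$. The difficulty is that we do not know the full finite set of children in advance, because $T$ is only c.e.

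\textbf{Main obstacle.} The hard step is deciding which child is the right one using only $0'$. Determining the whole child set, or deciding ``has infinitely many extensions'', looks like a $0''$ question. I would work level by level. For each $m>n$, let $S_m$ be the set of nodes of length $m+1$ in $T$ extending $\sigma$. This set is finite, since $T$ is finitely branching, and it is nonempty.

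The key claim I would prove is that, for all large enough $m$, every node in $S_m$ extends $\sigma^\frown a_{n+1}$. Suppose instead that infinitely many levels contained a node through a wrong child $c$. Finitely many wrong children exist, so some fixed wrong child $c$ would have nodes at infinitely many levels. The subtree above $c$ would then be infinite and finitely branching, so by K\"onig's lemma it would have an infinite path. That contradicts the uniqueness of the infinite path.

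This claim still leaves a problem: $0'$ cannot tell when $m$ is ``large enough''. So I would use the co-immune argument of Proposition \ref{regressingtree} more directly. Given a candidate child $c$ of $\sigma$, the set of all entries on nodes extending $\sigma^\frown c$ is c.e. For a wrong $c$ this set is disjoint from $A$ beyond level $n$, so by co-immunity it is finite. Hence ``$\sigma^\frown c$ has an extension of length $m$'' fails for some $m$ whenever $c$ is wrong.

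The $0'$ procedure is then as follows. Enumerate $T$ to find children $c$ of $\sigma$. For each stage $m=n+2,n+3,\dots$, ask $0'$ which of the children found so far have extensions of length $m$. Stop as soon as exactly one found child survives, and that child also has an extension of length $m$. Enumerating $T$ eventually reveals $a_{n+1}$, and every wrong child eventually dies at some level. Therefore the search terminates, and it outputs $a_{n+1}$.

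A child appearing late in the enumeration would have to be correct only if it were the true one. We handle this by continuing the enumeration until the true surviving child has been found. The true child can never die, because it has extensions at every length. So at termination, take the unique child that survives. To guard against a survivor that merely has not died yet, I would use the stronger halting criterion: the survivor must be the only child that survives long enough, up to level $m$, with both $m$ and the set of found children increasing. Verifying that this does not output a wrong child is the delicate point. I expect the verification to go through by re-querying at larger $m$, because a wrong child's death is itself a $0'$-verifiable, eventually-true fact.
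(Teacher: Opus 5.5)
There is a genuine gap, and it sits exactly at the point you flag as ``delicate'': your halting rule is never justified, and as stated it is wrong. Suppose at level $m$ the enumeration of $T$ has produced only a wrong child $c$ of $\sigma=(a_0,\dots,a_n)$, its subtree still reaches level $m$, and $\sigma^\frown a_{n+1}$ has not yet appeared. Then your procedure halts and outputs $c$. Re-querying at larger $m$, or demanding that the survivor ``survive long enough'', does not help. At no finite stage do you have evidence that no further child remains to be enumerated, so a wrong child that has not yet died looks exactly like the true one. What your procedure actually produces is a sequence of $0'$-computable guesses converging to $a_{n+1}$. That is a $0''$-computation of $A$, which is the very obstacle you identified at the outset.

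The missing idea is that your premise ``we do not know the full finite set of children in advance'' is false relative to $0'$. The children of $\sigma$ are the $x$ with $f(x)=a_n$. This set is c.e., and finite by co-immunity. The question ``is there a child other than $c_1,\dots,c_k$?'' is $\Sigma^0_1$, so $0'$ can keep asking it and collecting children until the answer is no, which yields the complete list. This is what the paper does. With the complete list in hand, your level-by-level elimination is correct. Each wrong child dies at some finite level, by your K\"onig/co-immunity argument, and the true child never dies. So the first $m$ at which exactly one listed child has an extension of length $m$ identifies $a_{n+1}$.

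Alternatively, your own key claim already gives a correct halting test. The statement ``every node of length $m$ extending $\sigma$ extends $\sigma^\frown c$'' is $\Pi^0_1$, hence decidable by $0'$. A yes answer certifies $c=a_{n+1}$, because $\sigma^\frown a_{n+1}$ has descendants at every level. Your claim guarantees that such a pair $(c,m)$ exists, so the search terminates.

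A smaller point concerns your membership test. First ask $0'$ whether $b\in\dom(f)$, since otherwise the search for $k$ need not halt. Then take the least such $k$, because $f(a_0)=a_0$ makes $k$ non-unique.
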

\begin{proof}
Let $f$ regress $A$ and let $T_{f,A}$ be its regressing tree. We first show that $0'$ can list the $a_i$'s. Having $a_n$, let $\sigma=(a_0,\dots,a_n)$. Due to the co-immunity, there are only finitely many numbers $x\in\omega$ such that $f(x)=a_n$. Hence, $0'$ can list all numbers $x\in\omega$ such that $f(x)=a_n$ by keep on asking ``are there more numbers such that $f(x)=a_n$?'' and listing such $x$ until the answer is ``no''. Among the numbers in the list, $0'$ can eliminate all $x$ such that $\sigma^\frown x\in T_{f,A}$ has only finitely many nodes extending it on the tree, until precisely one number is left in the list.

Say the remaining number is $x_0$. As in the proof of Proposition \ref{regressingtree}, we have $x=a_{n+1}$ if and only if $\sigma^\frown x$ has infinitely many nodes extending it on $T_{f,A}$. And it must be that $x_0=a_{n+1}$ because the list has to contain a number which equals to $a_{n+1}$.

To list elements of $\ol{A}$, enumerate $a$ into $\ol{A}$ if and only if either $a$ is not on the tree $T_{f,A}$ or there is a node $\sigma\in T_{f,A}$ which has only finitely many nodes extending it on the tree and $\sigma(|\sigma|-1)=a$.
\end{proof}
It is shown in \cite[Corollary 3.1]{appel} that not every regressive set has a total regressing function,\footnote{A total regressing function is a regressing function which is also total.} by showing that no simple set can possess a total regressing function---note that every c.e.\ set is regressive as c.e.\ sets have a computable enumeration $a_0,a_1,\dots$. Thus, it is natural to ask if this also happens for retraceable sets. The answer is positive, and we can further require the set to be co-immune.
\begin{prop}\label{co-immune req}
There exists a co-immune retraceable set $\leq_T 0'$ that has no total retracing function.
\end{prop}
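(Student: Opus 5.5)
The plan is a finite-injury priority construction with a computable construction and a $0'$-computable limit. We build a partial computable function $\psi$ stage by stage, defining each value $\psi(x)$ at most once. Along the way we maintain a finite increasing approximation $a_0<a_1<\dots<a_k$ to $A$, with $\psi(a_0)=a_0$ and $\psi(a_{i+1})=a_i$. The final $A$ is the limit of these approximations, so $A$ is retraced by $\psi$ (made special by Lemma \ref{specialfunction} if desired).

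The requirements, in order of priority, are the following.
\begin{itemize}
\item $P_e$: if $W_e$ is infinite, then $W_e\cap A\neq\emptyset$. This gives co-immunity of $A$: an infinite c.e.\ subset of $\ol{A}$ would be some $W_e$ disjoint from $A$.
\item $R_e$: if $\phii_e$ is total, then $\phii_e$ is not a retracing function for $A$.
\end{itemize}
Each requirement owns an initial segment of the approximation. When it acts, it truncates the approximation above its segment, which injures all lower-priority requirements; these are reinitialized with fresh, larger numbers. Abandoned numbers stay in the domain of $\psi$ but are never used again, so they form finite dead branches of the retracing tree. This is consistent with Proposition \ref{regressingtree}.

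\textbf{Strategy for $P_e$.} Let $a_m$ be the current last element of its segment. $P_e$ waits for $W_e$ to enumerate some $x$ larger than every number used so far. It then sets $\psi(x)=a_m$, truncates the approximation to $a_0,\dots,a_m,x$, and is permanently satisfied unless it is later injured.

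\textbf{Strategy for $R_e$.} Let $a_n$ be the last element of its segment.
\begin{enumerate}
\item Choose fresh $x_1<x_2<y$ and set $\psi(x_1)=\psi(x_2)=a_n$. Leave $\psi(y)$ undefined.
\item While waiting, let the approximation continue as $a_0,\dots,a_n,x_1$, and extend it by lower-priority action using fresh numbers larger than $y$. In particular, $y$ is not in the approximation.
\item If $\phii_e(y)$ ever converges, act. If $\phii_e(y)=x_1$, set $\psi(y)=x_2$ and truncate the approximation to $a_0,\dots,a_n,x_2,y$. Otherwise set $\psi(y)=x_1$ and truncate it to $a_0,\dots,a_n,x_1,y$.
\end{enumerate}
In either case $y\in A$, but $\phii_e(y)$ is not the predecessor of $y$ in $A$, so $\phii_e$ does not retrace $A$. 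If $\phii_e(y)$ never converges, then $\phii_e$ is not total and $R_e$ is satisfied without acting. Because $\psi(y)$ is defined only once, and only at the moment of action, $\psi$ remains a partial computable function.

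\textbf{Verification.} By induction on priority, each requirement acts only finitely often, so it is injured only finitely often and its segment eventually stabilizes. The approximation therefore converges to an infinite increasing set $A$ retraced by $\psi$, and every requirement is met. $A$ has no total retracing function, since any total one would be some total $\phii_e$, which $R_e$ defeats.

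For $A\leq_T 0'$, we compute the $a_i$ inductively. Suppose we know the stage after which the first $k$ requirements never act. Whether the next requirement will ever act is then a $\Sigma_1$ question about fixed parameters: either ``does $\phii_e(y)$ converge?'' or ``does $W_e$ enumerate a number above the current threshold?''. So $0'$ can answer it and locate the next stabilization stage.

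The main obstacle is the $R_e$ strategy. A naive attempt picks one candidate successor $x$ and waits for $\phii_e(x)$; this fails, because $\phii_e(x)=a_n$ could hold for every candidate. The two-alternative trick with $x_1,x_2$ and the delayed definition of $\psi(y)$ resolves this. The cost is that $A$ must follow a provisional branch through $x_1$ while we wait, and may later jump to $y$. This jump is what makes $A$ only $0'$-computable rather than computable. The remaining checks are that the jump is a finite injury and that the abandoned branches keep the retracing tree consistent with co-immunity.
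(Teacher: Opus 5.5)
Your $R_e$ strategy is sound; the $x_1,x_2,y$ device does the same job as the paper's witness pair $w_{e,s},w_{e,s}-1$. The co-immunity half, however, has a genuine gap.

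Your $P_e$ only ever acts on an $x\in W_e$ that is larger than every number used so far. You also stipulate that abandoned numbers are never used again. Together these already make $\ol{A}$ non-immune. Let $D$ be the set of numbers that some $R_e$ kills when it acts: $x_1$ if $\phii_e(y)=x_1$, and $x_2$ otherwise.
\begin{itemize}
\item $D$ is c.e., because the construction is computable.
\item $D\cap A=\emptyset$. By your policy a killed $x_1$ never returns to the approximation. A killed $x_2$ never even receives a $\psi$-preimage, since your strategies only attach new numbers to elements of the current approximation.
\item $D$ is infinite, since every $R_e$ with $\phii_e$ total acts after its last initialization.
\end{itemize}
Choose an index $e_0$ with $W_{e_0}=D$ that enumerates each $z\in D$ only after the construction has killed $z$; this is possible because the construction is computable. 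Every such $z$ was already used when it appears, so $P_{e_0}$ never acts, and $W_{e_0}$ is an infinite c.e.\ subset of $\ol{A}$.

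Suppose you instead meant ``larger than everything used when $P_e$ was last initialized''. Then the $x$ you find may already lie in $\dom(\psi)$, for example an element of $D$, and setting $\psi(x)=a_m$ is not allowed.

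Proposition \ref{regressingtree} only says that dead branches are finite. Co-immunity additionally forbids the dead part of the tree from containing an infinite c.e.\ set. Numbers declared dead at a computable stage and never revived form exactly such a set.

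The missing ingredient is case (2) of the paper's $Q_e$. The co-immunity requirement must be allowed to act on an $x\in W_e$ that is \emph{already} in the domain of the retracing function, typically on an abandoned branch. It does so by redirecting $A$ along the existing chain $\hat{\psi}(x)$. This is allowed provided that chain passes through everything the higher-priority requirements currently protect, which is the paper's condition that $x$ ``respects priorities up to $P_e$''. The verification then uses the following observation. Once the higher-priority requirements have stopped acting, every number entering $\dom(\psi)$ afterwards has its $\psi$-chain running through the protected segment. Hence all but finitely many elements of an infinite $W_e$ qualify.

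Dead branches must therefore remain revivable, which is incompatible with your ``never used again'' rule. With this case added, and your $\Sigma_1$ question for $P_e$ adjusted to include it, the rest of your outline can be carried through. Once co-immunity holds, $A\leq_T 0'$ also follows immediately from Proposition \ref{0'comp}, which is how the paper obtains it.
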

\begin{proof}
We build a partial computable function $f$ that will be a special retracing function for a co-immune retraceable set $A$. At the same time, we ensure that $A$ cannot have a total retracing function. The domain of $f$ will be a c.e.\ tree with a unique infinite path, which will be $A$, as in Proposition \ref{regressingtree}. If we succeed in building such $f$ and $A$, then $A\leq_T 0'$ is automatic from Proposition \ref{0'comp}. The idea is to diagonalize against all total computable functions, while assuring that $f$ will still retrace some set. We do this by a finite injury argument with the requirements:
\begin{align*}
P_e&: \text{if }\phii_e\text{ is total, then }\phii_e\text{ is not a retracing function for }A,\\
Q_e&: \text{if }W_e\text{ is infinite, then }A\cap W_e\neq\emptyset.
\end{align*}
At stage $s+1$, each requirement with $e\leq s$ will be \emph{connected} to a number. Let $p_{e,s}$ denote the number that $P_e$ is connected to, and let $q_{e,s}$ denote the number that $Q_e$ is connected to at stage $s+1$. Further, at stage $s+1$, each $P_e$ requirement with $e\leq s$ will hold a witness $w_{e,s}$. And whenever a $Q_e$ requirement acts and becomes satisfied at stage $s$, it will be assigned a witness $x_{e,s}$, which is kept as a witness of $Q_e$ as long as $Q_e$ remains satisfied. The requirements' priorities are $P_0>Q_0>P_1>Q_1>\cdots$.
\vspace{0.3cm}

\noindent\emph{Construction.} At stage $0$, let $w_{0,0}=2$ and $p_{0,0}=0=q_{0,0}$. We set $f_0(0)=0$. At stage $s+1$, search for the highest priority requirement with $e\leq s$ that requires attention and is unsatisfied, then let it act as the below. Otherwise, if there is no such requirement, then go to the last paragraph of the construction.

\emph{How they act.} We say $P_e$ requires attention at stage $s+1$ if $e\leq s$ and $\phii_{e,s}(w_{e,s})\downarrow$. If $P_e$ is the highest priority requirement requiring attention at stage $s+1$, we let $P_e$ act by:
\begin{enumerate}
\item If $\phii_{e,s}(w_{e,s})\neq p_{e,s}$, then set $f_{s+1}(w_{e,s})=p_{e,s}$.
\item If $\phii_{e,s}(w_{e,s})=p_{e,s}$, then set $f_{s+1}(w_{e,s})=w_{e,s}-1$. And set $f_{s+1}(w_{e,s}-1)=p_{e,s}$.
\end{enumerate}
In this case, each lower priority requirement is injured, being declared unsatisfied if it were satisfied. We set $w_{e+1,s+1}$ as the smallest even number larger than any number mentioned so far. Then we set $w_{d,s+1}$ for $e+1<d\leq s+1$ as $w_{e+1,s+1}+2(e+1-d)$. Let each $P_d$ requirement with $e<d\leq s+1$ be connected to $w_{e,s}$ by setting $p_{d,s+1}=w_{e,s}$. For $d\geq e$, $x_{d,s+1}$ is undefined since the lower priority requirements are injured. Set $q_{d,s+1}=w_{e,s}$ for $e\leq d\leq s+1$.

Next, we consider the $Q$ requirements. At stage $s+1$, we say a number $x\in\dom(f_{s})$ \emph{respects priorities up to $P_e$} if
\begin{itemize}
\item $0\in\hat{f}_s(x)$,
\item $\hat{f}_s(x)$ contains $w_{d,s}$ for each $d$ such that $P_d$ is satisfied now and $d\leq e$, and
\item $\hat{f}_s(x)$ contains $x_{d,s}$ for each $d$ such that $Q_d$ is satisfied now and $d<e$.
\end{itemize}
Informally, this means that putting $x\in A$---or believing that $x$ is on the unique infinite path of $T_{f,A}$---does not conflict with the higher priority requirements which are satisfied. We say $Q_e$ requires attention at stage $s+1$ if $e\leq s$ and there is some $m\in W_{e,s}$ such that either
\begin{enumerate}
\item $m\notin \dom(f_s)$ and $m>w_{e,s}$, or
\item $m\in\dom(f_s)$ and $m$ respects priorities up to $P_e$.
\end{enumerate}
Assume $Q_e$ is the highest priority requirement requiring attention at stage $s+1$. In case $(1)$, we let $Q_e$ act by setting $x_{e,s}=m$ and $f_{s+1}(m)=q_{e,s}$. In case $(2)$, we let $Q_e$ act by just setting $x_{e,s}=m$. In either cases, all the lower priority requirements are injured, and we set $w_{e+1,s+1}$ as the \emph{second} smallest even number larger than any number mentioned so far. Then we set $w_{d,s+1}$ for $e+1<d\leq s+1$ as $w_{e+1,s+1}+2(e+1-d)$. Set $q_{d,s+1}=p_{d,s+1}=x_{e,s+1}$ for $e<d\leq s+1$.

Finally, to end stage $s+1$ construction, for each $d\leq s$, if $p_{d,s+1},q_{d,s+1}, w_{d,s+1}$, or $x_{d,s+1}$ (if $x_{d,s}$ is defined) is yet undefined, define them identically to their previous value, e.g.\ $p_{d,s+1}=p_{d,s}$ if $p_{d,s+1}$ is undefined so far. Likewise, for elements of $\dom(f_s)$ that is not yet defined on $f_{s+1}$, define them on $f_{s+1}$ so that $f_{s+1}$ extends $f_s$. If no requirement required attention, define $p_{s+1,s+1}=p_{s,s}$, $q_{s+1,s+1}=q_{s,s}$, and $w_{s+1,s+1}=w_{s,s}+2$.
\vspace{0.3cm}

\noindent\emph{Verification.} It is clear that the function $f=\bigcup_s f_s$ we build is partial computable. To check that the construction is well-defined, we only need to check that $w_{e,s}$ and $w_{e,s}-1$ are not in $\dom(f_s)$ at the beginning of stage $s+1$. It cannot be that $w_{e,s}\in \dom(f_s)$ by an action of a $P$ requirement: if $d>e$ and $P_d$ acts, this only puts elements $>$ $w_{e,s}$ into $\dom(f_s)$, and if $d<e$ and $P_d$ acts, this action sets $w_{e,s}$ to be some even number larger than whatever $P_d$ puts into $\dom(f_s)$. And $w_{e,s}$ cannot be put into $\dom(f_s)$ via a $Q$ requirement either. If $Q_d$ with $d\geq e$ acts according to case $(1)$, then it can only put numbers larger than $w_{e,s}$ into $\dom(f_s)$. If $Q_d$ with $d<e$ acts according to case $(1)$, then the action sets $w_{e,s}$ to be some even number larger than whatever $Q_d$ puts into $\dom(f_s)$. And we do not have to worry about a $Q$ requirement's action according to case $(2)$---say at stage $t<s+1$---since we can assume using an induction that $w_{e,s}\notin \dom(f_t)$; note that the action of a $Q$ requirement according to case $(2)$ does not change the domain of $f$. We can similarly show that $w_{e,s}-1$ is not in $\dom(f_s)$ at the beginning of stage $s+1$.

\emph{The function $f$ retraces a unique infinite set.} We first show that whenever $W_e$ is an infinite c.e.\ set, $Q_e$ becomes satisfied perpetually from some stage. Assume $W_e$ is infinite, and let $s$ be a large enough stage so that no requirement of a higher priority than $Q_e$ acts from then on. Let $N=\max(\{w_{e,s}\}\cup\dom(f_s))$. Then there exists a number $z>N$ such that $z\in W_e\setminus W_{e,s}$. Say $z$ joins $W_e$ at stage $t>s$. If $Q_e$ is already satisfied at stage $t$, then it cannot be injured from then on. Otherwise, $Q_e$ requires attention at stage $t+1$ because of the choices of $N$ and $z$. Indeed, we have $N>w_{e,t}=w_{e,s}$, and any number in the set $\dom(f_t)\setminus\dom(f_s)$ respects priorities up to $P_e$. So $Q_e$ becomes satisfied perpetually from stage $t+1$, and $x_e:=\lim_s x_{e,s}$ exists.

Noting that there are infinitely many infinite c.e.\ sets, there exist infinitely many $e$'s such that $Q_e$ is satisfied perpetually, and hence $x_e$ is defined. Moreover, if $x_e$ and $x_d$ are defined and $e<d$, then $x_e\in\hat{f}(x_e)$. To see this, say $t_e$ and $t_d$ are the least stages such that $Q_e$ and $Q_d$ (resp.) become satisfied perpetually. Since all the lower priority requirements are injured after $Q_e$ acts, it must be that $t_e<t_d$. Hence, at stage $t_d$, no matter if the action was according to case $(1)$ or $(2)$, we have $x_{e}\in\hat{f}_{t_d+1}(x_{d})$. Let $E$ be the set of all indices $e$ such that $W_e$ is an infinite c.e.\ set. Then $A:=\bigcup_{e\in E}\hat{f}(x_e)$ is the unique infinite path of $\dom(f)$ viewed as a tree. The fact that $f$ retraces $A$ can be shown using an induction on stages.

\emph{$A$ is co-immune.} Assume that $W_e\subseteq \ol{A}$ is an infinite c.e.\ set. But as we just showed above, $Q_e$ then becomes perpetually satisfied from some stage. This means that an element of $W_e$ must be an element of $A$, which is a contradiction.

\emph{There is no total retracing function for $A$.} Assume that $\phii_e$ is a total computable function retracing $A$. Let $s$ be a large enough stage such that no requirement of a higher priority than $P_e$ acts from then on. Then note that $w_{e,t}=w_{e,s}$ and $p_{e,t}=p_{e,s}$ for all $t>s$. Let $t>s$ be such that $\phii_{e,t}(w_{e,t})\downarrow$. If $P_e$ is already satisfied at stage $t$, then $P_e$ succeeded in a diagonalization and never gets injured from then on, so we get a contradiction. If $P_e$ is not yet satisfied at stage $t$, then $P_e$ is the highest priority requirement requiring attention at stage $t+1$. So $P_e$ acts at stage $t+1$, again diagonalizing against $\phii_e$ as a contradiction.
\end{proof}
Despite Lemma \ref{specialfunction}, when $A$ is a regressive set which has a total regressing function $f$, this does not necessarily imply that $A$ has a special regressing function which is also total, as making $f$ special may delete some of its elements from the domain---see below Proposition \ref{counterex3}. Nonetheless, the below propositions show that for retraceable sets, having a total retracing function implies having a special total retracing function, and for co-immune regressive sets, having a total regressing function implies having a total regressing function.
\begin{prop}
Let $A$ be a retraceable set. If there exists a total computable function retracing $A$, then there exists a special total computable function which retraces $A$.
\end{prop}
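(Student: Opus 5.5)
The plan is to correct the given total retracing function so that every orbit is forced to reach $a_0$, while leaving its values on $A$ unchanged. Throughout, the smallest element $a_0$ of $A$ is fixed as a constant, which is allowed because a single number can be hard-coded into a computable function. The proof uses essentially only that a retracing function for an \emph{increasing} enumeration strictly decreases along $A$.

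Let $A=\{a_0<a_1<\cdots\}$ be retraced by a total computable $f$, so $f(a_{n+1})=a_n$ and $f(a_0)=a_0$. For an arbitrary $x$, I will call the sequence $x, f(x), f^2(x),\dots$ the \emph{decreasing run} from $x$, and follow it only while it strictly decreases. Since a strictly decreasing sequence in $\omega$ is finite, the run stops after finitely many steps at some $y=f^k(x)$ with $f(y)\geq y$. This stopping point is computable uniformly in $x$ because $f$ is total. Define
$$g(x)=\begin{cases} f(x) & \text{if the decreasing run from } x \text{ stops at } a_0,\\ a_0 & \text{otherwise.}\end{cases}$$
Then $g$ is total and computable.

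First, $g$ still retraces $A$. For $x=a_n$ the decreasing run is $a_n>a_{n-1}>\cdots>a_0$, and it stops at $a_0$ because $f(a_0)=a_0$. Hence $g(a_n)=f(a_n)=a_{n-1}$ for $n\ge 1$, and $g(a_0)=f(a_0)=a_0$.

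Second, $g$ is special, i.e.\ $a_0\in\hat{g}(x)$ for every $x$. Call $x$ \emph{good} if its decreasing run stops at $a_0$. If $x$ is not good, then $g(x)=a_0$ and we are done. If $x$ is good and $x\neq a_0$, then the first step of the run is a genuine decrease, so $f(x)<x$. The decreasing run from $f(x)$ is the tail of the run from $x$, so $f(x)$ is also good, and $g(f(x))=f(f(x))$. By induction along the run, $\hat{g}(x)$ coincides with the decreasing run from $x$, which ends at $a_0$. Equivalently, one can argue by strong induction on $x$: either $g(x)=a_0$, or $g(x)=f(x)<x$, or $x=a_0$.

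The only delicate point is termination and totality. The original $f$ may have cycles, or orbits that wander upward and never reach $a_0$, so the naive test ``does the orbit of $x$ reach $a_0$?'' is only c.e., not decidable. Restricting attention to the strictly decreasing portion of the orbit is exactly what makes the test decidable. It costs nothing on $A$, because $f$ decreases along $A$ by retraceability; this is where the increasing enumeration, rather than mere regressiveness, is used.
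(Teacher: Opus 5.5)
Your proof is correct and follows essentially the paper's idea: keep $f$ where it strictly decreases, redirect everything else to $a_0$, and get specialness by strong induction on $x$. The paper uses the simpler one-step test ($g(x)=f(x)$ if $f(x)<x$, otherwise $g(x)=0$, after assuming without loss of generality that $a_0=0$). Your closing strong-induction remark already shows this local test suffices, so checking that the whole decreasing run ends at $a_0$ is harmless but unnecessary.
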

\begin{proof}
Let $f$ be a total computable function which retraces $A$. Without loss of generality, assume that $0\in A$, so $0\in\hat{f}(a)$ for all $a\in A$. We define a total computable function $g$ as follows: Let $g(0):=0$. Assume we have defined $g(0),\dots,g(n)$. If $f(n+1)\geq n+1$, then we let $g(n+1):=0$. Otherwise, if $f(n+1)< n+1$, then we let $g(n+1):=f(n+1)$.

Clearly, $g$ is a total computable function. And $g$ retraces $A$ as $g(m)=f(m)$ for all $m\in A$. Lastly, $g$ is special because for $n\notin A$, there must exist some $m\in\omega$ such that $f^{m+1}(n)\geq f^{m}(n)$. Let $m^*$ be the least such $m$. Then $g(f^{m^*}(n))=0=g^{m^*+1}(n)$, so that $0\in \hat{g}(n)$.
\end{proof}
\begin{prop}\label{totaltree}
If $A=\{a_0,a_1,\dots\}$ is a co-immune regressive set which has a total regressing function (here, the regressing function is not assumed to be special), then $A$ has a special total regressing function.
\end{prop}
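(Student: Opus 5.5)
Let $f$ be a total computable function regressing $A$, so $f(a_{n+1})=a_n$ and $f(a_0)=a_0$. I will build a total computable $g$ that agrees with $f$ on $A$ and satisfies $a_0\in\hat g(x)$ for every $x$. The plan is to follow each $f$-orbit and redirect it to $a_0$ as soon as it becomes clear the orbit is not heading to $a_0$.

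The first step is to pin down where an $f$-orbit can go. For any $x$, the sequence $x, f(x), f^2(x),\dots$ either reaches $a_0$, or is eventually periodic with a cycle avoiding $a_0$, or is injective and infinite. The key claim is that co-immunity excludes the third case, and in fact excludes infinite injective orbits avoiding $A$ altogether. Suppose the orbit of $x$ is infinite and injective. If it meets $A$, say $f^k(x)=a_n$, then it continues $a_{n-1},\dots,a_0$ and so reaches $a_0$. Otherwise $\{f^k(x):k\in\omega\}$ is an infinite c.e.\ subset of $\ol A$ (computably enumerable since $f$ is total computable), which contradicts co-immunity. Hence every orbit either contains $a_0$ or is eventually periodic, and a periodic orbit never meets $A$ except through the fixed point $a_0$. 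This uses the fact that $a_i$ regresses down to $a_0$, so the only cycle meeting $A$ is $\{a_0\}$.

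Next I define $g$. Set $g(a_0)=a_0$ (hardcoding $a_0$). For $x\neq a_0$, compute $f(x)$, and let $g(x)=a_0$ if $f(x)\in\{x\}\cup\{$earlier points on the orbit$\}$; this needs care, so I will use a cleaner rule. Let $g(x)=a_0$ if the orbit of $x$ returns to $x$, meaning $x$ lies on a cycle. Otherwise let $g(x)=f(x)$. Whether $x$ lies on a cycle is not decidable in general, so the rule has to be modified. I will instead use a stagewise rule: $g(x)=a_0$ iff there is some $k\le x$ with $f^k(x)=x$, $k\ge1$, and $x\ne a_0$; otherwise $g(x)=f(x)$. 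This is decidable. The real condition I need is that elements of $A$ are never cycled. Since $A$'s orbits go straight down to $a_0$ and never return, $g=f$ on $A$, so $g$ still regresses $A$.

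The remaining step, which I expect to be the main obstacle, is speciality: every $g$-orbit must reach $a_0$. Take an $f$-orbit ending in a cycle $C$ not containing $a_0$. The bound $k\le x$ is meant to catch some element of $C$. Choose $x=\max C$; then $|C|\le x+1$ and $f^{|C|}(x)=x$, so $g(x)=a_0$ provided $|C|\le x$. The only exception is the case $C=\{0,\dots\}$ with $|C|=x+1$, i.e.\ the cycle contains all of $0,\dots,x$; I will fix this by using the bound $k\le x+1$. So every cycle $C$ has at least one point redirected to $a_0$. The $g$-orbit of any $y$ then follows $f$ until it either hits $a_0$ or first hits a redirected point, after which it goes to $a_0$. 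Since the $f$-orbit of $y$ is either through $a_0$ or enters a cycle containing a redirected point, $g$ is special. It is total and computable, and it agrees with $f$ on $A$, which gives the proposition.
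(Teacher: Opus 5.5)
Your proof is correct once the rule is fixed with the bound $1\le k\le x+1$, but it takes a different route from the paper's.

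\textbf{The paper's route.} The paper uses co-immunity twice. First, as you do, it shows that any $b$ with $a_0\notin\hat{f}(b)$ has a finite orbit $\hat{f}(b)\subseteq\overline{A}$, which therefore ends in a cycle. Second, it notes that the set of all such $b$ is c.e.: compute the finite orbit and check whether it contains $a_0$. This set is contained in $\overline{A}$, so it is finite. The special total function is then obtained by sending these finitely many points directly to $a_0$.

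\textbf{Your route.} You use co-immunity only once, to exclude infinite orbits avoiding $a_0$. You then break every cycle not through $a_0$ with a decidable local test: send $x\neq a_0$ to $a_0$ whenever $f^k(x)=x$ for some $1\le k\le x+1$. This always catches the maximum of each such cycle. It never fires on $A$, because the orbit of $a_n$ descends injectively to the fixed point $a_0$.

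\textbf{Comparison.} Your approach gives an explicit $g$, defined uniformly from $f$ and $a_0$. It does not need to know, or even assume, that there are only finitely many bad points. It works for any total regressing $f$ whose orbits avoiding $a_0$ are all finite. The paper's argument is shorter and shows directly that $g$ is a finite modification of $f$. However, it is non-uniform, since it requires knowing the finite set of bad points.

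\textbf{Presentation.} In a written version, state the final rule with bound $x+1$ directly, and drop the abandoned drafts.
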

\begin{proof}
Let $f$ be a (not necessarily special) total regressing function of $A$, and consider its regressing tree $T_{f,A}$. For elements $b$ not on some node of the tree (i.e.\ the elements $b$ such that $a_0\notin \hat{f}(b)$), if $\hat{f}(b)$ is infinite, then we get an infinite c.e.\ subset of $\ol{A}$. So $\hat{f}(b)$ is finite for all such $b$. The only possible situation for such $b$ is that $\hat{f}(b)$ contains some $m$-cycle, meaning that $\hat{f}(b)$ contains an element $c$ such that $f^m(c)=c$ for some $m$.

Moreover, we can computably enumerate the elements $b$ such that $\hat{f}(b)$ contains some $m$-cycle and $a_0\notin \hat{f}(b)$. So again by the co-immunity, there are only finitely many such elements. So there are only finitely many $b$'s such that $a_0\notin \hat{f}(b)$. Hence, $\dom(f)=\omega$ has only finitely many elements not on some node of the tree $T_{f,A}$. So by connecting those finitely many elements not on $T_{f,A}$ directly to $a_0$, we may take a partial computable function $g$ which is a special total function, so that every element of $\omega$ appears on some node of the tree $T_{g,A}$.
\end{proof}
\begin{prop}\label{counterex3}
There exists a regressive set $A$ that has a total regressing function, but does not have a special total regressing function.
\end{prop}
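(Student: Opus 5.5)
The plan is a priority construction. We build simultaneously a total computable function $f$ and a set $A=\{a_0,a_1,\dots\}$ regressed by $f$, and we diagonalize against every $\phii_e$ that could be a special total regressing function for $A$. Proposition \ref{totaltree} already tells us that $A$ cannot be co-immune. So we deliberately reserve an infinite computable set of ``junk'' numbers outside $A$ on which $f$ may do whatever is convenient, for instance sit on fixed points or short cycles. This is exactly the freedom a non-special total function has and a special one lacks.

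\textbf{Invariant from a special total $g$.} Suppose $g$ is a special total regressing function for $A$. Then every $n\in\omega$ reaches $a_0$ under iteration of $g$, so the height $h_g(n)=\min\{k:g^k(n)=a_0\}$ is a total computable function. Moreover $a_k$ is the unique element of $A$ with $h_g(a_k)=k$ lying on the $g$-path to $a_0$, since $g(a_{k+1})=a_k$. Thus each $\phii_e$ that is total and special commits, at some finite stage, to a height and a parent for every number. The requirement $R_e$ is: if $\phii_e$ is total and special, then it does not regress $A$.

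\textbf{Strategy for $R_e$.} Suppose $a_0,\dots,a_n$ are the current committed initial segment of $A$.
\begin{enumerate}
\item Choose fresh witnesses and define $f$ on them immediately, so that $f$ stays total. For example, take $w$ and $u$ with $f(u)=a_n$ and $f(w)=u$, and a further witness $w'$ with $f(w')=a_n$.
\item Wait for $\phii_e$ to converge on the witnesses and on their $\phii_e$-orbits down to $a_0$.
\item Choose which witness path to continue $A$ along, so that the $\phii_e$-parent or $\phii_e$-height of the chosen next element disagrees with its position in $A$. Witnesses that are not chosen are left outside $A$, with their already-defined $f$-values. Those values are harmless because $f$ need not be special.
\item Lower-priority requirements then restart above the new end of $A$.
\end{enumerate}
If $\phii_e$ never converges, $R_e$ waits forever while $A$ is extended by other means. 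This needs care: the extension of $A$ must not depend on $R_e$, so a lower-priority default step extends $A$ along one of the witnesses whenever nothing acts. Each $R_e$ acts finitely often, so this is a finite-injury argument, and at the end $f$ is total and regresses $A$.

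\textbf{Main obstacle.} The hard step is the one just sketched: finding a witness configuration on which every possible commitment of $\phii_e$ can be refuted. The constraint is that $f$ must be defined on the witnesses \emph{before} $\phii_e$ reveals its values, since totality forbids waiting. The bad case is when $\phii_e$ copies $f$ on all the offered witnesses, e.g.\ $\phii_e(w')=a_n$, $\phii_e(w)=u$, $\phii_e(u)=a_n$; then no choice of continuation is refuted locally. My first attempt at this case is to exploit specialness on the junk. Put some witnesses into an $f$-cycle, say $f(c)=c'$ and $f(c')=c$, kept out of $A$ unless $\phii_e$ acts. Specialness forces $\phii_e$ to route $c$ and $c'$ to $a_0$, so at least one of them receives a finite $\phii_e$-height $k$ and a $\phii_e$-parent. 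Once that height is revealed we can promote that cycle element into $A$ at a position incompatible with its $\phii_e$-height. This is possible provided the cycle is attached to $A$ only at a point we still control, which means keeping $A$ uncommitted beyond level $k$ while waiting, and that again conflicts with totality of $f$. Getting this interplay right between cycles, commitment of $A$, and totality of $f$ is the crux, and is where I expect the real work of the proof to lie.
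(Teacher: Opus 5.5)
Your proposal has a genuine gap: it stops at the crux and does not get past it. With your witness configuration ($f(u)=a_n$, $f(w)=u$, $f(w')=a_n$), totality forces you to fix, before $\phii_e$ has said anything, the only level at which each witness could ever enter $A$ (namely $n+1$, $n+2$, $n+1$). So a $\phii_e$ that agrees with $f$ there is never refuted. The cycle device cannot repair this. If $f(c)=c'$ and $f(c')=c$ with $c\neq c'$, then $\hat{f}(c)=\{c,c'\}$ does not contain $a_0$ (a fixed point of $f$), while every element of $A$ reaches $a_0$ under $f$. So once $f$ is defined on $c,c'$ this way, neither can ever be promoted into $A$, whatever height $\phii_e$ assigns them. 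The underlying problem is that every witness you use has its $f$-orbit closed off, into $A$ or into a cycle, before the opponent commits.

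The missing idea is to let a witness point \emph{upward} into fresh numbers. The paper's construction uses the same device, setting $f(s+1)=S$ for a fresh $S$. Keep an open chain $x_0\mapsto x_1\mapsto\cdots\mapsto x_m$, adding one fresh element per stage. This keeps $f$ total but leaves the eventual level of $x_0$ undetermined. If $\phii_e$ is total and special, you eventually see the least $k$ with $\phii_e^k(x_0)=a_0$. Then set $f(x_m)=a_N$ for the current last element $a_N$ and append $x_m,\dots,x_0$ to $A$, first lengthening the chain by one if $N+m+1=k$. Your own height invariant then refutes $\phii_e$. Each requirement acts at most once and only appends to $A$, and a default step adding one fresh element per stage keeps $A$ infinite, so there is no injury at all.

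The paper argues more structurally. A c.e.\ set with a special total regressing function $g$ is computable: compute the $g$-height $j$ of $n$, enumerate $A$ until its unique element of height $j$ appears, and compare it with $n$. (The paper's displayed criterion ``$g^k(m)=g^k(n)$ for some $k$'' should be read in this equal-height form.) So it suffices to give a total computable $f$ regressing the noncomputable c.e.\ set $0'$. The paper builds this with the padding lemma, so that each upward chain lies entirely inside or entirely outside $0'$, and a chain is reversed into $A$ once one of its members is seen to enter $0'$.
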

\begin{proof}
We claim that $A=0'$ is such a regressive set. For this, we build a total computable function $f=\bigcup_s f_s$ by stages that will witness that $A$ is a regressive set. In the construction, at each stage $s+1$, if we are unsure whether $\phii_s(s)$ will converge or not, we first let $s$ be mapped to some appropriate index $S\in\omega$ such that $s\in 0'$ if and only if $S\in 0'$. If later, we know that $\phii_s(s)$ converges, that shall mean that $s$, together with all the numbers connected to $s$ via $f$, should be in $0'$. So in that case, we let them be in $A$ by appropriately defining them as some $a_i$'s. 
\vspace{0.3cm}

\noindent\emph{Construction.} Without loss of generality, assume that $0\in 0'$. We set $f_0=\{(0,0)\}$, $a_0=0$, and $A_0=\{a_0\}$. At stage $s+1$, assume that $\{0,\dots,s\}\subseteq \dom(f_s)$, and $a_0,\dots,a_m$ are defined for some $m\geq s$; so we have $A_s=\{a_0,\dots,a_m\}$ so far.

\emph{Step 1.} If $s+1$ is already in $\dom(f_s)$, then go to \emph{Step 2.} Otherwise, if $s+1\notin \dom(f_s)$, let $S$ be a large number such that
\begin{enumerate}
\item $S$ has not been mentioned so far and
\item $\phii_S$ is such that for all $n\in\omega$, $\phii_S(n)$ converges if and only if $\phii_{s+1}(s+1)$ converges.
\end{enumerate}
Define $f_{s}(s+1):=S$, so that $s+1$ is in $\dom(f_s)$ now. Go to \emph{Step 2.} below and follow the instruction.

\emph{Step 2.} Now that $s+1\in \dom(f_s)$, search for a large number $N$ that has not been mentioned so far and $\phii_N(N)\downarrow$. Set $a_{m+1}=N$. Further, for each number $n\in\dom(f_s)$ such that $n\notin A_s$, check if $\phii_{n,s}(n)\downarrow$. If there is any such $n$ with $\phii_{n,s}(n)\downarrow$, for each such $n$, follow the below procedure:

Let $n^*$ be the number such that
\begin{itemize}
\item $n^*\notin\dom(f_s)$ and
\item $n^*\in \hat{f_s}(n)$
\end{itemize}
For each (applicable) $i$, let $n_i$ be such that $f_s^{i+1}(n_i)=n^*$---e.g.\ if $f_s^2(5)=n^*$, then $n_1=5$. Say $n_i$'s are defined for $i\leq I$, and say we have defined $a_0,\dots,a_M$ for some $M\geq m+1$ so far. Set $a_{M+1}=n^*$ and $a_{M+i+2}=n_i$ for $i\leq I$.

If the above procedure is completed for each such $n$ as above, and if we have defined $a_0,\dots,a_k$ for some $k$ by now, we set $A_{s+1}=\{a_0,\dots,a_k\}$. And define $f_{s+1}$ in the obvious way: $f_{s+1}=f_s\cup\{(a_{m+1},a_m),\dots,(a_k,a_{k-1})\}$.
\vspace{0.3cm}

\emph{Verification.} Clearly, the constructed $f=\bigcup_s f_s$ is a total computable function and $A=\bigcup_s A_s$ is an infinite set. To see that $f$ regresses $0'$, note that it is clear from the construction that $f$ regresses $A$: we know $f(a_1)=a_0,f(a_2)=a_1$, and so on. Hence, it suffices to show that $A=0'$. Again by the construction, $A\subseteq 0'$ is clear. And to see $0'\subseteq A$, let $n\in 0'$. Let $s$ be a large enough stage so that $\phii_{n,s}(n)\downarrow$ and $n\in \dom(f_s)$. Then it must be that $n\in A_{s}$ due to the procedure in \emph{Step 2.}

Lastly, we claim that $A$ cannot have a special total regressing function. For if it did, then $A=0'$ would be computable, which is a contradiction. Indeed, if $g$ is a special total computable function which regresses $0'$, then a number $n$ is not in $0'$ if and only if there exists $m\neq n$ such that $\phii_m(m)\downarrow$ and $g^k(m)=g^k(n)$ for some $k$.
\end{proof}
On the next propositions, we show that under certain conditions, co-immune regressive/retraceable sets are majorreducible. On the other hand, we see later that for introenumerable sets, rather than co-immunity, co-c.e.\ is what we need, to get that it is majorreducible.
\begin{prop}\label{retraceable implies umt}
Let $A=\{a_0,a_1,\dots\}$ be a co-immune retraceable set which has a total retracing function. Then $A$ is uniformly majorreducible.
\end{prop}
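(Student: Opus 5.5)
The plan is to turn the retracing tree into a decision procedure. The oracle $g\ge p_A$ bounds how far we must search at each depth. First, by the proposition showing that a total retracing function can be made special, we may assume the total retracing function $f$ is special. Then every $y\in\omega$ satisfies $a_0\in\hat f(y)$, so every number lies on a node of $T_{f,A}$. Hence the \emph{depth} $d(y)$ is computable uniformly: iterate $f$ on $y$ until reaching $a_0$ and count the steps. In particular $d(a_n)=n$, and the depth-$n$ ancestor of $y$, written $\mathrm{anc}_n(y)=f^{d(y)-n}(y)$ for $n\le d(y)$, is computable.

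By Proposition \ref{regressingtree}, co-immunity makes $T_{f,A}$ finitely branching with $A$ as its unique infinite path. Two consequences follow:
\begin{itemize}
\item[(i)] for each $n$ there are only finitely many numbers of depth $n$;
\item[(ii)] every node not on $A$ has only finitely many extensions in the tree, since its subtree is finitely branching with no infinite path (K\"onig's lemma).
\end{itemize}
So for each $n$ there is an $m_n>n$ such that no number of depth $m_n$ descends from a depth-$n$ node other than $a_n$: take the maximum of the finitely many finite heights given by (i) and (ii).

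The uniform reduction is then as follows. Given the graph of any $g\ge p_A$ and an input $x$, let $n=d(x)$. Search for the least $m>n$ such that the finite set $\{\mathrm{anc}_n(y): y\le g(m),\ d(y)=m\}$ is a singleton. This set is computable from $g$, since it requires only a bounded scan of $y\le g(m)$. Then output ``$x\in A$'' iff $x$ equals that singleton's element.

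For correctness, note that $a_m\le p_A(m)\le g(m)$ and $d(a_m)=m$, with $\mathrm{anc}_n(a_m)=a_n$. So $a_n$ belongs to the set for every $m>n$, and the set is never empty. For $m\ge m_n$ every depth-$m$ number has ancestor $a_n$, so the set equals $\{a_n\}$ and the search halts. Whenever the set is a singleton, its element must therefore be $a_n$. Hence the output is correct, and the procedure is one fixed Turing functional applied to $g$, giving uniform majorreducibility.

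The main obstacle is the termination step: we must know that the off-path depth-$n$ nodes die out at a finite depth, not merely that each has finitely many immediate successors. This needs both halves of Proposition \ref{regressingtree}, namely finite branching and the finiteness of subtrees above nodes off $A$. It also needs totality and specialness of $f$, so that $d$ and $\mathrm{anc}_n$ are genuinely computable and the bounded scan below $g(m)$ sees every number of depth $m$. If specialness could not be assumed, the depth computation would be partial and the scan could diverge. That is why I would invoke the preceding proposition first.
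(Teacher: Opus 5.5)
Your proof is correct, and it takes a somewhat different route from the paper.

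The paper computes $a_0,a_1,\dots$ one at a time. Given $a_n$, it lists the finitely many $b\in(a_n,G(n+1)]$ with $f(b)=a_n$ and eliminates all but one by pruning. A candidate $x$ for $a_m$ is declared ``bad'' when it has no child in $(x,G(m+1)]$, badness propagates upward, and compactness guarantees that every wrong candidate is eventually declared bad. Membership of $x$ is then read off from the increasing sequence.

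You instead use specialness to get a computable depth function and decide $x\in A$ directly. You run one search for a level $m$ at which all depth-$m$ numbers below $g(m)$ share a single depth-$d(x)$ ancestor. The explicit bound $m_n$ replaces the bad-candidate propagation and the appeal to compactness.

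What each approach buys:
\begin{itemize}
\item The paper's pruning evaluates $f$ only on bounded intervals. So it really uses only totality of $f$, not specialness, but it needs $G$ to bound the child search at every level.
\item Yours uses the oracle only through the single inequality $a_m\le g(m)$ per level. It needs specialness, which the preceding proposition supplies at no cost.
\item Your route adapts immediately to the regressive case of Proposition \ref{coimmune regressive implies umt}. Replace ``$d(y)=m$'' by ``$d(y)\ge m$'': $A\cap[0,g(m)]$ contains $m+1$ elements of pairwise distinct depths, so one of them has depth $\ge m$. Membership is still $x\in A$ iff $x=a_{d(x)}$.
\end{itemize}

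A minor point: your facts (i) and (ii) follow directly from co-immunity. The depth-$n$ numbers other than $a_n$, and the descendants of any $y\notin A$, form computable subsets of $\ol{A}$, hence are finite. So you need not lean on Proposition \ref{regressingtree}. That is worth doing, because its tree as literally defined contains the spurious path $(a_0,a_0,\dots)$, since $f(a_0)=a_0$. Your depth-based tree avoids this.
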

\begin{proof}
Let $f$ be a special total computable function witnessing the retraceability of $A$, which is possible by Proposition \ref{regressingtree} and Proposition \ref{totaltree}. Let $G$ be a function majorizing $p_A$, and assume we have computed $a_0,\dots,a_n$. Note that we know $a_{n+1}\leq G(n+1)$. Hence, for each $b$ with $a_n<b\leq G(n+1)$, either:

\begin{enumerate}
\item $b=a_{n+1}$ or;

\item $b=a_{j}$ for $j>n+1$ or;

\item $b\notin A$.
\end{enumerate}
And moreover, we can computably decide whether each $b$ is in case $(1)$, or is in case $(2)$/$(3)$---in particular, we can decide if each such $b$ is in case $(1)$ or not. If $f(b)\neq a_n$, then $b$ is in case $(2)$ or $(3)$. If $f(b)=a_n$, then $b$ is either in case $(1)$ or is in case $(3)$. When $f(b)=a_n$, $b$ is in case $(3)$ precisely when there are only finitely many nodes extending $(a_0,\dots,a_n,b)\in T_{f,A}$ on the tree by co-immunity. So we consider the following procedure:

Say $x\in\omega$ is a \emph{candidate for $a_m$} for some $m>n$ if $f(x)$ is a candidate for $a_{m-1}$; we set $a_0,\dots,a_n$ to be the candidates for themselves, as we know what those are by the assumption. If $x$ is a candidate for $a_m$ and there is no number $y$ in $(x,G(m+1)]$ such that $f(y)=x$, then $x$ is a \emph{bad} candidate, in the sense that $x$ must not be in $A$ in this case. In the same sense, if $\sigma\in T_{f,A}$ is such that $\sigma(|\sigma|-1)=x$ is a candidate for $a_m$, we say $x$ is a bad candidate when the following holds: for all $b\in\omega$, if $\sigma^\frown b\in T_{f,A}$, then $b$ is a bad candidate. This is because, in that case, we must have $x\notin A$. Let $b_0,\dots,b_N$ be a list of all candidates for $a_{n+1}$ in $(a_n,G(n+1)]$. If $b_i$ is not in $A$, then there are only finitely many nodes extending $(a_0,\dots,a_n,b_i)$ on the tree $T_{f,A}$. Hence, by the compactness, it must turn out that $b_i$ is a bad candidate for $a_{n+1}$. So we can enumerate the candidates for $a_m$'s for $m>n$ until precisely $N$-many candidates of $b_0,\dots,b_N$ turn out to be the bad candidates, which must happen at some point by the compactness. Let $b_j$ be the remaining candidate. Then $a_{n+1}=b_j$.

With the oracle $G$, this shows that for each $b\in (a_n,G(n+1)]$, we can computably decide whether $b$ is in case $(1)$ or not, and compute $a_{n+1}$. Since $A$ is retraceable, this shows that $A$ is computable from $G$.
\end{proof}
We can weaken the hypothesis of Proposition \ref{retraceable implies umt} to regressive, without weakening the conclusion.
\begin{prop}\label{coimmune regressive implies umt}
If $A=\{a_0,a_1,\dots\}$ is a co-immune regressive set which has a total regressing function, then $A$ is uniformly majorreducible.
\end{prop}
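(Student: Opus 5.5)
The plan is to follow the pattern of Proposition \ref{retraceable implies umt}, with levels of the tree taking the place of the numerical order that was available in the retraceable case. First, by Proposition \ref{totaltree}, fix a special total computable regressing function $f$ for $A$. By Proposition \ref{regressingtree}, $T_{f,A}$ is finitely branching and its unique infinite path is $A$. Since $f$ is special and total, every $x\in\omega$ lies on $T_{f,A}$. Moreover, the node $\sigma_x=(a_0,\dots,f(x),x)$ ending in $x$ can be computed uniformly in $x$ by iterating $f$ until $a_0$ appears. Call $|\sigma_x|-1$ the \emph{level} of $x$; it is computable. Then $x\in A$ iff $x=a_k$ for its level $k$, iff the subtree of $T_{f,A}$ above $\sigma_x$ is infinite. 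So it suffices to compute $a_k$ from any $G\ge p_A$, uniformly in $k$.

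The way $G$ enters is as follows. For every $m$, the numbers $p_A(0),\dots,p_A(m)$ lie in $[0,G(m)]$ and are $m+1$ distinct elements of $A$. Distinct elements of $A$ have distinct levels. Hence at least $m+1-k$ of these numbers have level $\ge k$, and each of them has $a_k$ as its level-$k$ ancestor. On the other hand, each level-$k$ node $\tau\ne a_k$ is not on $A$. By co-immunity and finite branching, exactly as in the proof of Proposition \ref{regressingtree}, the set of numbers descending from such a $\tau$ is finite, since otherwise it would be an infinite c.e.\ subset of $\ol{A}$. There are only finitely many level-$k$ nodes, so the total number $F_k$ of numbers of level $\ge k$ not descending from $a_k$ is finite. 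Thus, computably in $G$, I form for each $m$ the finite set $D_m$ of numbers $\le G(m)$ of level $\ge k$, grouped by their level-$k$ ancestor. The class belonging to $a_k$ has size $\ge m+1-k$, while all other classes together have size $\le F_k$.

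The next step is to turn this counting into a halting search rather than a limit, in the spirit of the ``bad candidate'' argument of Proposition \ref{retraceable implies umt}. I would declare a level-$k$ node $\tau$ \emph{bad} once we have seen, using the c.e.\ enumeration of $T_{f,A}$ together with the bounds $G(m)$ at deeper levels $m$, that every branch above $\tau$ dies out before reaching a level at which $G$ forces an element of $A$ to exist. The aim is to mirror the retraceable case: there, a candidate for $a_m$ with no child in $(x,G(m+1)]$ was bad, and compactness (K\"onig's lemma on the finite subtree above a non-path node) guaranteed that all wrong candidates are eventually declared bad. We then wait until all but one level-$k$ node seen so far is bad, and output the survivor as $a_k$. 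The procedure is uniform in $G$, which gives uniform majorreducibility.

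The main obstacle is exactly this certification step. In the regressive case elements of $A$ are not increasing along the path, so $G(m)$ no longer bounds the child of $a_m$. Moreover, the set of children $\{z: f(z)=y\}$ of a node is only c.e.\ (finite, but not computably bounded), so we cannot directly confirm that a subtree is finite. My first attempt would be to use the counting from the second paragraph instead. For the true $a_k$, the number of its descendants in $[0,G(m)]$ grows without bound, while any wrong $\tau$ has a fixed finite number of descendants. I would try to show that a wrong $\tau$ is refuted at some finite $m$ by a pigeonhole argument: among $p_A(0),\dots,p_A(m)\le G(m)$ there must be an element of level $>$ (height of the finite subtree above $\tau$) whose ancestor is not $\tau$. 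If this does not yield an effective refutation, I would fall back on comparing levels. Since the numbers $p_A(0),\dots,p_A(m)$ occupy $m+1$ distinct levels and all lie $\le G(m)$, one can try to bound, computably in $G$, the levels at which $A$ must have an element below a given threshold. Such a bound would give the analogue of the ``no child in $(x,G(m+1)]$'' test from Proposition \ref{retraceable implies umt}.
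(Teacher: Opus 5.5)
Your reduction is sound: with a special total $f$ (Proposition~\ref{totaltree}) levels and ancestors are computable, and each level is finite. Every level-$k$ node $\tau\neq a_k$ has only finitely many descendants by co-immunity, and it suffices to compute $a_k$ from $G$ uniformly in $k$. But the proposal stops exactly at the step that carries the content, so there is a genuine gap. None of the three mechanisms you describe is a halting procedure:
\begin{itemize}
\item The majority count of your second paragraph only shows that the class of $a_k$ eventually dominates once $m+1-k>F_k$. Since $F_k$ is not available from $G$, this yields $a_k$ only as a limit relative to $G$, not by a $G$-computation.
\item The badness notion of your third paragraph requires certifying that every branch above $\tau$ dies out, i.e.\ that a computable set of descendants is finite. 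That cannot be done in finite time: the children of a node are in fact decidable, since $f$ is total, but nothing you know bounds them.
\item The pigeonhole sentence in your fourth paragraph states a true fact but is not a test, since you cannot tell which numbers below $G(m)$ lie in $A$.
\end{itemize}

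The missing idea is to look only at \emph{high levels inside the window}, never at whole subtrees. This is what the paper's remark amounts to: replace $(a_n,G(n+1)]$ by $[0,G(n+1)]$ and use an element of $A$ there distinct from $a_0,\dots,a_n$.

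For $m\ge k$, let $C_m$ be the set of level-$k$ ancestors of those $x\le G(m)$ whose level is at least $m$. This finite set is computable from $G$ uniformly in $k$ and $m$.
\begin{itemize}
\item $a_k\in C_m$ for every $m\ge k$. Indeed, $p_A(0),\dots,p_A(m)\le G(m)$ are $m+1$ elements of $A$ with distinct levels, so one of them is some $a_j$ with $j\ge m\ge k$, and $a_j$ descends from $a_k$.
\item $C_m=\{a_k\}$ for large $m$. Let $H$ exceed the levels of all descendants of the finitely many level-$k$ nodes other than $a_k$. Then for $m>H$ every number of level $\ge m$ descends from $a_k$.
\end{itemize}
Hence the $G$-computable search for some $m\ge k$ with $|C_m|=1$ halts, and because $a_k$ belongs to every $C_m$, the unique element found is $a_k$.

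A wrong $\tau$ is thus refuted by the finite check that no $x\le G(m)$ of level $\ge m$ descends from $\tau$, without ever confirming that its subtree is finite. This is the regressive analogue of the ``no child in $(x,G(m+1)]$'' test you were looking for. With it your outline becomes a complete proof, uniform in $G$, since $x\in A$ iff $x=a_k$ for $k$ the level of $x$.
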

\begin{proof}
Let $f$ be a special total computable function regressing $A$. Let $G$ be a function majorizing $p_A$. With a similar proof to that of Proposition \ref{retraceable implies umt}, we can show that $a_i$'s are computable using $G$ as an oracle. The only difference is that instead of considering the intervals $(a_n,G(n+1)]$'s, we have to consider the intervals $[0,G(n+1))$'s, as regressing functions do not necessarily decrease on $A$. Using a similar argument as in Proposition \ref{retraceable implies umt} and the fact that $A\cap [0,G(n+1)]$ contains an element distinct from $a_0,\dots,a_n$, we can compute $a_i$'s using $G$ as an oracle.

To see that $A$ is computable from $G$, let $b\in\omega$. Note that $b$ is in $A$ if and only if there is some $m\in\omega$ such that $f^m(b)=a_0$ and $b=a_m$. And since $f$ is a special total computable function, for any $b\in\omega$, there exists $m\in\omega$ such that $f^m(b)=a_0$. So it follows that $G$ can compute $A$.
\end{proof}
\begin{cor}
If $A$ is a co-immune regressive/retraceable set which has a total regressing/retracing function, then $A$ is either hyperimmune or computable.
\end{cor}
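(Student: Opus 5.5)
The plan is to argue by contrapositive, using Proposition \ref{coimmune regressive implies umt} (which contains Proposition \ref{retraceable implies umt} as a special case). Let $A$ be a co-immune regressive or retraceable set with a total regressing/retracing function, and assume $A$ is not hyperimmune. We will show that $A$ is computable.

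First, non-hyperimmunity gives a majorant. By the standard characterization, an infinite set $A$ is hyperimmune exactly when no computable function majorizes its principal function $p_A$. So there is a total computable $g$ with $g(n)\geq p_A(n)$ for all $n$. (If one prefers the disjoint strong arrays definition, the usual translation gives this majorant directly: from a computable array witnessing non-hyperimmunity, set $g(n)$ to be the maximum of the first $n+1$ array blocks.)

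Second, apply Proposition \ref{coimmune regressive implies umt}. It tells us that $A$ is uniformly majorreducible, and in particular majorreducible. Hence there is a Turing functional $\Phi$ with $\Phi^f=A$ for every $f\geq p_A$. Taking $f=g$, we get $A=\Phi^g$. Since $g$ is computable, $A$ is computable.

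There is no real obstacle here. The only point to be careful about is which definition of hyperimmunity is in use, and the equivalence between the strong-array definition and the majorant definition is classical. It is also worth noting that the dichotomy is genuine: both alternatives can occur, and computable sets are trivially covered.
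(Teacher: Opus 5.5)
Your proposal is correct and matches the paper's proof: both invoke Proposition~\ref{coimmune regressive implies umt} to get uniform majorreducibility and then apply it to a computable majorant of $p_A$, whose graph is computable. One small wording point: the existence of a single functional $\Phi$ with $\Phi^f=A$ for all $f\geq p_A$ is the uniform property itself, not something you get from plain majorreducibility; either one suffices for your argument.
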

\begin{proof}
By Proposition \ref{coimmune regressive implies umt}, such a set $A$ is uniformly majorreducible. If $A$ is nonhyperimmune, then there is a total computable function $\phii$ majorizing $A$. The graph of $\phii$ is also computable, thus $A$ is computable by the majorreducibility.
\end{proof}
Note that if $A$ is a co-immune regressive set which is sparse enough, then $A$ is uniformly majorreducible by Proposition \ref{0'comp}: if its principal function grows faster than the $0'$ settling time function, then any function majorizing $p_A$ can compute $0'$, hence $A$. On the other hand, without the assumption of having a total regressing function, a co-immune regressive set may not even be introreducible.
\begin{prop}
There exists a co-immune regressive set which is not introreducible, and hence not majorreducible.
\end{prop}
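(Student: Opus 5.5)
The plan is to use a simple set rather than build a new construction. A simple set $A$ is an infinite c.e.\ set whose complement $\ol{A}$ is infinite and immune. So $A$ is co-immune by definition, and it is noncomputable because its complement is infinite and has no infinite c.e.\ subset. Simple sets exist by Post's classical construction. I expect every noncomputable simple set to satisfy the statement, so the proof reduces to checking three properties of one fixed simple set $A$.

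\emph{Step 1 (regressive).} As already noted in the paper before Proposition \ref{counterex3}, every infinite c.e.\ set is regressive. Take a computable one-to-one enumeration $a_0,a_1,\dots$ of $A$. Define the partial computable $\phii$ by: on input $x$, search for the $n$ with $a_n=x$, and output $a_{n-1}$ if $n>0$ and $a_0$ if $n=0$. This $\phii$ regresses $A$. Together with co-immunity, this shows $A$ is a co-immune regressive set.

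\emph{Step 2 (not introreducible).} Every infinite c.e.\ set has an infinite computable subset. Concretely, let $C=\{a_{n_0}<a_{n_1}<\cdots\}$, where $a_{n_{k+1}}$ is the first element in the enumeration exceeding $a_{n_k}$; then $C\in[A]^\omega$ is computable. If $A$ were introreducible, $C$ would compute $A$, making $A$ computable, which contradicts simplicity. Hence $A$ is not introreducible.

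\emph{Step 3 (not majorreducible).} Every majorreducible set is introreducible, as observed in the introduction, since the principal function of a subset majorizes that of the set. So Step 2 already gives that $A$ is not majorreducible.

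There is no real obstacle here. The only point needing care is that the required notions match: the paper's ``co-immune'' means exactly that $\ol{A}$ is immune, which is part of the definition of simple. It is also worth remarking that this does not conflict with Proposition \ref{coimmune regressive implies umt}. By that proposition, $A$ cannot have a total regressing function, in line with the Appel--McLaughlin result that no simple set has one. This confirms that the total-function hypothesis there is necessary.
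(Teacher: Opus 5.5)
Your proof is correct, and it takes a genuinely different route from the paper.

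\textbf{The paper's route.} The paper simply cites Theorem \ref{regressive not intro T}. That is a finite-injury construction of a co-immune regressive set $B$. Non-introreducibility is obtained by diagonalizing against uniform introreduction (the $\hat{N}_e$ requirements) and then invoking Jockusch's Intersection Theorem, which applies because $B$ is regressive and hence uniformly introenumerable.

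\textbf{Your route.} You observe that any simple set already works. Co-immunity is part of the definition of simplicity. Regressiveness comes from a one-to-one computable enumeration. Non-introreducibility follows because an infinite c.e.\ set has an infinite computable subset, which would force $A$ to be computable.

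\textbf{What your approach buys.} It is entirely elementary and needs no priority argument. Combined with Proposition \ref{coimmune regressive implies umt}, it also gives a quick proof that no simple set has a total regressing function. So it shows the necessity of the total-function hypothesis there just as well as the paper's example does.

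\textbf{What the paper's approach buys.} It gives extra properties that a simple set cannot have: the set of Theorem \ref{regressive not intro T} is also not c.e.\ and not retraceable. These are what the paper needs to populate the regressive, non-c.e., non-introreducible region of Figure \ref{figure1}. Your example lands in the noncomputable c.e.\ region instead. It is also the kind of ``trivial'' regressive set the paper sets aside when it calls immune regressive sets the interesting ones. For the proposition as literally stated, however, none of this is required, and your simple-set example suffices.
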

\begin{proof}
The set we build in Theorem \ref{regressive not intro T} is such a set.
\end{proof}
We end this section with a remark.
\begin{rem}\label{remark}
Jockusch notes that there is a maximal set $M$ such that $\ol{M}$ is uniformly majorreducible \cite[Corollary 6.3]{jock}. Such an $M$ is hyperhypersimple, so $\ol{M}$ has no retraceable subset by Yates \cite[Theorem 6]{yates}. Dekker showed that every infinite regressive set has an infinite retraceable subset \cite{dekker2}. So it follows that $\ol{M}$ has no regressive subset. This also shows that there exists a uniformly majorreducible set which is not regressive. Moreover, this result shows that there exist (uniformly) introreducible sets without regressive subsets.
\end{rem}
\begin{thrm}[Jockusch]
There exists a uniformly majorreducible set without a regressive subset.
\end{thrm}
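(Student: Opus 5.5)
The plan is to take the witness from Remark \ref{remark}: by \cite[Corollary 6.3]{jock} there is a maximal set $M$ whose complement $\ol{M}$ is uniformly majorreducible. We will show that $\ol{M}$ has no regressive subset.

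First, $M$ is maximal, hence hyperhypersimple. Next we check that no infinite subset of $\ol{M}$ is retraceable. Suppose $R\subseteq\ol{M}$ were an infinite retraceable set with retracing function $\phii$. Yates' theorem \cite[Theorem 6]{yates} says that the complement of a hyperhypersimple set has no infinite retraceable subset. If we want to see this directly, the idea is as follows. From $\phii$ one builds a weak array of disjoint finite sets: at level $n$, enumerate the nodes of the retracing tree of length $n+1$, with disjointness arranged by a standard thinning. Every set in this array meets $\ol{M}$, because the true node $(a_0,\dots,a_n)$ always appears. This contradicts the array characterization of hyperhypersimplicity. In the plan we simply cite Yates rather than reprove this.

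Then suppose, for contradiction, that $B\subseteq\ol{M}$ is an infinite regressive set. By Dekker's theorem \cite{dekker2}, $B$ has an infinite retraceable subset $R$. Then $R\subseteq\ol{M}$, which contradicts the previous step. So $\ol{M}$ has no regressive subset.

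Finally, $\ol{M}$ is uniformly majorreducible by the choice of $M$, so it is the required set. The main obstacle is the existence of a maximal set with uniformly majorreducible complement. This is Jockusch's result, and we invoke it as given. If we had to prove it, the natural approach is to run Friedberg's maximal set construction so that the $e$-states of the markers settle by a stage bounded by a computable function of the positions. Then any $f\ge p_{\ol{M}}$ bounds the stage by which the first $n$ elements of $\ol{M}$ have settled, and this lets $f$ uniformly compute $\ol{M}$.
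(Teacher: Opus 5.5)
Your argument is the paper's own (Remark \ref{remark}): take the maximal set $M$ from \cite[Corollary 6.3]{jock}, use hyperhypersimplicity and Yates' theorem to rule out infinite retraceable subsets of $\ol{M}$, and use Dekker's theorem \cite{dekker2} to pass from a regressive subset to a retraceable one. One caution: your parenthetical ``direct'' sketch of Yates' theorem does not work as written, because the set of nodes of a given length in the retracing tree need not be finite, so it does not give a weak array of finite sets. (Here $R\subseteq\ol{M}$ is not co-immune, so Proposition \ref{regressingtree} does not apply.) Nothing in your proof depends on that sketch, since you cite Yates instead.
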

\section{Introenumerable and introreducible sets}
We show in this section using a similar idea to the proof of Proposition \ref{retraceable implies umt} that every co-c.e.\ introenumerable set is majorreducible. Then we introduce a new method of building an introenumerable or introreducible set and discuss some related open questions. Notably, introenumerability and uniform introenumerability are distinct notions. If a set $A$ is introenumerable, then given some $C\in[A]^\omega$, all we can say is that $A\leq_e C$ via \emph{some} enumeration operator. In this sense, constructing an introenumerable set is a difficult problem.

When $A$ is introenumerable, so that $A\leq_e^i A$, using a failed forcing method, we can show that there exists $C\in[A]^\omega$ with $A\leq^{ui}_e C$ \cite[Proposition 3.7]{green}. In a sense, Corollary \ref{forcingtool} asserts that this can be reversed: we can patch together some sets $B_0,\dots,B_n\subseteq A$ with each $B_i$ being $\geq^{ui}_e A$ and get an introenumerable set. This is a surprising fact because even if $B_0,\dots,B_n$ were all $\geq^{ui}_e A$ via a fixed enumeration operator $\Theta$, it is not necessary that $\Theta$ witnesses $\bigcup_{i\leq n}B_i\geq_e A$. Further, it is not obvious why the set $\bigcup_{i\leq n}B_i$ is even enumeration above $A$. This method gives a rise to building an introenumerable set with some desired properties such as non-uniform introenumerability.

\subsection{Co-c.e.\ introenumerable sets}
In the following proofs and later proofs, we view a string as a set in the following sense. For each string $\sigma\in 2^{<\omega}$, we can view $\sigma$ as the set $\{n:\sigma(n)=1\}$. Hence, $\sigma\cap S=\emptyset$ for some set $S$ means that for all $n$, $\sigma(n)=1$ implies $n\notin S$. Likewise, $\sigma\subseteq S$ means that for all $n$, $\sigma(n)=1$ implies $n\in S$. For a string $\sigma$ and $n\in\omega$, $\sigma\cup\{n\}$ denotes the string $\tau$ such that has the length $\max\{|\sigma|,n+1\}$ and $\tau(k)=1$ if and only if $k=n$ or $\sigma(k)=1$. For two strings $\sigma$ and $\tau$, $\sigma\cup\tau$ is the string $\delta$ of the length $\max\{|\sigma|,|\tau|\}$ such that $\delta(n)=1$ if and only if either $\sigma(n)=1$ or $\tau(n)=1$. Note that for a tree $T\subseteq 2^{<\omega}$, $[T]$ denotes the set of infinite branches of $T$. In the above sense, each element $x\in[T]$ can be viewed as a set, which is the set that has its characteristic function equal to $x$. 
\begin{lem}\label{tree}
Let $A\leq^{i}_e B$ be infinite sets. If there exists a computable tree $T\subseteq 2^{<\omega}$ such that $[T]\neq\emptyset$ and $[T]\subseteq[B]^\omega$, then there is a computable subtree $S\subseteq T$ such that $[S]\neq\emptyset$ and each $X\in[S]$ is uniformly enumeration above $A$, i.e.\ there is an enumeration operator $\Gamma$ such that $\Gamma^X=A$ for all $X\in [S]$.
\end{lem}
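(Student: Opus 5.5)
Since $[T]\subseteq[B]^\omega$ and $A\leq^i_e B$, every $X\in[T]$ is enumeration above $A$. I will run a Baire category argument inside the closed set $[T]$, which gets the operator and the subtree together. For each enumeration operator $\Theta_e$, let
$$\mathcal{C}_e=\{X\in[T]:\Theta_e(X)=A\}.$$
Then $[T]=\bigcup_e\mathcal{C}_e$, and $[T]$ is a nonempty closed subset of Cantor space, hence a Baire space. So some $\mathcal{C}_e$ is not meager in $[T]$. The difficulty is that $\mathcal{C}_e$ is only $\Pi^0_2$ relative to $A$ and is not closed. The goal is therefore a \emph{computable} subtree on which a single operator works.

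Step 1: find a node $\sigma\in T$ with $[T_\sigma]\neq\emptyset$ and an index $e$ such that $\Theta_e(X)\subseteq A$ for all $X\in[T_\sigma]$ and $\Theta_e(X)\supseteq A$ for comeager many $X$ in $[T_\sigma]$. I will argue as in the Hechler forcing proof of Theorem \ref{pi11}, now with Cohen forcing on $T$ (conditions are nodes of $T$ with infinitely many extensions). Take a sufficiently generic $X\in[T]$. Then $\Theta_e(X)=A$ for some $e$, so some condition $\sigma\prec X$ forces $\Theta_e(X)=A$. Unwinding this forcing relation gives two facts. First, whenever $\tau\succeq\sigma$ in $T$ and $\Theta_e(\tau)\ni n$, we have $n\in A$; otherwise $\tau$ would force $n\in\Theta_e(X)\setminus A$. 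Second, for each $m\in A$ the set of $\tau\succeq\sigma$ in $T$ with $m\in\Theta_e(\tau)$ is dense above $\sigma$ in $T$.

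Step 2: turn the density into a uniform operator on a computable subtree. The plan is to build $S\subseteq T_\sigma$ computably so that every path of $S$ meets, for each $m\in A$, a node enumerating $m$. Since $A$ is not known to be c.e., I cannot list the elements $m$ directly. Instead I will replace $\Theta_e$ by the operator $\Gamma$ defined by
$$\Gamma(X)=\{n:\exists\tau\,(\sigma\preceq\tau\prec X,\ \tau\in T,\ n\in\Theta_e(\tau))\}\cup\{n:\exists\rho\in T_\sigma\ n\in\Theta_e(\rho)\}.$$
The second clause is independent of $X$. By the first fact, $\Gamma(X)\subseteq A$ for every $X$ extending $\sigma$. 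By the second fact, every $m\in A$ is enumerated by some $\rho\in T_\sigma$, so $\Gamma(X)\supseteq A$. This clause is an enumeration-operator clause with empty oracle requirement, since $T$ is computable. So I take $S=T_\sigma$: the full subtree of $T$ through $\sigma$, pruned to nodes comparable with $\sigma$. This is computable, and $[S]\neq\emptyset$ because $\sigma$ was a legitimate condition. With this $S$, $\Gamma(X)=A$ for all $X\in[S]$.

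The step I expect to be the main obstacle is Step 1: choosing the forcing conditions correctly so that a condition forcing $\Theta_e(X)=A$ really exists. Conditions must have infinitely many extensions in $T$ ($T$ need not be pruned), and the genericity must be relativized to $A$, since the dense sets mention $A$. I would handle this by letting the conditions be nodes $\sigma\in T$ with $[T_\sigma]\neq\emptyset$, and by taking $X$ to be $A\oplus T$-generic for that poset. Such an $X$ still lies in $[T]\subseteq[B]^\omega$, so the hypothesis $A\leq^i_e B$ applies to it.
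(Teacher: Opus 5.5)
There is a genuine gap in Step 2. Your first fact is justified only for conditions, i.e.\ for nodes $\tau\succeq\sigma$ with $[T_\tau]\neq\emptyset$. A dead end of $T$ is not a condition, and $\sigma$ forcing $\Theta_e(X)=A$ says nothing about it. Yet the second clause of $\Gamma$ ranges over \emph{all} $\rho\in T_\sigma$, dead ends included, and a dead end can enumerate elements outside $A$. For example, let $A$ be the even numbers and $B=\omega\setminus\{1\}$. Let $T$ consist of the strings with a $1$ at every even position and, if their length is at least $3$, a $0$ at position $1$. Let $\Theta_e$ enumerate each even number outright and enumerate $1$ from $\{1\}$. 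Then $\Theta_e(X)=A$ for every $X\in[T]$, so Step 1 may return this $e$ with $\sigma=\emptyset$, but the dead end $11$ puts $1$ into $\Gamma(X)$ for every $X$. You cannot restrict the clause to extendible nodes: extendibility is only $\Pi^0_1$, so the clause would become $\Sigma^0_2$ rather than an enumeration operator. If you drop the clause instead, Step 1 only gives $A\subseteq\Gamma(X)$ for comeager many $X\in[T_\sigma]$, not for all $X\in[S]$. (If every node of $T$ were extendible, your argument would be fine.)

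More fundamentally, Steps 1 and 2 use nothing beyond the fact that sufficiently generic paths of $T$ enumerate $A$, and that cannot suffice. The conclusion of the lemma forces $A$ to be c.e.: by compactness, $n\in A$ iff for some $\ell$ every node of $S$ of length $\ell$ enumerates $n$ via $\Gamma$, exactly as in the proof of Theorem~\ref{coceintroe}. Now let $K$ be the halting set, and let $T$ consist of the strings $\rho$ with a $1$ at every even position such that $m\notin K_{|\rho|}$ whenever $\rho(2\langle m,j\rangle+1)=1$. With $\Theta$ enumerating $m$ from $\{2\langle m,j\rangle+1\}$, every sufficiently generic $X\in[T]$ has $\Theta(X)=\overline{K}$. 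Your argument would run verbatim and make $\overline{K}$ c.e. The missing idea is to use that \emph{every} path of $T$ enumerates $A$. The paper does this by, in effect, forcing with computable subtrees rather than Cohen forcing. It runs through all operators $\Gamma_n$ and passes from $T_n$ to one of two computable subtrees: the nodes compatible with some extendible $\tau$ having an $m\notin A$ in $\Gamma_n^\tau$, or $\{\rho\in T_n: m\notin\Gamma^\rho_{n,|\rho|}\}$ for some $m\in A$ that some $X\in[T_n]$ omits. When neither move is possible, $\Gamma_n^X=A$ for all $X\in[T_n]$, and $T_n$ is the desired $S$. If the process never stopped, compactness would yield an $X\in\bigcap_n[T_n]\subseteq[B]^\omega$ defeating every operator, contradicting $A\leq^i_e B$.
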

\begin{proof}
Let $\Gamma_i$'s be a list of all enumeration operators. Start with the tree $T_0=T$. Having defied $T_n$, we have three cases to consider.

\emph{Case 1.} There exists $\tau\in T_n$ such that there is an infinite branch extending $\tau$ on $T_n$ and for some $m\notin A$, $m\in\Gamma_{n}^\tau$.

\noindent In this case, let $T_{n+1}$ be the subtree of $T_n$ consisting only of those nodes compatible to $\tau$. Proceed constructing $T_{n+2}$.

\emph{Case 2.} There exists $m\in A$ such that for some $X\in[T_n]$, $m\notin \Gamma_{n}^X$.

\noindent In this case, let $T_{n+1}:=\{\sigma\in T_n: m\notin \Gamma_{n,|\sigma|}^\sigma\}$. Note that for each $X\in[T_{n+1}]$, $m\notin \Gamma_n^{X}$. Proceed constructing $T_{n+2}$.

\emph{Case 3.} Neither Case 1 nor Case 2 happens.

\noindent In this case, we just set $T_{n+1}=T_n$ and stop the construction.

Note that each $T_n$, if defined, is a computable tree with an infinite branch. And the construction must stop for some $n$, as otherwise, $\bigcap_n[T_n]$ is nonempty, and any $X\in \bigcap_n[T_n]$ is an infinite subset of $B$ such that $X\ngeq_e A$. Say the construction stops after defining $T_n$. Then $T_n$ is a computable tree such that $[T_n]$ is nonempty and each $X\in[T_n]$ is such that $\Gamma_n^X=A$.
\end{proof}
In \cite[Corollary 6.5]{jock}, Jockusch shows that every co-c.e.\ uniformly introenumerable set is actually uniformly majorreducible. Noting that every nonhyperimmune majorreducible set is computable, he concludes that every co-c.e.\ uniformly introenumerable set is either computable or hyperimmune. The following results get rid of the uniformity assumption.
 \begin{thrm}\label{coceintroe}
 Let $A$ be a co-c.e.\ introenumerable set. Then $A$ is majorreducible.
 \end{thrm}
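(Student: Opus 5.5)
The plan is to exhibit, for an arbitrary $f\geq p_A$, a tree $T$ that is computable in $f$, whose infinite branches are exactly infinite subsets of $A$, and which has $A$ itself as a branch. Relativizing Lemma \ref{tree} to $f$ should then make $A$ c.e.\ in $f$. Since $\ol{A}$ is c.e., this gives $A\leq_T f$.

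Fix a computable enumeration $(\ol{A}_s)_s$ of the c.e.\ set $\ol{A}$, and let $f\geq p_A$. Define $T\subseteq 2^{<\omega}$ by putting $\sigma\in T$ iff both of the following hold:
\begin{itemize}
\item $\sigma\cap \ol{A}_{|\sigma|}=\emptyset$;
\item for every $n$ with $f(n)<|\sigma|$, the string $\sigma$ has at least $n+1$ ones in $[0,f(n)]$.
\end{itemize}
The tree $T$ is computable from (the graph of) $f$. Every $X\in[T]$ is disjoint from $\ol{A}=\bigcup_s\ol{A}_s$, so $X\subseteq A$. Every such $X$ is also infinite, because it has at least $n+1$ elements below $f(n)$ for each $n$. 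Finally, $A\in[T]$: no element of $A$ ever enters $\ol{A}_s$, and $p_A\leq f$ gives $A$ at least $n+1$ elements in $[0,f(n)]$. Hence $[T]\neq\emptyset$ and $[T]\subseteq[A]^\omega$. The non-uniform hypothesis $A\leq^i_e A$ is the one Lemma \ref{tree} needs.

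Next I run the proof of Lemma \ref{tree} relative to $f$. The construction there only prunes the tree, either by restricting to the nodes compatible with a fixed $\tau$ or by the $\Gamma_n$-pruning. Both operations preserve computability relative to $f$, and the argument that the process halts uses only that every $X\in[T]$ is an infinite subset of $A$ and hence satisfies $X\geq_e A$. So the lemma yields an $f$-computable subtree $S\subseteq T$ with $[S]\neq\emptyset$ and an enumeration operator $\Gamma$ such that $\Gamma^X=A$ for all $X\in[S]$. The choice of $S$ and $\Gamma$ is non-effective, which is harmless because we only want majorreducibility, not the uniform version.

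It remains to show that $A$ is c.e.\ in $f$. Since $[S]\neq\emptyset$, we have $n\in A$ iff $n\in\Gamma^X$ for every $X\in[S]$. Consider the subtree $S_n=\{\sigma\in S: n\notin\Gamma^\sigma_{|\sigma|}\}$. Because $\Gamma$ is an enumeration operator, $n\in\Gamma^X$ holds iff $n\in\Gamma^{X\restriction k}_k$ for some $k$; hence $[S_n]=\emptyset$ iff $n\in A$. By König's lemma (binary branching), $[S_n]=\emptyset$ iff $S_n$ has no node at some level $k$, and this is a $\Sigma^0_1(f)$ condition because $S_n$ is $f$-computable. Thus $A$ is $f$-c.e., and since $\ol{A}$ is c.e., $A\leq_T f$.

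The main step to check carefully is the relativization of Lemma \ref{tree}. One must verify that its termination argument needs nothing beyond $[T]\subseteq[A]^\omega$ together with the unrelativized introenumerability of $A$, and that each $T_n$ remains $f$-computable and retains an infinite branch. The rest is routine once the tree $T$ has been chosen so that it is $f$-computable and all its branches are infinite; achieving infinitude of branches is exactly where the majorant $f$ is used.
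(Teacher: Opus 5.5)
Your proof is correct and follows the paper's argument: the same $f$-computable tree whose branches avoid the enumeration of $\ol{A}$ and meet the density bound coming from $f$, then the relativized Lemma \ref{tree}, then the compactness argument showing that $A$ is $\Sigma^0_1(f)$. Your density condition, quantifying over $n$ with $f(n)<|\sigma|$ instead of the paper's $f(n)<\max(\sigma)$, is the better choice. It guarantees that every branch is an infinite set, as Lemma \ref{tree} requires, whereas the $\max(\sigma)$ version would let finite subsets of $A$ (such as the empty set) be branches.
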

 \begin{proof}
 Let $\ol{A}=W_e$. Given a function $f$ majorizing $p_A$, to show that $f$ computes $A$, it suffices to show that $A$ is $f$-c.e., as $\ol{A}$ is c.e. Consider the following $f$-computable tree $T$ where its nodes are those $\sigma$'s such that
 
 \begin{enumerate}
 \item $\sigma\cap [0,f(n)]$ has at least $(n+1)$-many elements for each $n$ such that $f(n)<\max(\sigma)$, and
 \item for all $s$, $|\sigma|=s$ implies $\sigma\cap W_{e,s}=\emptyset$.
 \end{enumerate}
 Note that $[T]$ is not empty because $A$, as an infinite binary string, is an infinite branch of $T$. If $X\in [T]$, then $X\subseteq A$ due to the condition (2) above. So by a relativized version of Lemma \ref{tree}, we may let $S\subseteq T$ be an $f$-computable subtree such that $[S]\neq\emptyset$ and each $X\in[S]$ is uniformly $\geq_e A$. Then by \cite[Proposition 3.6]{andrews}, $A$ is $f$-c.e. Indeed, a number $n$ is in $A$ if and only if $\{X\in 2^\omega: n\in \Gamma^X\text{ or }X\notin [S]\}=2^\omega$. And by the compactness, the latter condition is $\Sigma_1^0$ in $f$.
 \end{proof}
\begin{cor}\label{intro-eHI}
Let $A$ be a co-c.e.\ introenumerable set. Then $A$ is either computable or hyperimmune.
\end{cor}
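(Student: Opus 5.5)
The plan is to derive the dichotomy from Theorem \ref{coceintroe}, following the same pattern as the corollary at the end of Section 3. Let $A$ be a co-c.e.\ introenumerable set. By Theorem \ref{coceintroe}, $A$ is majorreducible, so every function $f$ majorizing $p_A$ computes $A$. Suppose $A$ is not hyperimmune; we will show that $A$ is then computable.

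Since $A$ is infinite and not hyperimmune, by the standard characterization of hyperimmunity there is a total computable function $\phii$ with $\phii(n)\geq p_A(n)$ for all $n$. In more detail: non-hyperimmunity gives a computable function $g$ with $p_A(n)\leq g(n)$ for all $n$. Equivalently, it gives a strong array of disjoint finite sets, each meeting $A$, and from such an array one reads off a computable bound on $p_A$ by summing the maxima of the first $n+1$ sets. Then $\phii$ majorizes $p_A$. Its graph is computable, and majorreducibility says the graph of $\phii$ computes $A$. Hence $A\leq_T \phii$, so $A$ is computable.

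The only point that needs care is the passage from ``not hyperimmune'' to a total computable majorant of $p_A$. If the paper's working definition of hyperimmune is ``$p_A$ is not dominated by any computable function,'' then there is a computable $g$ with $g(n)\geq p_A(n)$ for all but finitely many $n$. Patching finitely many values gives a computable function that majorizes $p_A$ everywhere. That is enough, since majorreducibility is stated for functions majorizing $p_A$ at every argument. I do not expect any real obstacle here: the substantive work is already in Theorem \ref{coceintroe}.
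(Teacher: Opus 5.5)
Your proposal is correct and follows the paper's approach: Theorem \ref{coceintroe} gives majorreducibility, and a non-hyperimmune set has a computable majorant of $p_A$ whose computable graph then computes $A$. This is the same argument the paper invokes in one line, and it spells the argument out in the corollary at the end of Section 3.
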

\begin{proof}
This is immediate from Theorem \ref{coceintroe} noting that every majorreducible set is either computable or hyperimmune.
\end{proof}
From Corollary \ref{intro-eHI}, it is tempting to believe the stronger result that every co-c.e.\ introenumerable set is actually uniformly majorreducible. However, this should not be true as Lachlan (in private conversation \cite{jock}, where the proof can be found in \cite[Theorem 4.1]{jock}) constructs a co-c.e.\ introreducible set which is not uniformly introreducible. We will refer to this result as Lachlan's Construction.

Certainly, there exists a noncomputable co-c.e.\ introenumerable set. One simple example is a noncomputable co-c.e.\ retraceable set. It is Dekker and Myhill's result in \cite[Theorem T3]{dekker} that every c.e.\ degree contains a co-c.e.\ retraceable set.
 \begin{cor}\label{introeimpliesintrot}
 If $A$ is a co-c.e.\ introenumerable set, then $A$ is introreducible.
 \end{cor}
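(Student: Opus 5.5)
The plan is to combine Theorem \ref{coceintroe} with the observation that majorreducibility implies introreducibility. We must show that every $C\in[A]^\omega$ computes $A$. Fix such a $C$. By Theorem \ref{coceintroe}, $A$ is majorreducible, so every function $f\geq p_A$ computes $A$ (with the oracle taken as the graph of $f$). It therefore suffices to produce, from $C$, a $C$-computable function majorizing $p_A$.

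The natural choice is $f=p_C$, the principal function of $C$. Since $C\subseteq A$, the $n$-th element of $C$ (in increasing order) is at least the $n$-th element of $A$; that is, $p_C(n)\geq p_A(n)$ for all $n$. Indeed, $C\cap[0,p_C(n)]$ contains $n+1$ elements, all of them in $A$, so $A\cap[0,p_C(n)]$ has at least $n+1$ elements, which forces $p_A(n)\leq p_C(n)$. Moreover, $p_C$ and its graph are computable from $C$, since $C$ can list its elements in increasing order. Hence the graph of $p_C$ computes $A$, and because $p_C\leq_T C$, we get $A\leq_T C$. As $C\in[A]^\omega$ was arbitrary, $A$ is introreducible.

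There is essentially no obstacle here. All the real work is in Theorem \ref{coceintroe}, which removes the uniformity assumption via Lemma \ref{tree} and the compactness argument. The only point to state carefully is that majorreducibility refers to the graph of $f$ as oracle, and that the graph of $p_C$ is indeed $C$-computable. This is immediate, which is exactly the remark in the introduction that every majorreducible set is introreducible.
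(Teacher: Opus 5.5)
Your proof is correct and matches the paper's intended argument. The paper states Corollary \ref{introeimpliesintrot} without proof, as an immediate consequence of Theorem \ref{coceintroe} together with the introduction's remark that every majorreducible set is introreducible, since $p_C\geq p_A$ for $C\in[A]^\omega$; you have spelled out that remark carefully, including that $p_C$ is $C$-computable.
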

 \begin{cor}\label{majortnotuniform}
 There exists a majorreducible set which is not uniformly majorreducible.
 \end{cor}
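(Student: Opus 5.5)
The witness will be Lachlan's Construction: the co-c.e.\ introreducible set $A$ that is not uniformly introreducible \cite[Theorem 4.1]{jock}. I will show that $A$ is majorreducible but not uniformly majorreducible.

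For the first part, $A$ is introreducible, so every $C\in[A]^\omega$ computes $A$, and in particular $C\geq_e A$. Hence $A$ is introenumerable. Since $A$ is also co-c.e., Theorem \ref{coceintroe} applies directly and gives that $A$ is majorreducible.

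For the second part, I will argue by contradiction. Suppose $A$ were uniformly majorreducible via a Turing operator $\Phi$, so that $\Phi^{f}=A$ for every $f\geq p_A$. Take any $C\in[A]^\omega$. Because $C\subseteq A$, we have $p_C(n)\geq p_A(n)$ for all $n$. The graph of $p_C$ is computable from $C$ by a single fixed procedure: list the elements of $C$ in increasing order. Composing that procedure with $\Phi$ gives a single Turing operator $\Psi$ with $\Psi^C=\Phi^{p_C}=A$ for every $C\in[A]^\omega$. This would make $A$ uniformly introreducible, contradicting Lachlan's Construction.

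The only point that needs care is that the conversion $C\mapsto p_C$ really is uniform, which it is, and that Lachlan's set is genuinely co-c.e.\ as stated in the paper. So there is no real obstacle: the corollary is Theorem \ref{coceintroe} combined with the observation that uniform majorreducibility implies uniform introreducibility.
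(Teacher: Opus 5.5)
Your proof is correct and is essentially the paper's argument: you apply Theorem \ref{coceintroe} to Lachlan's co-c.e.\ set, and observe that uniform majorreducibility yields uniform introreducibility via $C\mapsto p_C$. You contradict Lachlan's failure of uniform introreducibility directly, which is slightly cleaner than the paper's route through uniform introenumerability; that route tacitly needs Jockusch's Intersection Theorem to know Lachlan's set is not uniformly introenumerable.

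One small slip: $A\leq_T C$ does not by itself give $A\leq_e C$, only that $A$ is $C$-c.e. That is still enough for introenumerability, by the equivalence of definitions noted in the paper's first footnote (via Selman's Theorem).
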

 \begin{proof}
 Let $A$ be a co-c.e.\ introenumerable set which is not uniformly introenumerable; such a set certainly exists, e.g.\ the Lachlan's Construction. So by Theorem \ref{coceintroe}, $A$ is majorreducible. For if $A$ were also uniformly majorreducible, then $A$ would also be uniformly introreducible, and hence uniformly introenumerable. So $A$ is not uniformly majorreducible.
 \end{proof}
 \begin{cor}
 There exists a majorenumerable set which is not uniformly majorenumerable.
 \end{cor}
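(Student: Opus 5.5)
The plan is to reuse the set from the proof of Corollary \ref{majortnotuniform}. Let $A$ be a co-c.e.\ introenumerable set that is not uniformly introenumerable; Lachlan's Construction gives such a set. By Theorem \ref{coceintroe}, $A$ is majorreducible. Since $\leq_T$ implies $\leq_e$ (if $\Phi^f=A$, then $A$ and $\ol{A}$ are both c.e.\ in $f$, so $A\leq_e$ the graph of $f$), $A$ is also majorenumerable. It remains to show that $A$ is not uniformly majorenumerable.

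Suppose for contradiction that there is a single enumeration operator $\Theta$ with $\Theta(\mathrm{graph}(f))=A$ for every $f\geq p_A$. Given $C\in[A]^\omega$, the principal function $p_C$ majorizes $p_A$, and the graph of $p_C$ is uniformly enumeration reducible to $C$: enumerate the pair $(n,m)$ from any finite $D_u\subseteq C$ having exactly $n+1$ elements $\leq m$ with $m$ the largest of them. Here we may need to check that enumerating $(n,m)$ only from such $D_u$ does not produce spurious pairs. The worry is that enumeration from a positive oracle sees only part of $C$, so a finite $D_u\subseteq C$ can miss elements of $C$ below $m$. Then we would enumerate pairs $(n,m)$ with $m>p_C(n)$, and the result would not be a graph.

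To fix this, I would instead enumerate the set $\{(n,m): |D\cap[0,m]|\geq n+1 \text{ for some finite } D\subseteq C\}$. This is the set of pairs with $m\geq p_C(n)$, i.e.\ the ``upward closure'' of the graph. Then I would check whether $\Theta$ may be assumed monotone in this sense. Any $f\geq p_C$ with $f\geq p_A$ has its graph contained in that upward closure, but $\Theta$ applied to a larger set might output more than $A$. This is the main obstacle.

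The cleaner route is to use the coincidence noted after the definitions: $\leq^{um}_e$ coincides with $\leq^{um}_{c.e.}$. So assume a uniform $e$ with $W_e^{f}=A$ for all $f\geq p_A$. Given $C\in[A]^\omega$, an oracle $C$ computes $p_C$ outright, which majorizes $p_A$. Hence $W_e^{p_C}=A$ uniformly, and $A$ is uniformly c.e.\ in every $C\in[A]^\omega$. By the uniform coincidence $\leq^{ui}_e=\leq^{ui}_{c.e.}$ \cite[Proposition 2.1]{green}, $A$ is uniformly introenumerable, which contradicts the choice of $A$. The only delicate step is justifying the transfer from $\leq^{um}_e$ to $\leq^{um}_{c.e.}$, which the paper asserts follows as in \cite[Proposition 2.1]{green}; I would cite that.
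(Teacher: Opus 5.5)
Your proof is correct and is essentially the paper's argument. The paper just says that the co-c.e.\ introenumerable, non-uniformly-introenumerable set from Corollary \ref{majortnotuniform} works, and your steps supply exactly the omitted justification: majorreducible implies majorenumerable, because graphs of total functions have total enumeration degree; and a uniform major-e operator makes $A$ uniformly c.e.\ in each $p_C\leq_T C$, hence $A\leq^{ui}_e A$ by \cite[Proposition 2.1]{green}.

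One small correction. The step you call delicate, passing from $\leq^{um}_e$ to $\leq^{um}_{c.e.}$, is trivial in the direction you use. The nontrivial input is $\leq^{ui}_{c.e.}\Rightarrow\leq^{ui}_e$, which you cite correctly. Your abandoned upward-closure route would also have worked if $\Theta$ is applied only to finite partial functions contained in the upward closure, since these extend to graphs of functions majorizing $p_A$.
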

 \begin{proof}
 The set considered in the proof of Corollary \ref{majortnotuniform} works.
 \end{proof}
 \subsection{Question arising from Jockusch's work}
 Now we address one question that arises from the Jockusch's work. In \cite{jock}, he shows that any finite union of uniformly introreducible sets of the same Turing degree is introreducible---it is a corollary to \cite[Theorem 3.6]{jock}, one of the main results in the paper:
  \begin{thrm}[Jockusch's Theorem]\label{jockmain}
 Assume $A_0,\dots,A_n$ are infinite sets such that $A_1$ is uniformly introenumerable and $A_1,\dots,A_n$ are uniformly introreducible. For each $C\subseteq\bigcup_{i=0}^nA_i$, $A_0$ is c.e.\ in $C$ provided that $C$ has infinitely many elements from $A_0\setminus (A_1\cup\cdots\cup A_n)$. In particular, by induction, any infinite $C\subseteq \bigcup_{i=0}^n A_i$ can enumerate some $A_i$.
 \end{thrm}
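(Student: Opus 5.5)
The plan is to read the hypothesis as intended: $A_0$ is uniformly introenumerable, say via an enumeration operator $\Theta$ with $\Theta(X)=A_0$ for all $X\in[A_0]^\omega$, and each $A_i$ with $1\le i\le n$ is uniformly introreducible via a Turing functional $\Phi_i$. Since the conclusion ``$A_0$ is c.e.\ in $C$'' is non-uniform, we may use finitely many parameters. Fix $C\subseteq\bigcup_{i\le n}A_i$ with $C\cap(A_0\setminus(A_1\cup\cdots\cup A_n))$ infinite. Let $I=\{i\ge 1: C\cap A_i \text{ is infinite}\}$. Let $F_0$ be the finite set of elements of $C$ lying in some $A_i$ with $i\in\{1,\dots,n\}\setminus I$.

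The key reduction is the following. Put $D=(C\setminus F_0)\setminus\bigcup_{i\in I}A_i$. Every element of $D$ lies in $A_0$. Moreover, $D$ contains all but finitely many elements of $C\cap(A_0\setminus(A_1\cup\cdots\cup A_n))$, so $D\in[A_0]^\omega$ and $\Theta(D)=A_0$. It therefore suffices to show that $D$ is c.e.\ in $C$. For that it is enough that $C\cap\bigcup_{i\in I}A_i$ be co-c.e.\ (better, computable) relative to $C$. Then $A_0=\{m:\exists \text{ finite }F\subseteq D,\ m\in\Theta(F)\}$ is $\Sigma^0_1$ in $C$, using the finite parameter $F_0$. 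The whole proof thus comes down to showing that $C$ can recognize, at least co-c.e.-ly, which of its elements fall into $\bigcup_{i\in I}A_i$.

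For this step I would argue by induction on $|I|$, using the functionals $\Phi_i$. For $i\in I$, any infinite $X\subseteq C\cap A_i$ satisfies $\Phi_i^X=A_i$. So it suffices for $C$ to produce, non-uniformly, one such infinite $X_i$ that is $C$-c.e.\ (an infinite $C$-c.e.\ set has an infinite $C$-computable subset, and we may pass to it). Given such an $X_i$, $\Phi_i^{X_i}=A_i$ is $C$-computable, and membership of elements of $C$ in $A_i$ is decided. To find $X_i$, I would apply the induction hypothesis with the roles permuted. The family consisting of $A_i$ together with the remaining uniformly introreducible sets satisfies the hypotheses of the theorem, since uniformly introreducible implies uniformly introenumerable. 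If $C$ has infinitely many elements of $A_i\setminus\bigcup_{j\ne i}A_j$, the smaller instance yields that $A_i$ is c.e.\ in $C$. Then $C\cap A_i$ is an infinite $C$-c.e.\ subset of $A_i$, and via $\Phi_i$ the set $A_i$ is even $C$-computable. If instead almost all of $C\cap A_i$ also lies in other $A_j$'s, one can discard $A_i$ and pass to a union with fewer pieces. This discarding is what makes the induction on the number of pieces go through.

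The main obstacle is exactly this self-referential step. To certify $x\in A_i$ we want to run $\Phi_i$ on a subset of $A_i$, but we cannot yet separate $C\cap A_i$ from the rest of $C$. Running $\Phi_i^C$ directly is meaningless, since $C\not\subseteq A_i$. My first attempt would be a finite-injury/compactness search in the spirit of Lemma \ref{tree}. One searches for finite sets $\sigma\subseteq C$ on which the $\Phi_i$, for $i\in I$, give mutually consistent answers. One then uses the uniformity of each $\Phi_i$ on all infinite subsets of $A_i$ to show that any inconsistency is eventually detected, so that a wrong guess is refuted after finitely many stages. If that fails, I would fall back on organising the induction so that at each level one piece of $C$ is shown to be $C$-computable outright, and is then removed before proceeding.
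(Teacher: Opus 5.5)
\textbf{There is a genuine gap.} Your whole argument rests on one step: that $C$ can separate off $C\cap\bigcup_{i\in I}A_i$, because each $A_i$ with $i\in I$ is $C$-computable via an infinite $C$-c.e.\ subset of $C\cap A_i$. This step is false in general, even when $A_i$ cannot be discarded.

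Let $R$ be a noncomputable co-c.e.\ retraceable set (Dekker--Myhill). Put $A_0=\overline{R}$, which is c.e.\ and hence uniformly introenumerable by an oracle-free operator. Put $A_1=R$ and $C=\omega$.
\begin{itemize}
\item All hypotheses hold, and the conclusion ($\overline{R}$ is c.e.) is trivial.
\item $C\cap(A_1\setminus A_0)=R$ is infinite, so $I=\{1\}$ and $A_1$ cannot be discarded.
\item Yet $R\not\le_T C$, and $C\cap A_1=R$ has no infinite $C$-c.e.\ subset.
\end{itemize}
Your permuted induction would in fact conclude that $R$ is c.e. The theorem is asymmetric. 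Making $A_1$ the distinguished set puts $A_0=\overline{R}$ among the sets required to be uniformly introreducible, which it is not. Dropping $A_0$ instead leaves $C\not\subseteq\bigcup_{j\ge 1}A_j$, and passing to $C\cap\bigcup_{j\ge1}A_j$ presupposes the very separation you are trying to prove. The ``compactness search'' is left unspecified exactly where the work lies. The fallback (making one piece of $C$ computable outright) fails on the same example, since neither $R$ nor $\overline{R}$ is computable.

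\textbf{The missing idea.} Separating information of this kind is only available under the assumption that $A_0$ is \emph{not} $C$-c.e., so the argument must go by contradiction. The paper does not reprove Jockusch's Theorem, but its proof of Theorem~\ref{decomposition} reuses his core idea. For $n=1$ it runs as follows.

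Assume $A_0$ is not $C$-c.e. Let $T$ be the $C$-c.e.\ propositional theory with these axioms:
\begin{itemize}
\item $\bigwedge_{l\in E}A_0(l)\rightarrow A_0(k)$ for $k\in\Theta(E)$;
\item $\bigwedge_{l\in E}A_1(l)\rightarrow A_1(k)$ (resp.\ $\rightarrow\neg A_1(k)$) whenever $\Phi_1$, run on the characteristic string of $E$ up to some $\ell>\max E$, outputs $1$ (resp.\ $0$) at $k$. These are sound because $E\cup(A_1\cap[\ell,\infty))\in[A_1]^\omega$ when $E\subseteq A_1$;
\item $A_0(k)\vee A_1(k)$ for $k\in C$.
\end{itemize}
Fix $m\in A_0$ with $T\nvdash A_0(m)$. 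Fix a finite $F\subseteq C\cap A_1$ that is maximal with $T\cup\{A_1(l):l\in F\}\nvdash A_0(m)$. Such an $F$ exists: along an infinite chain the facts would decide $A_1$, hence prove $A_0(k)$ for $k\in C\cap(A_0\setminus A_1)$, hence prove $A_0(m)$ via $\Theta$. By compactness this would already happen at a finite stage.

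Now set $S=\{j\in C: T\cup F\cup\{A_1(j)\}\vdash A_0(m)\}$.
\begin{itemize}
\item $S$ is $C$-c.e.\ and contains $(C\cap A_1)\setminus F$. We may assume $C\cap A_1$ is infinite; otherwise $C\setminus A_1$ is a $C$-computable infinite subset of $A_0$ and we are done.
\item $S\cap A_0$ is finite. Otherwise some finite $E\subseteq S\cap A_0$ has $m\in\Theta(E)$. Under $\neg A_0(m)$, every $j\in E$ satisfies $\neg A_1(j)$ and hence $A_0(j)$, so $T\cup F\vdash A_0(m)$, contradicting the choice of $F$.
\end{itemize}
So $S\subseteq^* A_1$. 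Hence $A_1\le_T C$, and $A_0=\Theta(C\setminus A_1)$ is $C$-c.e., a contradiction.

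For general $n$, the same device is combined with an induction on the number of pieces. For each $j\in C$, count the indices $k$ for which `$j\in A_k$' together with a maximal special formula proves $A_0(m)$, as in the proof of Theorem~\ref{decomposition}. This produces an infinite $C$-c.e.\ set inside a union of fewer pieces, to which the induction hypothesis applies. The computability of $A_i$ from $C$ that your plan needs thus appears only as a consequence of the contradiction hypothesis, never as an unconditional step.
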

 We will refer to this theorem as Jockusch's Theorem.
 \begin{cor}[Jockusch]\label{jockcor}
If $A_0,\dots, A_n$ are uniformly introreducible sets of the same Turing degree, then the union $\bigcup_{i=0}^nA_i$ is introreducible. Moreover, the union also has the same Turing degree.
 \end{cor}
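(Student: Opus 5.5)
Let $A=\bigcup_{i=0}^n A_i$, and let $C\in[A]^\omega$. The plan is to get that $C$ enumerates some $A_i$ from Jockusch's Theorem, and then upgrade enumeration to computation using the common Turing degree. For the first part, note that each $A_i$ is uniformly introreducible, so the hypotheses of Theorem \ref{jockmain} hold for any ordering of the $A_i$'s. Every uniformly introreducible set is uniformly introenumerable: if $\Phi^X=A_i$ for all $X\in[A_i]^\omega$, then the operator enumerating the $n$ with $\Phi^X(n)=1$ witnesses this. Since $C$ is infinite and $C\subseteq\bigcup_i A_i$, pigeonhole gives that some index $i$ satisfies one of two cases. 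Either $C$ has infinitely many elements of $A_i$ outside the other $A_j$'s, or we can shrink the family. Concretely, I would induct on $n$. If $C$ has infinitely many elements of $A_0\setminus(A_1\cup\dots\cup A_n)$, reorder so $A_0$ plays the role of $A_0$ in Jockusch's Theorem, and conclude $A_0$ is c.e.\ in $C$. Otherwise $C$ minus a finite set lies in $A_1\cup\dots\cup A_n$, and finite modifications do not affect c.e.-ness, so the induction hypothesis applies. This is the ``in particular'' clause of the theorem; I will just cite it.

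It remains to turn ``$A_i$ is c.e.\ in $C$'' into ``$C\geq_T A$.'' First, find some $i$ for which $C\cap A_i$ is infinite and $C$ computes $A_i$. My first attempt is this: pick $i$ with $C\cap A_i$ infinite. By uniform introreducibility, $A_i\leq_T C\cap A_i$, but $C\cap A_i$ need not be $C$-computable, so this alone fails. Instead, I would use the enumeration more carefully. Apply the argument to $C_i:=C\cap A_i$ only after knowing $C$ can compute $A_i$, and get that via the complement. Since all $A_j$ have the same degree, $\overline{A_i}\equiv_T A_i$, but that does not give $C$-c.e.-ness of the complement either.

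The cleanest route I expect to work is Jockusch's Theorem applied with $A_0$ being the relevant $A_i$. If $C$ has infinitely many elements from $A_i\setminus\bigcup_{j\ne i}A_j$, then $A_i$ is c.e.\ in $C$, and also in every infinite subset of that part. Then $A_i$ being uniformly introreducible gives that $A_i$ is computable from any infinite subset of $A_i$. From $C$ and the c.e.-in-$C$ enumeration of $A_i$, we can $C$-computably list an infinite subset of $C\cap A_i$: enumerate $A_i$ relative to $C$ and output elements appearing in both, in increasing order of discovery, keeping only ones larger than previously output. This gives a $C$-computable infinite subset $D$ of $A_i$, an infinite set listed increasingly and hence computable. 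Then $A_i=\Phi^D\leq_T C$. Since all $A_j\equiv_T A_i$, $C$ computes every $A_j$, hence their union $A$. The same argument in the inductive case yields some $A_i$ with $C$ computing it.

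The degree claim is easy from here. $A\leq_T A_0\oplus\dots\oplus A_n\equiv_T A_0$, and $A$ computes $A_0$ by the introreducibility just shown, applied to $C=A$. The main obstacle I anticipate is ensuring that $C\cap A_i$ is infinite for the index $i$ produced by Jockusch's Theorem. The ``in particular'' induction does produce such an $i$, since enumeration happens exactly when $C$ meets $A_i\setminus\bigcup_{j>\cdot}A_j$ infinitely. I would check that step carefully.
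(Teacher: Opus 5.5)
Your argument is correct and is essentially the paper's proof: Jockusch's Theorem makes some $A_i$ c.e.\ in $C$, so $C$ computes an infinite subset of $A_i$, introreducibility of $A_i$ gives $A_i\leq_T C$, the common degree then yields $A\leq_T C$, and the degree claim follows the same way. The obstacle you anticipate about $C\cap A_i$ being infinite is unnecessary, though your resolution of it is valid. Since $A_i$ is itself infinite and c.e.\ in $C$, $C$ computes an infinite subset of $A_i$ directly, which is exactly what the paper uses.
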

 \begin{proof}
 Let $A$ be the union $\bigcup_{i=0}^nA_i$ and consider an infinite subset $C\in[A]^\omega$. By Jockusch's Theorem, $C$ can enumerate some $A_i$, and hence compute an infinite subset $C_0\in[A_i]^\omega$. By the introreducibility of $A_i$, this implies that $C$ can compute $A_i$, which can compute $A_0,\dots,A_n$, and hence $A$. To see that $A \equiv_T A_0$, $A\leq_T A_0$ is clear. And we just showed that any infinite subset of $A$ can compute some $A_i$, which has the same Turing degree with $A_0$.
 \end{proof} 
 The question is whether the converse is true or not.
 \begin{quest}\ 
 \begin{enumerate}
 \item Is every introreducible set a union of finitely many uniformly introreducible sets?
 \item Can you further assume that all of those sets have the same Turing degree?
 \end{enumerate}
\end{quest}
We make a relevant observation showing that the second item comes for free if the first item is true, i.e., it is that either every introreducible set is a union of finitely many uniformly introreducible sets of the same Turing degree or there is an introreducible set which cannot be a union of finitely many uniformly introreducible sets. Consider an infinite set $A$, and assume that $A$ is a union of finitely many sets $A_0,\dots,A_n$, all of which are uniformly introreducible. Certainly, if the union $\bigcup_{i=09}^n A_i$ is a nontrivial union, i.e.\ each $A_i\setminus(\bigcup_{j\neq i}A_j)$ is infinite, then each $A_i$ is c.e.\ in $A$ by Jockusch's Theorem, so that $A$ can compute an infinite subset of each $A_i$. Then by the introreducibility of $A_i$, it follows that $A_i\leq_T A$ for each $i$. If we further assume that $A$ is introreducible, then $A\leq_T A_i$ as well, so that we have $A_i\equiv_T A$ for each $i$. Hence, $A$ is actually a union of the finitely many uniformly introreducible sets, all of those sets having the same Turing degree.

Now, assume that our infinite set $A$ is the union $\bigcup_{i=0}^n A_i$ where $A_i$'s are uniformly introreducible, but the union is not necessarily a nontrivial union. We claim that we can find uniformly introtreducible sets $D_0,\dots,D_m\subseteq A$ with $A=\bigcup_{i\leq m}D_i$ being a nontrivial union and $A\equiv_T D_i$ for $i\leq m$. If $A_0\setminus (\bigcup_{i\neq 0,i\leq n}A_i)$ is a finite set, then add those finitely many elements of $A_0\setminus (\bigcup_{i\neq 0,i\leq n}A_i)$ to $A_1$ and disregard $A_0$. Then reindex the sets by $A_0=A_1,A_1=A_2,\dots,A_{n-1}=A_n$. Otherwise, if $A_0\setminus (\bigcup_{i\neq 0,i\leq n}A_i)$ is an infinite set, then let $C_0:=A_0$. Having defined $C_k$ and having $C_0,\dots,C_k,A_{k+1},\dots,A_l$ for some $l$, check whether $A_{k+1}\setminus [(\bigcup_{i\leq k}C_i)\cup(\bigcup_{k+1<i\leq l}A_i)]$ is finite or not. If finite, then add those finitely many elements to $A_{k+2}$ and reindex the remaining $A_i$'s as $A_{k+1}=A_{k+2},\dots,A_{l-1}=A_l$. If infinite, then let $C_{k+1}=A_{k+1}$. Note that it is possible to have only $C_0$ defined at the end: for instance, we could have $A_0=\omega$ and $A_1=\omega\setminus \{0\}$ where $A=\omega$. In this case, we would disregard $A_0$, add $0$ to $A_1$, reindex $A_1\cup\{0\}$ as $A_0$, then let $C_0=A_0=\omega$. Also note that it is possible to have some elements of $A$ not in any of the $C_i$'s at the end: we could have $A_0=\{\text{evens}\}\setminus \{0\}$, $A_1=\{\text{odds}\}$, and $A_2=\{0\}\cup\{\text{odds}\}\setminus \{4n+1:n\in\omega\}$ where $A=\omega$. In this case, we would have at the end $C_0=A_0$, $C_1=A_1$, and $C_2$ is undefined, so that $0$ is not in $C_0\cup C_1$.
 
 Hence, if we proceed in the above way, then we end up with $C_0,\dots,C_m$ for some $m$ where $A\setminus (\bigcup_{i\leq m}C_i)$ is a finite set. Noting that a finite adjustment of a uniformly introreducible set is still uniformly introreducible, we may let $A=\bigcup_{i\leq N}D_i$ for some $N$ and $D_i$'s such that each $D_i\setminus(\bigcup_{j\neq i,j\leq N}D_j)$ is infinite and each $D_i$ is uniformly introreducible---e.g., we let $D_0=C_0,\dots,D_{m-1}=C_{m-1}$ and $D_m=C_m\cup (A\setminus (\bigcup_{i\leq m}C_i))$. Then apply Jockusch's Theorem to $A$ and each $D_i$ (in place of $C$ and $A_1$) to get that each $D_i$ is c.e.\ in $A$. By the introreducibility of $D_i$'s, this implies that $A$ computes each $D_i$. If we further assume that $A$ is introreducible, then $A\leq_T D_i$ for each $i$ as well.
 
 Hence, combining this observation with Corollary \ref{jockcor}, we get:
 \begin{cor}\label{finiteunion}
 Let $A=A_0\cup\cdots\cup A_n$, where each $A_i$ is an infinite set. Assume all $A_i$'s are uniformly introreducible. Then $A$ is introreducible if and only if there exists uniformly introreducible sets $D_0,\dots, D_m\subseteq A$ for some $m$ with $A=\bigcup_{i=0}^mD_i$ and $A\equiv_T D_i$ for each $i$. Note that for the `if' direction, assuming $D_0\equiv_T D_i$ for each $i$ suffices in place of $A\equiv_T D_i$ for each $i$ by Corollary \ref{jockcor}.
 \end{cor}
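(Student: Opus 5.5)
For the `only if' direction, I will follow the reduction sketched in the paragraphs just before the statement. Assume $A$ is introreducible and $A=A_0\cup\cdots\cup A_n$ with each $A_i$ infinite and uniformly introreducible.

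First I prune the family to a nontrivial union. I run through the $A_i$ in order. Whenever $A_{k}\setminus[(\bigcup_{i<k}C_i)\cup(\bigcup_{i>k}A_i)]$ is finite, I move those finitely many elements into a later set and discard $A_k$. Otherwise I keep $C_k=A_k$. This produces $C_0,\dots,C_m$ with $A\setminus\bigcup_{i\le m}C_i$ finite. I then set $D_i=C_i$ for $i<m$ and $D_m=C_m\cup(A\setminus\bigcup_{i\le m}C_i)$.

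Two facts need checking here. The first is that a finite modification of a uniformly introreducible set remains uniformly introreducible: the operator can hardcode the finitely many added or removed elements, and every infinite subset of the new set is, up to a finite set, an infinite subset of the old one. The second is that each $D_i\setminus\bigcup_{j\ne i}D_j$ is infinite. The step I expect to need the most care is this nontriviality: later deletions must not destroy the infinitude of private parts for earlier $C_i$. Discarding a set only removes a competitor and reassigning finitely many elements changes things finitely, so I will argue the private part of each kept $C_i$ can only grow up to finite differences.

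With the nontrivial union in hand, I apply Jockusch's Theorem (Theorem \ref{jockmain}), reindexing so that the target $D_i$ plays the role of $A_0$ and the others the role of $A_1,\dots$. Take $C=A$, which contains infinitely many elements of $D_i\setminus\bigcup_{j\ne i}D_j$. Then $D_i$ is c.e.\ in $A$. Hence $A$ computes an infinite subset of $D_i$, and by introreducibility of $D_i$ we get $D_i\le_T A$. Conversely, $D_i\subseteq A$ is an infinite subset of $A$, so introreducibility of $A$ gives $A\le_T D_i$. Therefore $A\equiv_T D_i$ for every $i$.

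For the `if' direction, suppose $A=\bigcup_{i\le m}D_i$ with each $D_i$ uniformly introreducible and $D_i\equiv_T A$; it suffices even that $D_0\equiv_T D_i$ for all $i$. These $D_i$ are then uniformly introreducible sets of the same Turing degree, so Corollary \ref{jockcor} applies directly and shows that $A$ is introreducible.
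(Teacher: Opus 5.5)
Your proposal is correct and follows the paper's own argument. You prune $A_0,\dots,A_n$ to a nontrivial union by finite adjustments (absorbing the finite leftover into $D_m$), apply Jockusch's Theorem with $C=A$ and each $D_i$ as the target to get $D_i\le_T A$, use the introreducibility of $A$ to get $A\le_T D_i$, and cite Corollary \ref{jockcor} for the converse. Your extra checks fill in details the paper leaves implicit: that finite modifications preserve uniform introreducibility, and that a kept set's private part stays infinite (in fact it never shrinks, since moved elements lie outside every other set).
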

 Thus, it is natural to ask whether every introreducible set is a union of finitely many uniformly introreducible sets, all of them having the same Turing degree. A very strong assertion, that every introreducible set is uniformly introreducible, is shown to be false by Lachlan's Construction. An interesting point to note is that the set Lachlan constructs, although not uniformly introreducible, is still a union of two uniformly introreducible sets; in fact a disjoint union.
 
 \subsection{Building an introenumerable set}There are many more interesting related questions arising from the following theorem; we note that the core idea of the proof comes from the proof of Jockusch's Theorem.
 \begin{thrm}\label{decomposition}
 If $A,B_0,\dots,B_n$ are infinite sets such that $A\leq_{e}^{ui} B_i$ for each $i\leq n$, then $A\leq_e^i \bigcup_{i=0}^nB_i$.
 \end{thrm}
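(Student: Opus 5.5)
The plan is to argue by induction on $n$, with $n=0$ immediate. For the induction step, fix operators $\Theta_0,\dots,\Theta_n$ with $\Theta_i(X)=A$ for every $X\in[B_i]^\omega$, and fix $C\in[\bigcup_{i\le n}B_i]^\omega$. If $C\cap B_i$ is finite for some $i$, then $C$ minus a finite set lies in $\bigcup_{j\neq i}B_j$. Finite modifications do not affect being enumeration above $A$, so the induction hypothesis finishes this case, and this is where non-uniformity enters. So assume $C\cap B_i$ is infinite for every $i$. The basic fact I will use is this: if $D\subseteq B_i$ is finite, then $\Theta_i(D)\subseteq\Theta_i(B_i)=A$ by monotonicity of enumeration operators.

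Following the idea of Jockusch's Theorem, I would enumerate $m$ from $C$ when there is a finite $D\subseteq C$ such that for \emph{every} cover $D=D_0\cup\cdots\cup D_n$ there is some $i$ with $m\in\Theta_i(D_i)$. This condition is c.e.\ in $C$, and indeed is given by an enumeration operator, since $D$ ranges over finite subsets of $C$ and covers of $D$ form a finite check.

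Soundness is easy. Take the "true" cover $D_i\subseteq D\cap B_i$, assigning each element of $D$ to some $B_i$ containing it. This produces an $i$ with $m\in\Theta_i(D_i)\subseteq A$.

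Completeness is the main obstacle. Suppose $m\in A$ but no such $D$ exists. Then every finite $D\subseteq C$ has a bad cover, that is, one in which $m\notin\Theta_i(D_i)$ for all $i$. Bad covers restrict to bad covers of subsets because the $\Theta_i$ are monotone. So König's lemma on the finitely branching tree of covers of initial segments of $C$ yields a partition $C=X_0\cup\cdots\cup X_n$ with $m\notin\Theta_i(X_i)$ for every $i$. If some $X_i\cap B_i$ is infinite, then $\Theta_i(X_i\cap B_i)=A\ni m$, a contradiction.

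The difficulty is that the bad partition may be "misaligned": every $X_i\cap B_i$ may be finite, with the mass of $X_i$ sitting in $B_j$ for $j\ne i$. My first remedy is to strengthen the enumeration condition using the non-uniform information available in the induction. Whenever $\Theta_i(D_i)\not\subseteq A$, the set $D_i$ is not contained in $B_i$, so only covers consistent with $\Theta_i(D_i)\subseteq A$ should count. In a misaligned bad partition, the infinite pieces $X_i\cap B_j$ ($j\ne i$) feed $\Theta_i$ with inputs from the wrong set.

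For the refined argument I would run the compactness argument only over covers whose $i$-th piece never makes $\Theta_i$ enumerate a fixed, finitely specified witness outside $A$ that has already been guessed. Alternatively, I would fall back on the induction hypothesis applied to the pieces $X_i\cap B_j$: each lies in fewer than $n+1$ of the sets once we throw away one index. Either route should show that some $\Theta_i$ is forced to emit $m$ on an infinite subset of $B_i\cap C$ that is compatible with the bad partition, yielding the contradiction. Getting this alignment step right, presumably by mimicking the case analysis in the proof of Jockusch's Theorem, is where I expect the real work to lie.
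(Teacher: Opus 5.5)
There is a genuine gap, and it is the completeness step you flag. Worse, the completeness claim is false, so no sharper compactness argument can rescue a single enumeration of this kind. Take any infinite $A$, put $B_t=\{\langle t,\sigma\rangle:\sigma\prec A\}$ for $t=0,1$, and let $\Theta_t$ output $\{k:\sigma(k)=1\}$ on $\langle t,\sigma\rangle$ and nothing on elements with the other tag. Then $A\leq^{ui}_e B_t$ via $\Theta_t$, and $C=B_0\cup B_1$ meets both sets infinitely. Yet for every finite $D\subseteq C$, the misaligned cover $D_0=D\cap B_1$, $D_1=D\cap B_0$ gives $\Theta_0(D_0)=\Theta_1(D_1)=\emptyset$, so your procedure enumerates nothing. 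Your remedy (a) does not exclude this cover, since the cover produces no outputs at all, let alone false ones. Remedy (b) also fails: the pieces $X_i\cap B_j$ come from a K\"onig path and from the $B_j$, and neither is available from $C$. The induction hypothesis only helps on an infinite subset of $C$ that is itself $C$-c.e.

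By compactness, your condition says exactly that $T\vdash A(m)=1$ for the paper's $C$-c.e.\ theory $T$. Its axioms are $E\subseteq B_i\rightarrow A(k)$ for $k\in\Theta_i(E)$, and $B_0(k)\vee\cdots\vee B_n(k)$ for $k\in C$. Your soundness argument is the paper's. The missing idea is what to do with one $m\in A$ such that $T\nvdash A(m)=1$. Do not seek a contradiction directly. Instead, extract, non-uniformly, an infinite $C$-c.e.\ set $S\subseteq C$ lying inside the union of at most $n$ of the $B_i$, and apply the induction hypothesis to an infinite $C$-computable subset of $S$.

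Here is how. Call a finite \emph{true} assignment $\varphi$ of elements of $C$ to the $B_i$ special if $T\nvdash\varphi\rightarrow A(m)=1$. Every chain of properly extending special assignments is finite: otherwise some $B_j$ receives infinitely many elements, and $\Theta_j$ yields $m$ from finitely many of them. So fix a maximal special $\varphi$. For $j\in C$, let $p(j)$ count the $k$ with $T\vdash(\varphi\wedge B_k(j))\rightarrow A(m)=1$; this is a $C$-c.e.\ relation. Maximality gives $p(j)\geq 1$ for every $j$ not mentioned in $\varphi$ (use a true $k$). The disjunction axiom gives $p(j)\leq n$, since otherwise $\varphi$ alone would prove $A(m)=1$.

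Let $J$ be the largest value taken infinitely often. Let $K$ be a $J$-element index set that is the counting set for infinitely many $j$. Let $s$ exceed everything mentioned in $\varphi$, with $p(j)\leq J$ for all $j\geq s$ in $C$. Then
\[S=\{j\in C: j\geq s\text{ and }T\vdash(\varphi\wedge B_k(j))\rightarrow A(m)=1\text{ for all }k\in K\}\]
is infinite, $C$-c.e., and contained in $\bigcup_{k\in K}B_k$. Indeed, a true index $l\notin K$ for $j$ would also qualify by maximality of $\varphi$ and push $p(j)$ above $J$. This also makes your separate case with some $C\cap B_i$ finite unnecessary.
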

 \begin{proof}
 The proof is done by induction on $n$---the base case $n=0$ is trivial. Assume the theorem works for $n-1$, and we prove it for $n$. Let $\Theta_i$ be an enumeration operator witnessing $A\leq_e^{ui}B_i$ for each $i\leq n$. Let $C\subseteq \bigcup_{i=0}^nB_i$ be an infinite subset and assume that $A$ is not $C$-c.e.\ for a contradiction. Consider the theory $T$ in an appropriate relational language where the axioms of $T$ say
\begin{itemize}
 \item $E\subseteq B_i\rimp A(k)=1$ (for each $E,k$ such that $k\in \Theta_i(E)$);
 \item $B_0(k)=1\vee\cdots\vee B_n(k)=1$ (for each $k\in C$).
\end{itemize}
Clearly, $A,B_0,\dots,B_n$ in the assumption with the natural interpretation model $T$, so that $\{n:T\proves A(n)=1\}\subseteq A$. And by the assumption, since $T$ is a $C$-c.e.\ theory, there exists $m\in A$ such that $T\nproves A(m)=1$. Say a formula $\phii$ is \emph{special} if it is a true conjunction of statements of the form `$l\in B_i$' for $l\in C$ such that $T\nproves (\phii\rimp A(m)=1)$; e.g., if $1,3\in C$, $1\in B_0, 3\in B_2$, and $T\nproves (B_0(1)=1\wedge B_2(3)=1\rimp A(m)=1)$, then $\phii:=(B_0(1)=1\wedge B_2(3)=1)$ is a special formula. For formulas $\phii$ and $\psi$, we say $\psi$ is a proper special extension of $\phii$ if both $\phii$ and $\psi$ are special formulas, and the statements occurring in $\psi$ form a proper super set of the set of those occurring in $\phii$. Note that our assumption tells that the empty formula is special.

Note that we cannot have an infinite sequence of properly extending special formulas. For if $\phii_0,\phii_1,\dots$ are properly extending special formulas, then there is some $j\leq n$ such that the statements occurring in $\phii_i$'s give an infinite subset of $B_j$. Since $A\leq^{ui}_e B_j$ via $\Theta_j$, there must exist some number $N$ such that $T\cup\{\phii_N\}\proves A(m)=1$. Hence, we may let $\phii$ be a special formula which has no proper special extension.

For each $k,j\in\omega$, let $\phii_{k,j}$ be the formula `$(B_k(j)=1\wedge\phii)\rimp A(m)=1$'. If $j\in B_k\cap C$ and `$B_k(j)=1$' is not occurring in $\phii$, then $T\proves\phii_{k,j}$, as otherwise, $\phii_{k,j}$ would be a proper special extension of $\phii$. For each fixed $j\in C$, let $p(j)$ be the number of $k$'s such that $T\proves \phii_{k,j}$. So $p(j)\geq 1$ for all but finitely many $j$ in $C$. Moreover, $p(j)<n$ for each $j\in C$. For if $j\in C$ and $p(j)=n$, then since `$B_0(j)=1\vee\cdots\vee B_n(j)=1$' is an axiom of $T$, $T\proves(\phii_{0,j}\wedge\cdots\wedge \phii_{n,j})$ implies that $T\proves(\phii\rimp A(m)=1)$, which contradicts that $\phii$ is special. Let $J\in\omega$ be the largest number such that there are infinitely many $j\in C$ with $p(j)=J$. Let $\CB$ be a collection of $J$-many $B_k$'s such that there are infinitely many $j\in C$ such that $T\proves\phii_{k,j}$ for each $k$ with $B_k\in\CB$. Let $s$ be large enough so that $p(j)\leq J$ for all $j\in C$ with $j\geq s$ and that $s$ is larger than all statements occurring in $\phii$. Define
$$S=\{j:j\geq s\hs\&\hs j\in C\hs\&\hs T\proves\phii_{k,j}\text{ for each }k\text{ with }B_k\in\CB\}.$$
Clearly, $S$ is an infinite subset of $\bigcup\CB$: if $j\in S$ and $j\notin \bigcup\CB$, then let $j\in B_l$ where $B_l\notin\CB$. Then it follows that $T\proves \phii_{l,j}$, contradicting the choice of $s$.

Note that $S$ is a $C$-c.e.\ set. Hence, we may let $S_0\subseteq S$ be an infinite $C$-computable subset. By the induction hypothesis, as $\CB$ contains $<n$ many sets, $A$ is $S_0$-c.e.\ and thus $A$ is $C$-c.e.\ contradicting our assumption.
 \end{proof}
 \begin{cor}\label{4.12}
 If $A,B_0,\dots,B_n$ are infinite sets such that $A\leq_T^{ui}B_i$ for each $i\leq n$, then $A\leq_T^i\bigcup_{i=0}^nB_i$.
 \end{cor}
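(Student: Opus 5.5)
The plan is to reduce Corollary \ref{4.12} to Theorem \ref{decomposition}, using the standard passage from uniform Turing reductions to uniform enumeration reductions of $A\oplus\ol{A}$. For each $i\le n$, let $\Phi_i$ be a Turing operator with $\Phi_i^{X}=A$ for all $X\in[B_i]^\omega$. Write $A^\ast=A\oplus\ol{A}$, i.e.\ the set $\{2k:k\in A\}\cup\{2k+1:k\notin A\}$.

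First, I will show $A^\ast\leq^{ui}_e B_i$ for each $i$. Define the enumeration operator $\Theta_i$ to consist of all pairs $\<u,2k\>$ such that $\Phi_{i}^{\sigma}(k)\downarrow=1$, where $\sigma$ is the characteristic string of the finite set $D_u$ (of length $\max D_u+1$). Likewise it contains $\<u,2k+1\>$ whenever $\Phi_i^\sigma(k)\downarrow=0$. This family of pairs is c.e.

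The point needing care is that enumeration only sees positive information. A finite $D_u\subseteq X$ does not mean $X$ agrees with $\sigma$ on negative bits. However, $D_u$ is a finite subset of $B_i$, and I will work with sets that are themselves subsets of $B_i$. So I instead argue as follows. For any finite $F\subseteq B_i$, the set $F\cup Y$ is an infinite subset of $B_i$ for any infinite $Y\subseteq B_i$ with $\min Y>\max F$. In particular, the set $X'$ with characteristic string $\sigma$ below $|\sigma|$ and agreeing with some infinite subset of $B_i$ above it lies in $[B_i]^\omega$. Hence if $\Phi_i^\sigma(k)\downarrow$, its value equals $A(k)$ by the use principle. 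So every pair enumerated from $D_u\subseteq X$ gives correct information, which yields $\Theta_i(X)\subseteq A^\ast$. Conversely, for $X\in[B_i]^\omega$, the computation $\Phi_i^X(k)$ converges with some initial segment $\sigma$ of $X$. Taking $D_u$ equal to $X\cap|\sigma|$, the corresponding pair is in $\Theta_i$, so $\Theta_i(X)\supseteq A^\ast$. Thus $\Theta_i$ witnesses $A^\ast\leq^{ui}_e B_i$. I expect this correctness check to be the only subtle step, since it is where we use that all finite subsets of $B_i$ extend to infinite ones.

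Then I apply Theorem \ref{decomposition} to $A^\ast,B_0,\dots,B_n$, noting $A^\ast$ is infinite. This gives $A^\ast\leq^i_e\bigcup_{i\le n}B_i$: every $C\in[\bigcup_iB_i]^\omega$ satisfies $A\oplus\ol{A}\leq_e C$. It follows that both $A$ and $\ol{A}$ are $C$-c.e. Enumeration reducibility to $C$ implies being c.e.\ in $C$, since $C$ computes its own positive information. Hence $A\leq_T C$ for every such $C$, i.e.\ $A\leq^i_T\bigcup_{i=0}^nB_i$.
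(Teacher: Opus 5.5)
This is essentially the paper's argument: it also converts $A\leq^{ui}_T B_i$ into uniform enumeration reductions of $A$ and $\ol{A}$ and then invokes Theorem \ref{decomposition}. Packaging them as the single, automatically infinite join $A\oplus\ol{A}$ is only a cosmetic variant, and it does neatly sidestep the theorem's hypothesis that the set on the left be infinite. There is one small slip to repair. In showing $\Theta_i(X)\supseteq A\oplus\ol{A}$, the choice $D_u=X\cap|\sigma|$ can yield a characteristic string of length $\max D_u+1$ that is strictly shorter than the use $\sigma$, and $\Phi_i$ need not converge on it. Take instead $D_u=X\cap[0,x]$ for some $x\in X$ with $x\geq|\sigma|$, which exists because $X$ is infinite.
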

 \begin{proof}
 Recall that $\leq^{ui}_{c.e.}$ and $\leq^{ui}_e$ are equivalent. Also, note that $X\leq_T^{ui}Y$ if and only if $X\leq_{c.e.}^{ui} Y$ and $\ol{X}\leq_{c.e.}^{ui}Y$, and $X\leq_T^{i}Y$ if and only if $X\leq_{c.e.}^{i} Y$ and $\ol{X}\leq_{c.e.}^{i}Y$. So the corollary follows directly from Theorem \ref{decomposition} and the two facts just noted.
 \end{proof}
\begin{cor}
 Assume $B=B_0\cup\cdots\cup B_n$ where $B_0,\dots,B_n$ are uniformly introenumerable sets. If $B_i$'s have the same enumeration degree, then $B$ is introenumerable.
 \end{cor}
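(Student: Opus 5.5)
The plan is to mimic the proof of Corollary \ref{jockcor}, with enumeration reducibility in place of Turing reducibility and Theorem \ref{decomposition} in place of Jockusch's Theorem. Let $C\in[B]^\omega$ be arbitrary. We must show that $B\leq_e C$.

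First we show that $B_0\leq_e B_i$ for every $i\leq n$. Since the $B_i$'s all have the same enumeration degree, each $B_i$ satisfies $B_0\leq_e B_i$. Each $B_i$ is uniformly introenumerable, so $B_i\leq^{ui}_e B_i$ via some operator $\Theta_i$. If $\Psi_i$ witnesses $B_0\leq_e B_i$, then the composition $\Psi_i\circ\Theta_i$ is again an enumeration operator. For every $X\in[B_i]^\omega$ it gives $\Psi_i(\Theta_i(X))=\Psi_i(B_i)=B_0$. Hence $B_0\leq^{ui}_e B_i$ for each $i\leq n$, and the index $i=0$ is included.

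Next we apply Theorem \ref{decomposition} with $A:=B_0$ and the sets $B_0,\dots,B_n$. This gives $B_0\leq^i_e\bigcup_{i=0}^n B_i=B$, so in particular $B_0\leq_e C$.

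Then we climb back up to $B$. The $B_i$'s have the same enumeration degree, so $B_i\leq_e B_0$ for every $i$. Enumeration reducibility is closed under finite joins and unions: if $B_i=\Phi_i(B_0)$, then $\bigcup_i\Phi_i$ is an enumeration operator sending $B_0$ to $\bigcup_i B_i=B$. Therefore $B\leq_e B_0\leq_e C$, and since $C$ was arbitrary, $B$ is introenumerable.

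The only step needing real care is the first one, upgrading $B_0\leq_e B_i$ to $B_0\leq^{ui}_e B_i$. It works precisely because uniform introenumerability of $B_i$ gives a single operator recovering $B_i$ from every infinite subset, and composition preserves that uniformity. We also need every $B_i$ to be infinite, as Theorem \ref{decomposition} requires; this holds because uniformly introenumerable sets are infinite by definition. Everything else is the transitivity of $\leq_e$ together with the observation that a finite union of enumeration operators is again an enumeration operator.
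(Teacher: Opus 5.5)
Your proof is correct and is essentially the paper's argument. The paper notes $B_j\leq^{ui}_e B_i$ for all $i,j$ (which is exactly your composition argument), takes the union to get $B\leq^{ui}_e B_i$, and applies Theorem \ref{decomposition} directly with $A=B$; you apply it with $A=B_0$ and pass to $B$ afterwards via $B\leq_e B_0$, which is only a harmless reordering of the same steps.
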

 \begin{proof}
 In this case, we have $B_j\leq^{ui}_e B_i$ for all $i,j\leq n$, and hence $\bigcup_{j=0}^nB_j\leq^{ui}_e B_i$ for all $i\leq n$. By Theorem \ref{decomposition}, we have $B\leq^i_e B$.
 \end{proof}
  The following takes $A$ in Theorem \ref{decomposition} equal to the union $\bigcup_{i\leq n}B_i$. This gives us a new method of building a nontrivial introenumerable or introreducible set.
 \begin{cor}\label{forcingtool}
 If $A,B_0,\dots,B_n$ are infinite sets such that $A=\bigcup_{i\leq n}B_i$ and $A\leq^{ui}_e B_i$ for each $i$, then $A$ is introenumerable. Likewise, if $A\leq_T^{ui}B_i$ for each $i$, then $A$ is introreducible.
 \end{cor}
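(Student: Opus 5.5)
This is a direct specialization of Theorem \ref{decomposition} and Corollary \ref{4.12}, taking the target set to be the union itself. For the first assertion, we apply Theorem \ref{decomposition} to the sets $A,B_0,\dots,B_n$ exactly as given. Its hypotheses are that the sets are infinite and that $A\leq^{ui}_e B_i$ for each $i\leq n$, and both are assumed here. It yields $A\leq^i_e\bigcup_{i\leq n}B_i$. Since $\bigcup_{i\leq n}B_i=A$, this reads $A\leq^i_e A$. Unwinding the definition, every $C\in[A]^\omega$ is enumeration above $A$, which is the definition of $A$ being introenumerable.

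For the second assertion, we argue the same way using Corollary \ref{4.12} in place of Theorem \ref{decomposition}. From $A\leq^{ui}_T B_i$ for every $i$, it gives $A\leq^i_T\bigcup_{i\leq n}B_i=A$. So every infinite subset of $A$ computes $A$, which means $A$ is introreducible.

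There is no real obstacle here; all the combinatorial work sits in the induction in Theorem \ref{decomposition}. The only thing to check is that substituting the union for $A$ is legitimate. Theorem \ref{decomposition} places no constraint linking $A$ to the $B_i$ beyond the uniform reductions, so taking $A=\bigcup_{i\leq n}B_i$ is allowed. Similarly, Corollary \ref{4.12} reduces to Theorem \ref{decomposition} using the equivalence of $\leq^{ui}_e$ with $\leq^{ui}_{c.e.}$ and the splitting of Turing reductions into the c.e.\ reductions of $X$ and of $\ol{X}$, and neither step depends on the particular choice of $A$.
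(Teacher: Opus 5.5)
Your proposal is correct and matches the paper's argument: the paper also obtains this corollary by taking $A=\bigcup_{i\leq n}B_i$ in Theorem \ref{decomposition}, which gives $A\leq^i_e A$, and in Corollary \ref{4.12}, which gives $A\leq^i_T A$. These are exactly introenumerability and introreducibility of $A$.
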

 In fact, the following theorem shows that an introenumerable set gotten by Corollary \ref{forcingtool} is not necessarily uniformly introenumerable.
 \begin{thrm}\label{ienotuie forcing proof}
 There exists an introenumerable set which is not uniformly introenumerable, not introreducible, and not majorenumerable.
 \end{thrm}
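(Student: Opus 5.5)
The plan is to use Corollary \ref{forcingtool}. I will build $A=B_0\cup B_1$ as a disjoint union, with $A\leq^{ui}_e B_0$ via $\Theta_0$ and $A\leq^{ui}_e B_1$ via $\Theta_1$, where $\Theta_0\neq\Theta_1$ are fixed enumeration operators that read the same number in two incompatible ways. Then $A$ is introenumerable automatically. The three negative properties will be obtained by building $A$ as a sufficiently generic object for a forcing whose conditions are finite approximations.

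\emph{Coding.} Fix a computable assignment $x\mapsto(D_x,E_x)$ of a pair of finite sets to each number, with each pair attained by infinitely many $x$. Let $\Theta_0(C)=\bigcup_{x\in C}D_x\cup C$ and $\Theta_1(C)=\bigcup_{x\in C}E_x\cup C$. A condition is a triple $(F_0,F_1,N)$ with $F_0,F_1$ finite and disjoint and $N$ a finite set of promised non-members, subject to coherence:
\begin{itemize}
\item $x\in F_0$ implies $D_x\subseteq F_0\cup F_1$;
\item $x\in F_1$ implies $E_x\subseteq F_0\cup F_1$;
\item $N\cap(F_0\cup F_1)=\emptyset$.
\end{itemize}
Extension adds elements to each coordinate. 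I will also meet a density requirement: for every $a\in F_0\cup F_1$ and $i\leq 1$, later some $x$ with $a\in D_x$ (if $i=0$) or $a\in E_x$ (if $i=1$) enters $F_i$. Using fresh codes, this keeps any infinite $C\subseteq B_i$ enumerating $A$ via $\Theta_i$. Coherence then gives $\Theta_i(C)\subseteq A$ for every $C\subseteq B_i$, so genericity yields $A\leq^{ui}_e B_i$ for $i=0,1$.

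\emph{Negative requirements.} Non-majorenumerability follows once $A$ is not $\Pi^1_1$, by Corollary \ref{majorpi11}. Since the generic need not be effective, for each of the countably many $\Pi^1_1$ sets $P$ I extend the current condition so that $A$ and $P$ disagree at a fresh number. If that number is not in $P$, I put it in as a new code in $F_0$ with $D_x$ drawn from current members; if it is in $P$, I put it into $N$. Non-introreducibility should come from a Cohen-type argument. Take $C=B_0$, or a generic subset of it. Then $C$ sees only the $D$-readings, and the $E_x$ for $x\in B_0$ can be chosen freely to encode nothing. For each Turing functional $\Phi_e$, I extend so that $\Phi_e^{B_0}$ disagrees with $\chi_A$ on a fresh number. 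This uses the standard splitting dichotomy: either a fresh $m$ can be forced into $A$ via $B_1$-codes, invisible to $B_0$, or it can be kept out, whichever $\Phi_e$ fails to predict.

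\emph{Non-uniformity (main obstacle).} For each enumeration operator $\Gamma=\Gamma_e$ I must produce an infinite $C\subseteq A$ with $\Gamma(C)\neq A$. Mixed subsets of $B_0\cup B_1$ are the natural witnesses, since a single operator cannot tell which reading of a number is trustworthy. I will argue by dichotomy at a condition $p$. Suppose some extension $q$ and finite $F\subseteq F^q_0\cup F^q_1$ give $m\in\Gamma(F)$, where $m$ could be placed in $N$. Here $m$ is a fresh number appearing only through the wrong reading of codes in $F$, for instance $m\in E_x$ with $x\in F_0$. Then I put $m$ into $N$, and any infinite $C\supseteq F$ inside $A$ witnesses failure. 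Otherwise, $\Gamma$ never produces a fresh number that the condition leaves undetermined. I then add a fresh $m$ to $A$ only as a member of $D_x$ for new $x\in B_0$ with $x$ not $\Gamma$-relevant. I also take $C$ to be an infinite subset of $B_1$ whose $E$-readings avoid $m$. The case analysis must show that $\Gamma(C)$ cannot contain $m$ without falling into the first case. Making this dichotomy airtight is the step I expect to be hardest: the operator may exploit elements of $A$ that have already been committed. If the direct argument fails, I would first try the failed-forcing argument of \cite[Proposition 3.7]{green} run inside this forcing, diagonalizing against $\Gamma$ on a subset $C$ built alongside the generic.
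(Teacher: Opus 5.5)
\textbf{There is a genuine gap: your forcing cannot give $A\leq^{ui}_e B_i$, and the non-uniformity step is missing its key idea.}

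\textbf{The coding step.} Your coding is essentially the paper's (take $D_x=\{n:x\in X_n\}$ and $E_x=\{n:x\in Y_n\}$). However, your forcing as defined cannot deliver $A\leq^{ui}_e B_i$, so Corollary \ref{forcingtool} never applies. For $A\leq^{ui}_e B_0$ via $\Theta_0$ you need, for each $a\in A$, that \emph{all but finitely many} $x\in B_0$ have $a\in D_x$. A density requirement such as ``later some $x$ with $a\in D_x$ enters $F_0$'' only gives infinitely many such $x$. In fact genericity destroys the property. Adding a fresh $x$ with $D_x=\emptyset$ to $F_0$ is always a legal extension of $(F_0,F_1,N)$. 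So $C=\{x\in B_0:D_x=\emptyset\}$ is an infinite subset of $B_0$ with $\Theta_0(C)=C\neq A$.

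What is missing is a commitment built into the conditions. The paper supplies this with the third coordinate $k$: an extension $q\leq_{\p}p$ may only add to $\sigma$ (resp.\ $\tau$) numbers lying in $X_n$ (resp.\ $Y_n$) for every $n<k_p$ already in $\sigma_p\cup\tau_p$, and $k$ is forced to tend to infinity. Relatedly, your finite-set conditions never decide initial segments of $B_0$, so no computation $\Phi_e^{B_0}(m)$ is ever forced; you need strings.

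With that repair, two of your other steps are fine. Your non-$\Pi^1_1$ diagonalization plus Corollary \ref{majorpi11} is a valid substitute for the paper's ``no infinite hyperarithmetic subset'' route. Your $C=B_0$ idea for non-introreducibility also works:
\begin{itemize}
\item choose a fresh $m$ whose second reading is exactly the committed set, and first decide $m\notin A$;
\item if $\Phi_e^{B_0}(m)$ converges to $0$ at some $p_1$, pass to the variant of $p_1$ with $m$ placed on the $B_1$ side and with the old commitment $k_p$;
\item this variant still extends $p$ and leaves $\sigma_{p_1}$, hence the use, unchanged.
\end{itemize}

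\textbf{The non-uniformity step.} You leave this step open, and it is where the key idea is needed. Your second horn cannot work. Once $A\leq^{ui}_e B_1$ via $\Theta_1$ holds, every infinite $C\subseteq B_1$ has $m\in\Theta_1(C)$ for each $m\in A$. So an infinite $C\subseteq B_1$ whose $E$-readings avoid a member $m$ does not exist.

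Your first horn is the right target; it is essentially the paper's dense set $D$. The paper shows it \emph{always} obtains, by a swap argument rather than a dichotomy:
\begin{itemize}
\item Assume $\Gamma$ witnesses uniform introenumerability of the generic $G$. Fix $p\in\G$ and $n\in G$ beyond $|\sigma_p|,|\tau_p|$.
\item Genericity makes $Y=G_0\cap B_{E_p}\cap(n,\infty)$ infinite; these are members of $G_0$ whose $Y$-pattern is exactly $E_p$. So some finite $u\subseteq Y$ has $n\in\Gamma(u)$.
\item Then $(\sigma_p,\tau_p\cup u,k_p)$ is a condition extending $p$. The elements of $u$ are legal members of the \emph{other} half, because their second reading mentions exactly the numbers committed at $p$. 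Their first reading, which may well code $n$, is now irrelevant.
\item In this condition $n$ is not yet a member and can be kept out, forcing $\Gamma(u)\nsubseteq G$.
\end{itemize}
Such conditions are dense above $\G$, which is a contradiction. In your notation: take $x\in B_0$ beyond $n$ with $E_x$ equal to the committed set, move them into $F_1$, and put $n$ into $N$. $\Gamma$ cannot tell which half $u$ lies in.
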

 Note that such an introenumerable set is as nontrivial as possible in the sense that it avoids all notions which it can avoid. We use a modification of the forcing notion introduced in \cite[Theorem 1.3(b)]{green}.
 \begin{proof}
For each $i\in\omega$, let $X_i$ be the set of numbers divisible by the $(2i)$th prime number, and let $Y_i$ be the set of numbers divisible by the $(2i+1)$th prime number. Let $\Theta_0$ be the enumeration operator such that for a finite set $E\subseteq\omega$, $n\in \Theta_0(E)$ if and only if $E\cap X_n\neq\emptyset$. Likewise, let $\Theta_1$ be the enumeration operator such that $n\in\Theta_1(E)$ if and only if $E\cap Y_n\neq\emptyset$. We will build infinite sets $G_0$ and $G_1$ such that $G:=G_0\cup G_1$ is $\leq^{ui}_e G_0$ via $\Theta_0$ and $G\leq^{ui}_e G_1$ via $\Theta_1$. Note that for each finite $E\subseteq \omega$, $\bigcap_{m\in E}X_m\cap\bigcap_{m\notin E}\ol{X_m}$ is an infinite set, and likewise for $Y_m$'s. Let $A_E:=\bigcap_{m\in E}X_m\cap\bigcap_{m\notin E}\ol{X_m}$ and $B_E:=\bigcap_{m\in E}Y_m\cap\bigcap_{m\notin E}\ol{Y_m}$. Note that $A_{E_1}\cap B_{E_2}$ is infinite for finite sets $E_1,E_2\subseteq\omega$ as well.

The forcing notion $\p$ consists of the triples $(\sigma,\tau,k)$ such that $\sigma,\tau\in 2^{<\omega}$ and $k\in\omega$ satisfy the following:
\begin{itemize}
\item If $(\sigma\cup\tau)(n)=0$, then $X_n\cap \sigma=\emptyset$ and $Y_n\cap \tau=\emptyset$.
\item $k<|\sigma|,|\tau|$. 
\end{itemize}
For $(\sigma_0,\tau_0,k_0),(\sigma_1,\tau_1,k_1)\in\p$, we say $(\sigma_1,\tau_1,k_1)\leq_\p(\sigma_0,\tau_0,k_0)$ if
\begin{itemize}
\item $\sigma_0\preceq\sigma_1,\tau_0\preceq\tau_1,k_0\leq k_1$.
\item $\forall n<k_0$, $(\sigma_0\cup\tau_0)(n)=1$ implies $(\sigma_1\setminus\sigma_0)\subseteq X_n$ and $(\tau_1\setminus \tau_0)\subseteq Y_n$.
\end{itemize}
For a condition $p\in\p$, we write $(\sigma_p,\tau_p,k_p)$ for the corresponding triple. Let $E_p\subseteq\omega$ denote the finite set $\{n\in\omega:(\sigma_p\cup\tau_p)(n)=1\}$. Clearly, for each $p\in\p$, we can find a nontrivial extension $q\leq_\p p$ in $\p$. For instance, we can let $n_0\in A_{E_p}$ be larger than $|\sigma_p|$ and $n_1\in B_{E_p}$ be larger than $|\tau_p|$. Then $(\sigma_p\cup\{n_0\},\tau_p\cup\{n_1\},k_p+1)$ extends $p$ in $\p$. Let $\G$ be a sufficiently generic filter on $\p$ and let $G_0=\bigcup\{\sigma_p:p\in\G\}$, $G_1=\bigcup\{\tau_p:p\in\G\}$. Let $G=G_0\cup G_1$. Then $G_0$ and $G_1$ are forced to be infinite sets such that $G\leq_e^{ui}G_0$ via $\Theta_0$ and $G\leq_e^{ui}G_1$ via $\Theta_1$, so that $G$ is introenumerable by Corollary \ref{forcingtool}.

We claim that $G$ is not uniformly introenumerable by the sufficient genericity of $\G$. Assume otherwise, so that $G$ is uniformly introenumerable via an enumeration operator $\Gamma$. We show that the set
$$D:=\{q\in\p:\exists n\hs n\notin E_q\hs\&\hs n\in\Gamma(E_q)\}$$
is dense above $\G$ for a contradiction. Fix $p\in\G$ and let $n\in G_0\setminus G_1$ be larger than $|\sigma_p|$ and $|\tau_p|$---it is easy to show that the empty condition forces the both sets $G_0\setminus G_1$ and $G_1\setminus G_0$ to be infinite. Note that for each finite $F\subseteq\omega$, the empty condition already forces $G_0\cap B_F$ to be an infinite set: given any $p\in\p$, we may let $m\in A_{E_p}\cap B_F$ be larger than $|\sigma_p|$, so that $(\sigma_p\cup\{m\},\tau_p,k_p)$ is a condition extending $p$. Hence, $Y:=(G_0\cap B_{E_p})\cap (n,\infty)$ is an infinite subset of $G$. Then $n\in \Gamma(Y)$ by the assumption, so we may let $u\subseteq Y$ be a finite subset such that $n\in\Gamma(u)$. Then $q:=(\sigma_p,\tau_p\cup u,k_p)\in\p$ is a condition extending $p$, and note that $n\in \Gamma(E_q)$ while $n\notin E_q$. Thus, we have $q\in D$.

To see that $G$ is not introreducible, we prove the following claim: given $p\in \G$, a Turing operator $\Phi$, and $S\subseteq E_p$, either
\begin{enumerate}
\item there exists $q\leq_\p p$ and $S^*\subseteq E_q$ such that $S^*$ extends $S$ as an end extension and $\Phi^{S^*}$ disagrees with $E_q$, or
\item there exists $n$, $p_0\leq_\p p$, and $S_0\subseteq E_{p_0}$ extending $S$ as an end extension such that for all $p_1\leq_\p p_0$ and $S_1\subseteq E_{p_1}$ extending $S_0$ as an end extension, $\Phi^{S_1}(n)\uparrow$.
\end{enumerate}
 If the claim is true, then by the sufficient genericity, the empty condition forces that there exists an infinite subset $Y\subseteq G$ with $G\nleq_T Y$. Hence, fix $p$, $\Phi$, and $S\subseteq E_p$. Assume that $(2)$ is not true with $p$, $\Phi$, and $S$. Let $n\in A_{E_p}$ be a number larger than both $|\sigma_p|$ and $|\tau_p|$. Consider the extension $p_0$ of $p$ which is gotten by concatenating $0$'s to $\sigma_p$ and $\tau_p$ up to $n$. As $(2)$ is not true, we may let $p_1\leq_\p p_0$ and $S_1\subseteq E_{p_1}$ be so that $S_1$ extends $S$ as an end extension and $\Phi^{S_1}(n)\downarrow$. If $\Phi^{S_1}(n)=1$, then $q:=p_1$ is a desired extension. If $\Phi^{S_1}(n)=0$, then $q:=(\sigma_{p_1}\cup\{n\},\tau_{p_1},k_p)$ is a desired extension.

 Lastly, the sufficient genericity ensures that $G$ is not majorenumerable. Indeed, let $(H_i,i\in\omega)$ be a list of all hyperarithmetic sets. Let $p\in\G$ be a condition and let $i\in \omega$. If $H_i$ is an infinite set, pick $n\in H_i$ larger than $|\sigma_p|+|\tau_p|$. We consider $q\leq_\p p$ gotten by concatenating $0$'s to $\sigma_p$ and $\tau_p$ up to $n$. Clearly, $q\forces H_i\nsubseteq G$. Hence, the sufficient genericity ensures that $G$ has no infinite hyperarithmetic subset. It follows from Corollary \ref{jock sol} and Corollary \ref{refinable} that $G$ is not majorenumerable.
\end{proof}
\begin{deft}
An introenumerable set $A$ is \emph{finitely decomposable} if there exists $A_0,\dots,A_n$ such that $A=A_0\cup\cdots\cup A_n$ and $A\leq^{ui}_e A_i$ for each $i\leq n$. An introreducible set $A$ is \emph{finitely decomposable} if there exists $A_0,\dots,A_n$ such that $A=A_0\cup\cdots \cup A_n$ and $A\leq^{ui}_T A_i$ for each $i$.
\end{deft}
 In \cite[Theorem 1.5]{green}, they show that if $A$ is introenumerable, then there is $B\in[A]^\omega$ such that $A\leq^{ui}_T B$. If $\hat{B}$ is a finite modification of $B$, then $A\leq^{ui}_T \hat{B}$ still holds. Hence, every introenumerable (or introreducible) set is a countable union of sets $B_0,B_1,\dots$ such that $A\leq^{ui}_T B_i$ for all $i\in\omega$. If such a union can be chosen to be a finite union, then the union is introreducible by Corollary \ref{4.12}. Hence, any introenumerable set which is not introreducible cannot be a union of finitely many such sets. On the other hand, it is still open whether every introenumerable set $A$ is a union of finitely many sets $A_0,\dots,A_n$ such that $A\leq_e^{ui}A_i$ for $i\leq n$, i.e.\ whether every introenumerable set is finitely decomposable. Thus, we have the following open questions.
 \begin{quest}\label{open questions}\ 
 \begin{enumerate}
 \item Does there exist an introreducible set which is not a union of finitely many uniformly introreducible sets?
 
 \item Does there exist an introreducible set which does not have a uniformly introreducible subset of the same Turing degree? If so, such a set is an example of an introreducible set which cannot be a union of finitely many uniformly introreducible sets.

 \item Does there exist an introenumerable set which does not have a uniformly introenumerable subset of the same enumeration degree? If so, such a set is an example of an introenumerable set which cannot be a union of finitely many uniformly introenumerable sets of the same enumeration degree.

 \item Does there exist an introreducible set which is not finitely decomposable?

 \item Does there exist an introenumerable set which is not finitely decomposable?
 \end{enumerate}
 \end{quest}
Related to item $(2)$ is the question: can you find an introreducible set $A$ which has no infinite subset $B\subseteq A$ such that $B\equiv_T A$ and $A\leq_T^{ui} B$? In fact, this question is equivalent to item $(2)$. We answer positively in the case that `introreducible' is replaced by `introenumerable' and $\equiv_T$ is replaced by $\equiv_e$.
\begin{prop}\label{forcingproof}
There exists a uniformly introenumerable set $G$ such that for any infinite set $A$ with $A\leq_e G$, we have $\neg[A\geq^{ui}_T G]$.
\end{prop}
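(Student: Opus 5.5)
We will build $G$ by forcing, modelled on the Mathias-style forcing of \cite[Theorem 1.3(b)]{green}, in the same spirit as the proof of Theorem \ref{ienotuie forcing proof}. The plan is to hard-wire uniform introenumerability into the conditions and then let genericity kill every potential uniform Turing reduction from a set $A\leq_e G$.

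\emph{The forcing.} Let $X_n$ be the set of multiples of the $n$th prime, and let $\Theta$ be the enumeration operator with $n\in\Theta(E)$ iff $E\cap X_n\neq\emptyset$. Conditions are pairs $(\sigma,k)$ with $\sigma\in 2^{<\omega}$ and $k<|\sigma|$, subject to two requirements:
\begin{itemize}
\item if $\sigma(n)=0$ then $X_n\cap\sigma=\emptyset$;
\item extensions $(\sigma',k')\leq(\sigma,k)$ satisfy $\sigma\preceq\sigma'$, $k\le k'$, and $\sigma'\setminus\sigma\subseteq X_n$ for every $n<k$ with $\sigma(n)=1$.
\end{itemize}
Exactly as in Theorem \ref{ienotuie forcing proof}, a sufficiently generic $G$ is infinite and satisfies $\Theta(C)=G$ for all $C\in[G]^\omega$. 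The argument is that every element of $G$ eventually gets committed below $k$, after which every new element is a multiple of its prime, while elements outside $G$ have no multiples in $G$. So $G$ is uniformly introenumerable.

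\emph{The requirement.} Fix an enumeration operator $\Psi$ and a Turing operator $\Phi$. We want to force the following: if $A=\Psi(G)$ is infinite, then there is $C\in[A]^\omega$ with $\Phi^C\neq G$. Given a condition $p=(\sigma,k)$, the plan is to split into two cases.

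\emph{Case 1.} Some extension $q$ forces $\Psi(G)$ finite; then the requirement is vacuous.

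\emph{Case 2.} Otherwise, for every $q\le p$ and every $N$ there are $q'\le q$ and $a>N$ with $a\in\Psi(E_{q'})$.

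In Case 2 we diagonalize in the style of the non-introreducibility claim of Theorem \ref{ienotuie forcing proof}. Fix a fresh $n$ larger than everything in $p$ that is "free", i.e. lies in $A_{E_p}$, so that we may still decide $n\in G$ either way. First extend by padding $0$'s past $n$ while keeping $n$ undecided at the level $\sigma(n)=0$. Then try to find a finite $S\subseteq\Psi(E_{q})$, for some $q$ extending this padded condition, with $\Phi^{S}(n)\downarrow$, where $S$ is built from elements obtainable both with $n\in G$ and with $n\notin G$. If $\Phi^S(n)=1$, keep $n\notin G$; if $\Phi^S(n)=0$, add $n$ to $\sigma$, which is allowed as in Theorem \ref{ienotuie forcing proof}. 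If no such convergent $S$ exists, then the failure itself lets us build, by genericity, an infinite $C\subseteq A$, chosen as a union of end-extensions along the generic, with $\Phi^C(n)\uparrow$. Either outcome defeats $\Phi$ on some infinite subset of $A$.

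\emph{Main obstacle.} The hard step is making the finite witnesses $S\subseteq\Psi(E_q)$ robust under the subsequent decision about $n$. Adding $n$ to $\sigma$ might in principle change which elements $\Psi$ enumerates, or the constraint $X_n\cap\sigma=\emptyset$ might block the extension that enumerated $S$. I would handle this by taking $S$ enumerated from a part $u$ of $E_q$ that lies above $n$ and inside $A_{E_p}$ with $n\notin u$. Then $u$ is compatible with both choices for $n$: it avoids $X_n$ when $n$ stays out, and it is still legal after $n$ is added, since $n$ is not below $k$. Since enumeration is monotone, $S$ stays in $\Psi(G)$ in both cases. The end-extension bookkeeping for $C$ should then proceed as in the proof of Theorem \ref{ienotuie forcing proof}.
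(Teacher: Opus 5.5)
Your skeleton is sound: pad past a fresh $n\in A_{E_p}$, search for a convergent finite $S\subseteq\Psi(E_q)$ with $q$ below the padded condition $p_0$, and split on output $1$ / output $0$ / divergence. Your ``add $n$ when $\Phi^S(n)=0$'' move is exactly the paper's. The gap is the device in your ``main obstacle'' paragraph, namely restricting to $S$ enumerated from a part $u\subseteq E_q$ lying above $n$ and inside $A_{E_p}$: taken as stated, it breaks your divergence case.

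For a generic $G$ the set $G\cap A_{E_p}$ is finite. Pick $g\in G\setminus E_p$; once $k>g$, every new element of $G$ lies in $X_g$, and $X_g$ is disjoint from $A_{E_p}$. So your restricted witnesses come from $\Psi$ of a finite part of $G$. Failure of the restricted search then says nothing about $\Phi^C(n)$ for infinite $C\subseteq A$, and a $C$ assembled from such pieces is typically finite. Concretely, take $\Psi$ to be the identity and let $\Phi^X(x)$ wait for $c<c'$ in $X$ with $x<c$ and $c'\in X_c$. No restricted $S$ converges, since $c>n$ gives $c\notin E_p$ and hence $A_{E_p}\cap X_c=\emptyset$. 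Yet $\Phi^C(n)\downarrow$ for every $C\in[G]^\omega$.

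The restriction is also unnecessary, because the obstacle you describe is illusory. Take any $q\le p_0$ and any finite $S\subseteq\Psi(E_q)$. Then $\sigma_q(n)=0$, and $r=(\sigma_q\cup\{n\},k_p)$ is a condition extending $p$ (not $q$, but that is all density needs):
\begin{itemize}
\item every $n'$ with $\sigma_r(n')=0$ lies outside $E_q\supseteq E_p$, so $n\notin X_{n'}$;
\item every $n'\in E_p$ has $n\in X_{n'}$;
\item the constraint $X_n\cap\sigma=\emptyset$ simply disappears once $\sigma(n)=1$;
\item $E_q\subseteq E_r$ gives $S\subseteq\Psi(E_r)$ by monotonicity;
\item the $0$-bits of $S$ are harmless, since you choose $C=(S\cap[0,\mathrm{use}))\cup(\Psi(G)\cap[\mathrm{use},\infty))$.
\end{itemize}
With the unrestricted search, the divergence case is immediate: take $C=A$ itself.

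The paper streamlines this further and avoids the case split. It assumes $\Phi^C=G$ for all $C\in[\Gamma(G)]^\omega$ and applies this to $C=\Gamma(G)$ at a fresh $m\in A_{E_p}\setminus G$. This yields a finite use $F\subseteq\Gamma(E_q)$ with $\Phi^F(m)=0$ for some $q\in\G$. Then $(\sigma_q\cup\{m\},k_p)$ shows the set $D$ is dense above $\G$, so only your ``$0$'' outcome is ever needed.
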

\begin{proof}
Let $X_i$ and $A_E$'s be as in the proof of Theorem \ref{ienotuie forcing proof}. The forcing notion we consider in this proof is the set $\p$ consisting of the pairs $(\sigma,k)$ such that $\sigma\in 2^{<\omega}$ and $k\in\omega$ satisfy the followings:
\begin{itemize}
\item If $\sigma(n)=0$, then $X_n\cap \sigma=\emptyset$.
\item $k<|\sigma|$.
\end{itemize}
For $(\sigma_0,k_0),(\sigma_1,k_1)\in\p$, we say $(\sigma_1,k_1)\leq_\p(\sigma_0,k_0)$ if
\begin{itemize}
\item $\sigma_0\preceq\sigma_1,k_0\leq k_1$.
\item $\forall n<k_0$, $\sigma_0(n)=1$ implies $(\sigma_1\setminus\sigma_0)\subseteq X_n$.
\end{itemize}
For a condition $p\in\p$, we write $(\sigma_p,k_p)$ for the corresponding pair. Let $E_p\subseteq\omega$ denote the finite set, which is $\sigma_p$ viewed as a set. For each $p\in\p$, there exists a nontrivial extension $q\leq_p p$ in $\p$. For instance, we can pick a number $n_0\in A_{E_p}$ larger than $|\sigma_p|$ and consider $(\sigma_p\cup\{n_0\},k_p+1)\in\p$. Let $\G$ be a sufficiently generic filter on $\p$, and let $G=\bigcup\{\sigma_p:p\in\G\}$. Let $\Theta_0$ be the enumeration operator defined in the proof of Theorem \ref{ienotuie forcing proof}. Then the empty condition already forces $G$ to be an infinite set and $k_p$ to increase to infinity, so that $G$ is uniformly introenumerable via $\Theta_0$.

Let $\Phi$ be a Turing operator, and let $\Gamma$ be an enumeration operator. We claim that the empty condition forces $\neg[\Gamma(G)\geq_T^{ui} G\text{ via }\Phi]$. Assume on the contrary that for all $C\in[\Gamma(G)]^\omega$, $\Phi^C=G$. We show that the set 
$$D:=\{p\in\p:\exists n\hs\exists F\subseteq\Gamma(E_p)\hs \Phi^{F}(n)=0\hs\&\hs n\in E_p\}$$
is dense above $\G$ to get a contradiction---for if $p\in \G\cap D$, then $F\subseteq\Gamma(E_p)$ can be extended to an infinite subset of $\Gamma(G)$ while preserving all bits which witness the computation $\Phi^{F}(n)\downarrow=0$. Fix $p\in \G$. Note that for each finite set $E\subseteq\omega$, the empty condition forces $A_E\setminus G$ to be an infinite set, because for each $p\in\p$, $(\sigma_p^\frown 0,k_p+1)\in \p$ extends $p$. Hence, we may let $m>|\sigma_p|$ be such that $m\notin G$ and $m\in A_{E_p}$. By the assumption, we have $\Phi^{\Gamma(G)}(m)=0$, so we may let $q\in\G$ be such that $q\leq_\p p$ and $\Phi^{\Gamma(E_q)}(m)=0$. Then $r:=(\sigma_q\cup\{m\},k_p)$ is a condition in $\p$ which extends $p$. Moreover, $\Gamma(E_q)\subseteq \Gamma(E_r)$, so $r\in D$.
\end{proof}
If one can prove Proposition \ref{forcingproof} with $\geq^{ui}_T$ replaced by $\geq^{ui}_e$ and `uniformly introenumerable' replaced by `introenumerable', then that shows an existence of an introenumerable set without a uniformly introenumerable set of the same enumeration degree; in particular, it answers item (3). Note that $\leq_e$ cannot be replaced by $\leq_T$ as every Turing degree contains a uniformly introreducible set.

 When $A$ is introenumerable, i.e.\ $A\leq_e^i A$, there is some infinite subset $B\subseteq A$ with $A\leq^{ui}_T B$ \cite[Proposition 5.3]{green}. Informally, by thinning out the right hand side, we can find a subset where $A$'s both positive and negative information are very densely coded in it. What about the other way around? By thinning out the left hand side instead, can we find an infinite subset $B\subseteq A$ with $B\leq^{ui}_T A$? The following shows that the answer is strongly negative.
 \begin{prop}
 There exists a uniformly introenumerable set $G$ such that there is no $C\in[G]^\omega$ with $C\leq^i_T G$.
 \end{prop}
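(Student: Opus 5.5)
We will build $G$ by forcing with the poset $\p$ from the proof of Proposition \ref{forcingproof}: conditions $(\sigma,k)$ with $k<|\sigma|$, where $\sigma(n)=0$ implies $X_n\cap\sigma=\emptyset$, ordered as there. A sufficiently generic $G$ is uniformly introenumerable via $\Theta_0$, exactly as in that proof. It remains to defeat every pair consisting of a Turing operator $\Phi$ and a candidate infinite $C\subseteq G$ with $C\leq^i_T G$.

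\emph{Reduction.} Suppose $C\in[G]^\omega$ and $C\leq^i_T G$. Then in particular $C\leq_T G$, say $C=\Psi^G$ for some Turing operator $\Psi$. Hence it suffices to show the following: for every pair of Turing operators $\Psi,\Phi$, the empty condition forces
\[
\neg\bigl[\Psi^G\text{ is an infinite subset of }G \text{ and } \Phi^Y=\Psi^G \text{ for every } Y\in[G]^\omega\bigr].
\]
Even with $\Phi$ fixed per $Y$, we only need that some $Y\in[G]^\omega$ fails to compute $C$. So the target is: there is $Y\in[G]^\omega$ with $\Psi^G\not\leq_T Y$. We get this by a diagonal argument in the style of the non-introreducibility claim in Theorem \ref{ienotuie forcing proof}, building $Y$ alongside $G$ by end extensions $S\subseteq E_p$.

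\emph{Key step.} Fix $p$, a finite $S\subseteq E_p$, and a Turing operator $\Phi$. We want $q\leq_\p p$ and an end extension $S^*\subseteq E_q$ of $S$ such that one of two things holds. Either some $n$ has $\Phi^{S^*}(n)\downarrow\neq\Psi^{E_q}(n)\downarrow$, with the $\Psi$-use protected by $q$. Or $\Phi^{S_1}(n)$ diverges for all further extensions. Assume the divergence case fails and that $p$ forces $\Psi^G$ to be infinite. Choose a large $n$, and extend $p$ to $p_0$ so that $p_0$ decides $\Psi^G(n)$. Here we use that $\Psi^G$ must pick elements of $G$, which the generic can add or omit freely among numbers in $A_{E_p}$ above $|\sigma_p|$. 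Since the divergence case fails, we find $p_1\leq p_0$ and $S_1$ with $\Phi^{S_1}(n)\downarrow$.

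The main obstacle is flipping the value of $\Psi$ at $n$ without disturbing $S_1$. Unlike the earlier claim, $\Psi^{G}(n)$ is not simply the membership bit of $n$. My plan is to exploit the freedom that $G$ contains infinitely many elements in each cell $A_E$ above any point. Run the argument with the roles of $Y$ and $C$ arranged so that $\Phi$ must commit on an element $n$ whose membership in $C$ is not yet forced. Then use genericity (the density of "$\Psi$ decides $n$ both ways'' or "$\Psi^G\cap(N,\infty)$ is determined by finite information'') to split.

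If $\Psi^G$ is locally determined in a way that cannot be split, the condition forces $\Psi^G$ to be computable from a finite part plus the structure of $\p$. In that case I would argue that $\Psi^G$ is then c.e.\ or finite relative to the forcing. This would contradict genericity, since $G$ avoids any given infinite hyperarithmetic set, as in the non-majorenumerability argument of Theorem \ref{ienotuie forcing proof}. Collecting the dense sets over all $\Psi,\Phi,S$ finishes the construction of $Y$.
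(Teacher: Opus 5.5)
\textbf{There is a genuine gap in the key step.} Your reduction is sound and matches the paper's frame: write $C=\Psi^G$, build a second generic $Y\subseteq G$ alongside $G$, and diagonalize $\Phi^Y\neq\Psi^G$ for all $\Phi$. The paper makes ``alongside'' precise by forcing with triples $(\sigma,\tau,k)$, where $(\sigma,k)\in\p$ and $\tau\subseteq\sigma$ is extended as a string; your $S$ must be carried in the conditions in the same way.

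The key step is left open, and the order you propose cannot close it. Once $p_0$ decides $\Psi^G(n)$, no extension of $p_0$ can flip it. So if the value $\Phi^{S_1}(n)$ you then find agrees with the decided value, you are stuck. The fallback dichotomy does not rescue this. Nothing you say shows that a ``locally determined'' $\Psi^G$ is forced to equal a fixed hyperarithmetic set. Since $\Psi^G$ is computed from the generic, the fact that $G$ has no infinite hyperarithmetic subset yields no contradiction.

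\textbf{The missing idea is an amalgamation property of $\p$.} Suppose $q,r\le_\p p$ and $\sigma_r\succeq\sigma_p^\frown 0^j$ with $|\sigma_p|+j\ge|\sigma_q|$. Then $(\sigma_q\cup\sigma_r,k_p)$ is a condition extending $p$, and its first coordinate extends $\sigma_q$ as a string. Two facts make this work: new elements only need to lie in $X_m$ for $m<k_p$ with $\sigma_p(m)=1$, which is preserved under unions; and the zero block makes the clauses $\sigma(m)=0\Rightarrow X_m\cap\sigma=\emptyset$ of $q$ and $r$ compatible. So decide the two sides on separate extensions of $p$ and then merge them:
\begin{enumerate}
\item Assume $p$ forces $\Psi^G$ total, infinite and $\subseteq G$. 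Take $n>|\sigma_p|$ and $q\le_\p p$ with $\Psi^{\sigma_q}(n)=1$.
\item Go back to $p$ and pad with zeros past $|\sigma_q|$ to get $r_0$. Then $n\notin G$, hence $\Psi^G(n)=0$, below $r_0$.
\item Since the divergence case fails, find $r\le_\p r_0$ and an end extension $S_1$ of $S$ with $\Phi^{S_1}(n)\downarrow$.
\item If $\Phi^{S_1}(n)=1$, then $r$, extended to settle $\Psi^G(n)=0$, already disagrees.
\item If $\Phi^{S_1}(n)=0$, then $(\sigma_q\cup\sigma_r,k_p)$ together with $S_1$ forces $\Psi^G(n)=1\neq 0=\Phi^Y(n)$.
\end{enumerate}
This is exactly the paper's argument. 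The paper avoids your case split by assuming the $Y$-side operator is forced to output a subset of $G$, which makes the $Y$-side value at the witness automatically $0$. Without this ``decide each side on its own extension of $p$, then take the union'' step, your proof does not go through.
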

 The proof below is a slight modification of the proof of \cite[Theorem 1.3(b)]{green} which states: there exists $A$ and $B$ with $A\leq^{ui}_e B$ such that there is no $C\in[A]^\omega$ with $C\leq^i_TB$.
 \begin{proof}
 We consider the forcing notion $\hat{\p}$ which is almost same as the forcing notion $\p$ from the proof of Proposition \ref{forcingproof}, but with the following difference: the conditions of $\hat{\p}$ are the triples $(\sigma,\tau,k)$ such that $(\sigma,\tau,k)\in\hat{\p}$ if and only if $(\sigma,k)\in\p$ and $\tau\subseteq\sigma$. And for two conditions $(\sigma_0,\tau_0,k_0)$ and $(\sigma_1,\tau_1,k_1)$ in $\hat{\p}$, we define $\leq_{\hat{\p}}$ in the following way: $(\sigma_1,\tau_1,k_1)\leq_{\hat{\p}}(\sigma_0,\tau_0,k_0)$ if and only if $(\sigma_1,k_1)\leq_\p(\sigma_0,k_0)$ and $\tau_1\succeq\tau_0$. For a condition $p\in\hat{\p}$, let $(\sigma_p,\tau_p,k_p)$ denote the corresponding triple to $p$. Let $\G$ be a sufficiently generic filter on $\hat{\p}$. We let $G=\bigcup\{\sigma_p:p\in\G\}$ and $X=\bigcup\{\tau_p:p\in\G\}$.
 
 As in the proof of Proposition \ref{forcingproof}, $G$ is uniformly introenumerable. And clearly, the empty condition forces $X\subseteq G$ to be an infinite subset. Assume for contradiction that there is some $C\in[G]^\omega$ such that $C\leq^i_T G$. Then in particular, there exist Turing operators $\Phi$ and $\Psi$ such that $\Phi(G)=\Psi(X)=C$.

 Let $p\in\G$ be a condition. Due to the sufficient genericity, we may assume, by considering an extension if necessary, that $p$ forces that $\Phi(G)$ is total and infinite, and that $\Psi(X)$ is total and a subset of $G$. Hence, there exists $i>|\sigma_p|$ such that for some condition $q\leq_{\hat{p}}p$, we have $\Phi^{\sigma_q}(i)\downarrow=1$. Let $j:=i+|\sigma_q|$. Note that $r_0:=(\sigma_p^\frown 0^j,\tau_p^\frown 0^j,k_p)$ is a condition extending $p$ in $\hat{\p}$, where $0^j$ denotes $j$-many $0$'s. So there exists a condition $r\in\hat{\p}$ extending $r_0$ such that $\Psi^{\tau_r}(i)\downarrow=0$ by the assumption that $\Psi(X)$ is forced to be total and a subset of $G$.
 
 Now, we can combine the two conditions $q$ and $r$ to get a contradiction. Let $\sigma^*:=\sigma_q\cup\sigma_r$. First of all, we claim that $p^*:=(\sigma^*,\tau_r,k_p)$ is a condition of $\hat{\p}$. We certainly have $\tau_r\subseteq \sigma^*$. And since $(\sigma_q,k_q)$ and $(\sigma_r,k_r)$ are conditions of $\p$, $(\sigma^*,k_p)$ is a condition of $\p$ as well. Secondly, we check that $p^*$ extends $p$ in $\hat{\p}$. We clearly have $\tau_r\succeq \tau_p$. And note that $(\sigma^*\setminus \sigma_p)=(\sigma_q\setminus \sigma_p)\cup(\sigma_r\setminus \sigma_p)$. So the fact that $q$ and $r$ both extend $p$ implies that $(\sigma^*\setminus \sigma_p)\subseteq X_n$ for each $n<k_p$ with $\sigma_p(n)=1$. Lastly, $\Phi^{\sigma^*}(i)\downarrow=1$ because $\sigma^*\restriction_{|\sigma_q|}=\sigma_q$. So we have found a condition $p^*\leq_{\hat{\p}}p$ such that `$\Phi(G)=\Psi(X)$' does not hold on it. By the sufficient genericity, this is a contradiction.
 \end{proof}
 The next proposition shows that we cannot build an introreducible set that does not have a uniformly introreducible subset using the tool from Corollary \ref{forcingtool}. Hence, if there exists an introreducible set that does not have a uniformly introreducible subset, then item (4) from the Open Questions \ref{open questions} has a positive answer.
\begin{prop}
Let $A$ be an introreducible set. If $A$ is finitely decomposable, then $A$ has a uniformly introreducible subset.
\end{prop}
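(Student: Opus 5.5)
The plan is to find one piece $A_i$ of the decomposition that has an infinite subset $B$ computable from $A$, and then to show that such a $B$ is automatically uniformly introreducible. Fix $A=A_0\cup\cdots\cup A_n$ with $A\leq^{ui}_T A_i$ via Turing operators $\Phi_i$. As in the discussion before Corollary \ref{finiteunion}, we first throw away finitely many elements and reindex. Because a finite modification of $A_i$ still satisfies $A\leq^{ui}_T A_i$ (via a modified operator), we may assume the union is nontrivial, i.e.\ each $A_i\setminus\bigcup_{j\neq i}A_j$ is infinite.

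\emph{Step 1 (the reduction).} Suppose $B\subseteq A_i$ is infinite and $B=\Psi^A$ for some Turing operator $\Psi$. Then $B$ is uniformly introreducible. Indeed, for any $C\in[B]^\omega$ we have $C\in[A_i]^\omega$, so $\Phi_i^C=A$. Hence $\Psi^{\Phi_i^C}=B$, and the single operator $\Psi\circ\Phi_i$ witnesses $B\leq^{ui}_T B$. So the whole proposition reduces to showing that $A$ computes an infinite subset of some $A_i$.

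\emph{Step 2 (finding an $A$-computable subset of a piece).} It suffices to show that $A$ can enumerate an infinite subset of some $A_i$, since an infinite $A$-c.e.\ set has an infinite $A$-computable subset. I would rerun the proof of Theorem \ref{decomposition}, or of Jockusch's Theorem, with $C:=A$.
\begin{itemize}
\item The operators $\Phi_i$ are Turing operators, so $\ol{A}$ is also uniformly coded in every infinite subset of each $A_i$. I would therefore work with the $A$-computable theory $T$ whose axioms are: `$E\subseteq A_i\rimp A(k)=b$' whenever $\Phi_i^E(k)=b$, together with `$A_0(k)=1\vee\cdots\vee A_n(k)=1$' for each $k\in A$.
\item Choose some $m$ and a special formula $\phii$ with no proper special extension, which exists by the same compactness argument. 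Then take $J$, $\CB$ and $S$ exactly as in that proof. This produces an infinite $A$-c.e.\ set $S\subseteq\bigcup\CB$ with $|\CB|<n+1$.
\item Induct on the number of pieces, with the roles of $C$ and $A$ both played by $A$, until one piece remains. That gives an infinite $A$-c.e.\ subset of a single $A_i$.
\end{itemize}
A cleaner variant I would try first is to apply Jockusch's Theorem directly to $C=A$ and each $A_i$. Its proof only uses that every infinite subset of $A_i$ uniformly decides $A$, so it should give that $A_i$ is c.e.\ in $A$. In particular $A$ would enumerate all of the infinite set $A_i$ itself.

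\emph{Main obstacle.} The hard part is Step 2. Theorem \ref{decomposition} is phrased as a proof by contradiction with $A$ not $C$-c.e., and here $A$ is trivially $A$-c.e., so the argument does not apply verbatim. What must be extracted from its proof is the positive conclusion: the theory $T$ (or a special formula $\phii$ with no proper special extension) yields an $A$-c.e.\ infinite set $S$ lying inside the union of strictly fewer pieces. That is how the proof of Jockusch's Theorem obtains ``$A_0$ is c.e.\ in $C$''. I would verify that this extraction goes through using only $A\leq^{ui}_T A_i$, rather than uniform introreducibility of the $A_i$ themselves. Once an infinite $A$-computable subset of some $A_i$ is obtained, Step 1 finishes the proof.
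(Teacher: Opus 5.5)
Your Step 1 is correct, but Step 2 aims at a statement that is false, so no positive extraction from the proof of Theorem \ref{decomposition} or of Jockusch's Theorem can deliver it. Take $A=\omega$, a bi-immune set $X$, and the decomposition $A_0=X$, $A_1=\omega\setminus X$. Then $A$ is introreducible, $A\leq^{ui}_T A_0$ and $A\leq^{ui}_T A_1$ via a constant operator, and the union is already nontrivial. Yet $A$ neither computes nor enumerates an infinite subset of either piece. In particular neither piece is c.e.\ in $A$, so your ``cleaner variant'' fails too. (The same happens for any uniformly introreducible $A$ split into $\{p_A(k):k\in X\}$ and its complement in $A$, with $X$ bi-immune relative to $A$.)

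The reason is that Jockusch's Theorem assumes the pieces encode \emph{themselves} ($A_i\leq^{ui}A_i$), while here they only encode $A$. Likewise, the proof of Theorem \ref{decomposition} produces the set $S$ inside fewer pieces only as a step toward contradicting ``$A$ is not $C$-c.e.''. It needs some $m\in A$ with $T\nproves A(m)=1$ to get started. With $C=A$ such an $m$ need not exist: in the example above, $T$ proves every $A(m)=1$ outright, so there is no special formula at all.

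The missing idea is that the uniformly introreducible subset should not be sought among $A$-computable sets; it should be built to carry the decomposition itself. The paper fixes an $A$-computable bijection $\pi:(2^{<\omega})^{n+1}\to A$. It lets $\hat{A}$ be the set of codes $\pi(\sigma_0,\dots,\sigma_n)$ with $|\sigma_0|=\cdots=|\sigma_n|$ and $\sigma_j\prec A_j$ for every $j$. This set is infinite (one element per length), so $\hat{A}\cap A_i$ is infinite for some $i$. Any infinite $C\subseteq\hat{A}\cap A_i$ computes $A$ via $\Phi_i$ (your Step 1), hence computes $\pi$. It then decodes from its own elements arbitrarily long initial segments of every $A_j$, and so computes $\hat{A}\cap A_i$, all uniformly in $C$.

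Note that this subset computes $A_0\oplus\cdots\oplus A_n$ and can be strictly above $A$ (in the example it computes $X$). Your Step 2, by contrast, would have produced a uniformly introreducible subset Turing equivalent to $A$. That is a stronger conclusion, of the kind item (2) of Open Questions \ref{open questions} asks about, and a sign the target was too ambitious.
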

\begin{proof}
Let $A=\bigcup_{i\leq n}A_i$ be a finite decomposition of $A$. Let $(2^{<\omega})^{n+1}$ denote the product of $(n+1)$-many $2^{<\omega}$'s. Fix an $A$-computable bijection $\pi:(2^{<\omega})^{n+1}\to A$. Let 
$$\hat{A}:=\{\pi(\sigma_0,\dots,\sigma_n):|\sigma_0|=\cdots=|\sigma_n|\hs\&\hs \sigma_i\prec A_i\text{ for each }i\leq n\}.$$
Then for some $i\leq n$, $\hat{A}\cap A_i$ is infinite. Say $\hat{A_i}:=\hat{A}\cap A_i$ is infinite. We show that $\hat{A_i}$ is uniformly introreducible. Let $C$ be an infinite subset of $\hat{A_i}$. Then $C$ is an infinite subset of both $\hat{A}$ and $A_i$. So $C$ can uniformly compute $A$ and $\pi$, so $C$ can uniformly compute $A_0,\dots,A_n$ from infinitely many elements of $\hat{A}$. So $C$ can uniformly compute $\hat{A}$ and $A_i$.
\end{proof}
Certainly, Corollary \ref{forcingtool} does not allow one to build an introenumerable or introreducible set with full genericity if there could exist an introenumerable or introreducible set which is not finitely decomposable. And the above proposition shows that every finitely decomposable introreducible set has a uniformly introreducible subset. We end this section by noting that for an introenumerable set $A$, if one can hyperarithmetically find a uniformity from $A$, then $A$ has a uniformly introenumerable subset---and this will be the case when we build an introenumerable set in Theorem \ref{intro e neither}.
\begin{lem}[\cite{green}]
If $A\leq^{ui}_T B$ and $B\in\Delta^1_1(A)$, then $B$ has a uniformly introreducible subset, which is also $\Delta^1_1(A)$.
\end{lem}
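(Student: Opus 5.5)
We need to prove: if $A\leq^{ui}_T B$ and $B\in\Delta^1_1(A)$, then $B$ has a uniformly introreducible subset which is $\Delta^1_1(A)$.

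The plan is to relativize Jockusch's result (Corollary \ref{jock sol}, via \cite[Corollary 6.9]{jock}) to $A$, and then use the uniformity $A\leq^{ui}_T B$ to push the extra information back into subsets of $B$. First fix a Turing operator $\Phi$ with $\Phi^C=A$ for every $C\in[B]^\omega$. Since $B$ is infinite and $\Delta^1_1(A)$, Solovay's theorem relativized to $A$ (Corollary \ref{hyp maj} relative to $A$) provides a function $g\leq_T A\oplus B$, for instance a uniform modulus, such that $B\leq^{um}_T g\oplus A$ via some operator $\Psi$. Concretely, we may take $g$ to be a modulus of $B$ relative to $A$: any $h\geq g$ computes $B$ uniformly together with $A$.

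Next we thin out $B$ following Jockusch's construction. Let $b_0<b_1<\cdots$ enumerate $B$, and define an $A\oplus B$-computable subset $D=\{d_0<d_1<\cdots\}\subseteq B$ by choosing $d_{n+1}\in B$ larger than $g(\langle d_0,\dots,d_n\rangle)$, or more simply larger than $g$ evaluated on all arguments up to $d_n$, so that $p_D$ dominates $g$ in a self-reading way. Then $D\leq_T A\oplus B$, and $A\oplus B$ is $\Delta^1_1(A)$, so $D$ is $\Delta^1_1(A)$. To show that $D$ is uniformly introreducible, fix $C\in[D]^\omega$. From $C\subseteq B$ we compute $A=\Phi^C$ uniformly. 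Listing $C$ increasingly as $c_0<c_1<\cdots$, the function $n\mapsto c_{n+1}$ majorizes $g$ on a suitable range by the choice of the gaps, so $C\oplus A$ computes, uniformly, a function $h\geq g$. Hence $\Psi^{h\oplus A}=B$ is computed uniformly. Finally, $D$ is obtained from $A\oplus B$ by the fixed procedure above, so $C$ computes $D$ uniformly.

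The main obstacle is the sparseness step: we must ensure that an arbitrary infinite $C\subseteq D$ yields a function majorizing $g$ everywhere, not merely on the elements of $C$. Following Jockusch's trick, we choose $d_{n+1}>\max_{x\leq d_n} g(x)$. Then for any $x$, take the least $c\in C$ with $c>x$ and the next element $c'$ of $C$; since $c$ lies at or above some $d_n\geq x$ and $c'\geq d_{n+1}$, we get $c'>g(x)$. Thus $h(x):=$ the second element of $C$ above $x$ majorizes $g$ and is uniformly computable from $C$. We also need $g$ nondecreasing, which we can arrange by replacing $g$ by its running maximum. The last point to check is that the modulus $g$ lies in $\Delta^1_1(A)$; this follows from relativizing Solovay's characterization, since $B$ is $\leq_T A^{(\beta)}$ for some $\beta<\omega_1^{A}$, so we can take $g$ to be the settling-time function of $A^{(\beta)}$, which is itself computable in $A^{(\beta)}$.
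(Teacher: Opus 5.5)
You have the right strategy: relativize to $A$ Jockusch's thinning argument behind Corollary \ref{jock sol}, and recover $A$ from any $C\in[D]^\omega$ via $\Phi$. The paper only cites this lemma from \cite{green}, and this is the standard route. Your sparseness trick is also correct. If $c=d_n$ is the least element of $C$ above $x$ and $c'$ is the next one, then $c'\geq d_{n+1}>\max_{y\leq d_n}g(y)\geq g(x)$.

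\textbf{The gap.} It is in recovering $D$ from $C$. You assert that the modulus can be chosen with $g\leq_T A\oplus B$, and you finish with ``$D$ is obtained from $A\oplus B$ by the fixed procedure, so $C$ computes $D$.'' That assertion is false in general. Take $A$ computable, so $A\leq^{ui}_T B$ holds trivially. Take $B$ a noncomputable hyperarithmetic set of hyperimmune-free degree, for instance one below $0''$ given by the hyperimmune-free basis theorem. A modulus $g\leq_T B$ would then be majorized by a computable function, and that function would compute $B$. Since $D$ is built from $g$, knowing $A$ and $B$ does not let $C$ rebuild $D$ unless $C$ can also rebuild $g$.

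\textbf{The repair.} Your last paragraph already contains it, but you use it only to get $D\in\Delta^1_1(A)$; it has to enter the recovery step as well.
\begin{itemize}
\item Choose $g\leq_T A^{(\beta)}$ to be a uniform modulus for $A^{(\beta)}$ itself relative to $A$, not merely for $B$. That is, $A^{(\beta)}$ is uniformly computable from $A\oplus h$ for every $h\geq g$. This is Solovay's fact quoted at the start of Section 2, applied to $0^{(\beta)}$ itself and relativized to $A$.
\item Such a $g$ is built by effective transfinite recursion through the hierarchy. ``The settling-time function of $A^{(\beta)}$'' alone does not do it: at a successor level, a function above it computes $A^{(\beta)}$ only with help from the previous level, and at limit levels there is no single settling function.
\item Now, for $C\in[D]^\omega$, compute $A=\Phi^C$ and the function $h\geq g$ from $C$. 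Then compute $A^{(\beta)}$ from $A\oplus h$, then both $g$ and $B$ from $A^{(\beta)}$, and finally $D$. Every step is uniform.
\item Since $D\leq_T A^{(\beta)}$, you get $D\in\Delta^1_1(A)$.
\end{itemize}
Once this is in place, your second-element trick is optional. Building $D$ with $p_D\geq g$ already suffices, because $p_C\geq p_D$ for every $C\subseteq D$. In other words, $D$ is uniformly majorreducible relative to $A$, which is the relativized Jockusch statement.
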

\begin{prop}[\cite{green}]
Assume $A$ is introenumerable and $A\leq^{ui}_e B$. Then there is $C\in[B]^\omega$ such that $C\in\Delta^1_1(B)$ and $A\leq^{ui}_T C$.
\end{prop}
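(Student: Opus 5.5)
The plan is a Mathias-style construction carried out hyperarithmetically in $B$. Let $\Theta$ witness $A\leq^{ui}_e B$, so that $\Theta(D)=A$ for every $D\in[B]^\omega$. This already gives $A\leq_e B$, hence $A$ is $B$-c.e.\ and $A\leq_T B'$. Positive information about $A$ is therefore available uniformly from any infinite subset of $B$. What the proof must supply is negative information: a uniform procedure that, given $D\in[C]^\omega$ and $n$, can tell when to stop waiting for $n$ to appear in $\Theta(D)$. The idea is to make $C$ so sparse, and so homogeneous with respect to the relevant finite patterns, that the elements of any infinite $D\subseteq C$ serve as clocks bounding the search.

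\emph{Step 1 (homogenization).} For each $n$, consider the $B$-computable tree $T_n$ of finite increasing sequences $F$ from $B$ with $n\notin\Theta(F)$. If $n\in A$, then $T_n$ has no infinite path, since every infinite subset of $B$ enumerates $n$. The sets $\{D: n\in\Theta(D)\}$ are open, so a Galvin--Prikry/Nash-Williams argument applies. I will use its effective form: for open partitions given by a $B$-computable front, there is a homogeneous set that is $\Delta^1_1(B)$, by Solovay's analysis of the ranks of the well-founded trees involved, which are $B$-computable ordinals. Iterating along $n$ gives reservoirs $B\supseteq H_0\supseteq H_1\supseteq\cdots$, each $\Delta^1_1(B)$ uniformly in $n$. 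On $H_n$, for every $m\le n$, one of two things holds. Either $m\in A$ and $m\in\Theta(F)$ for every $F\subseteq H_n$ with $|F|$ beyond some fixed finite bound. Or $m\notin A$, and then no finite subset of $H_n$ enumerates $m$.

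\emph{Step 2 (diagonal intersection and coding).} Set $c_{k+1}=$ the least element of $H_{c_k}$ larger than $c_k$, and let $C=\{c_0<c_1<\cdots\}$. Since the sequence $(H_n)$ is uniformly $\Delta^1_1(B)$, the set $C$ is $\Delta^1_1(B)$, and of course $C\in[B]^\omega$. Fix $D\in[C]^\omega$ and $n$, and choose $d\in D$ with $d\ge n$. Every element of $D$ above $d$ lies in $H_d$. So by Step 1, $n\in A$ if and only if $n\in\Theta(F)$, where $F$ is the set of the next $b_d$ elements of $D$ after $d$ and $b_d$ is the homogeneity bound attached to $d$. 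This is a finite check carried out uniformly in $D$, which gives $A\leq^{ui}_T C$. It remains to make $b_d$ recoverable from $D$. For this I will, in addition, thin each reservoir so that its elements exceed $b_{c_k}$; since $b_d\le d$ for the chosen $d$ once sparsity is imposed, the decision procedure only needs to examine $D\cap[0,\text{next element}]$.

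\emph{Main obstacle.} The main obstacle is Step 1. Open sets need not be homogeneous with a uniform finite bound: a front on $H$ can be Schreier-like, with no fixed size beyond which every $F$ enumerates $n$. So "homogeneous" must be strengthened to a bound that depends on $\min F$, and that dependence must itself be absorbed by the sparsity of $C$. This is also where introenumerability of $A$, and not just $A\leq^{ui}_e B$, must enter. It is needed for the case $m\notin A$: there we must argue, via the failed-forcing dichotomy of Lemma~\ref{tree} applied relative to $B$, that we can pass to a $\Delta^1_1(B)$ reservoir on which no finite subset is ever "mistaken" about $m$. I would first try to run the dichotomy of Lemma~\ref{tree} inside $\Delta^1_1(B)$ Mathias conditions, deciding each $m$ one at a time and using that $A$ is $\Delta^1_1(B)$ (because $A\le_T B'$) to know which side of the dichotomy to pursue. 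I would then check that the resulting sequence of reservoirs remains uniformly hyperarithmetic in $B$.
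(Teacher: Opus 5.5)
You have a genuine gap, and it sits where you flag the main obstacle. Since $A\leq^{ui}_e B$ already supplies the positive half, the whole content of $A\leq^{ui}_T C$ is certifying $m\notin A$ uniformly from an arbitrary $D\in[C]^\omega$, i.e.\ making $\overline{A}\leq^{ui}_{\text{c.e.}}C$. Nothing in the proposal does this. (The paper only quotes this result from Greenberg, Harrison-Trainor, Patey and Turetsky, so your argument has to stand on its own.)

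\textbf{Step 1 is vacuous, and its conclusion is false.} Because $\Theta(D)=A$ for every $D\in[B]^\omega$, each open set $\{D: m\in\Theta(D)\}$ either contains all of $[B]^\omega$ or misses it. So $B$ is already homogeneous, and Galvin--Prikry/Nash-Williams thinning gains nothing. Your second alternative holds on $B$ outright, since $\Theta(F)\subseteq\Theta(B)=A$ for every finite $F\subseteq B$.

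Your first alternative, a fixed size bound, fails in general. So does the repair with a bound depending on $\min F$. Take $A=B=\omega$, and let $\Theta(F)=\omega$ if there are $x<y$ in $F$ with $|F\cap[y,\infty)|>y$, and $\Theta(F)=\emptyset$ otherwise.
\begin{itemize}
\item Every infinite set enumerates $\omega$.
\item Yet for every infinite $H$, every $x\in H$ and every $N$, let $S$ be $N$ elements of $H$ above $\max(x,N)$. Then $F=\{x\}\cup S$ has $\min F=x$ and $|F|=N+1$, but enumerates nothing.
\end{itemize}
In particular, your Step 2 rule (examine the next $b_d$ elements after $d$, with $b_d$ read off from the next element of $D$) fails here however sparse $C$ is.

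What you actually need is, for each $m$, a stopping rule computable from $D$ uniformly in $m$. This is a computable front such that every member of it lying inside $C$ already $\Theta$-enumerates $m$ whenever $m\in A$. The fronts $\{F\subseteq B: m\in\Theta(F)\}$ can have the shape of an arbitrary well-founded tree, with ranks bounded only by $\omega_1^B$. Sparsity of $C$ controls growth rates, not such shapes. The proposal gives no way to obtain these rules, and without them Step 2 is not defined.

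\textbf{Introenumerability is never genuinely used.} The place where you planned to use it is the case $m\notin A$. As just noted, there no finite subset of $B$ can ever enumerate $m$, so there is no ``mistake'' to avoid. As written, your argument never uses that $A$ is introenumerable, even though that is exactly what the statement adds to $A\leq^{ui}_e B$.

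Running the dichotomy of Lemma~\ref{tree} inside $\Delta^1_1(B)$ Mathias conditions does not help either. It only ever produces uniform \emph{enumerations} of $A$, i.e.\ more positive information. The missing idea is a genuine source of negative information about $A$ available to every infinite subset of $C$, and your sketch contains nothing toward it.
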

\begin{cor}\label{hyp cor}
Assume $A$ is introenumerable and there is $A_0\in[A]^\omega$ such that $A_0\in\Delta^1_1(A)$ and $A\leq^{ui}_e A_0$. Then $A$ has a uniformly introreducible subset.
\end{cor}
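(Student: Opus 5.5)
The plan is to chain the two results from \cite{green} stated just above. Start with $A$ introenumerable and $A_0\in[A]^\omega$ such that $A_0\in\Delta^1_1(A)$ and $A\leq^{ui}_e A_0$.

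The first step applies the Proposition from \cite{green} with $B:=A_0$. Its hypotheses are that $A$ is introenumerable and $A\leq^{ui}_e A_0$, both of which we have. It yields $C\in[A_0]^\omega$ with $C\in\Delta^1_1(A_0)$ and $A\leq^{ui}_T C$.

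The second step transfers hyperarithmeticity. We have $C\in\Delta^1_1(A_0)$ and $A_0\in\Delta^1_1(A)$. Since $\Delta^1_1$-relative reducibility (hyperarithmetic reducibility) is transitive, we get $C\in\Delta^1_1(A)$. This is the one place that needs a standard fact not stated in the paper, namely transitivity of $\leq_h$. I expect it to be the only point needing comment, and it is routine.

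The third step applies the Lemma from \cite{green} to the pair $(A,C)$. We have $A\leq^{ui}_T C$ and $C\in\Delta^1_1(A)$, so $C$ has a uniformly introreducible subset $D$ with $D\in\Delta^1_1(A)$. Since $D\subseteq C\subseteq A_0\subseteq A$, $D$ is a uniformly introreducible subset of $A$, which is what the corollary claims.

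A subtle point is the orientation of the Lemma: it is stated with roles $A\leq^{ui}_T B$ and $B\in\Delta^1_1(A)$. We instantiate $B$ as $C$, and the conclusion concerns a subset of $C$, which is exactly what we need. No further obstacle is expected.
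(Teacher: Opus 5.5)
Your proposal is correct and is the argument the paper intends; the paper states the corollary without proof, directly after the two cited results from \cite{green}. You apply the Proposition with $B=A_0$ to get $C\in[A_0]^\omega$ with $C\in\Delta^1_1(A_0)$ and $A\leq^{ui}_T C$, use transitivity of hyperarithmetic reducibility to get $C\in\Delta^1_1(A)$, and then apply the Lemma to $C$ to obtain a uniformly introreducible $D\subseteq C\subseteq A$; the transitivity step is the only unstated ingredient, and it is standard.
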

 \section{Remaining separations}
 We complete the justification for Figure \ref{figure1} in the introduction in this section. Jockusch showed that introreducibility $+$ uniform introenumerability implies uniform introreduciblity \cite[Theorem 5.3]{jock}; we will refer to this result as Jockusch's Intersection Theorem. He claims that a similar thing is true for major-T/e sets. We present a proof for completeness.
 \begin{thrm}\label{majorjock}
 If $A$ is majorreducible and uniformly majorenumerable, then it is uniformly majorreducible.
 \end{thrm}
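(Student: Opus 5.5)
The plan is to imitate the proof of Theorem \ref{pi11}: a Hechler-type forcing over functions majorizing $p_A$ extracts one fixed Turing reduction together with one finite stem, and the uniform enumeration operator for $A$ is used to repair the non-uniformity. Fix an enumeration operator $\Theta$ with $\Theta(\text{graph}(f))=A$ for every $f\geq p_A$. Since $A$ is positively enumerated uniformly from any such $f$, it suffices to produce one Turing functional $\Psi$ such that $\Psi^f$ enumerates $\ol{A}$ (equivalently, decides $A$) for every $f\geq p_A$.

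\emph{Step 1.} Let $\p$ be the forcing from the proof of Theorem \ref{pi11} with $f$ replaced by $p_A$. Its conditions are pairs $(\sigma,g)$ with $\sigma\geq g\geq p_A$, ordered by extension of $\sigma$ and domination of $g$. For a sufficiently generic $G$ we have $G\geq p_A$, so $A\leq_T G$ by majorreducibility. Hence some $e$ and some $(\sigma,g)\in\G$ force $\Phi_e^G=A$. Unwinding the forcing relation as in Theorem \ref{pi11} yields a correctness property: if $\tau\succ\sigma$, $\tau\geq h$ for some $h$ with $h\geq g$, and $\Phi_e^\tau(n)\downarrow$, then $\Phi_e^\tau(n)=A(n)$. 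Otherwise we could extend to $(\tau,\max(h,\tau))$ and force a wrong value.

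\emph{Step 2.} Define $\Psi^f(n)$ for arbitrary $f\geq p_A$ as a race between two searches. The first search waits for $n$ to appear in $\Theta(\text{graph}(f))$, and then outputs $1$. The second search looks for a string $\tau\succ\sigma$ with $\tau\geq f$ on $\dom(\tau)\setminus\dom(\sigma)$ and $\Phi_e^\tau(n)\downarrow=0$. When such a $\tau$ is found, it outputs $0$ only after confirming that $n\notin\Theta(\text{graph}(\tau^\frown f'))$ up to the relevant stage, where $f'$ is the tail of $f$. The point of using $\Theta$ is that $\tau^\frown f'$ again majorizes $p_A$ (since $\tau\geq f\geq p_A$ off $\sigma$, and $\sigma\geq p_A$). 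Hence $\Theta$ applied to it must enumerate exactly $A$, and this gives a uniform way to refute a false claim ``$n\notin A$'' made by $\tau$. Totality of the second search when $n\notin A$ comes from Step 1 applied to a generic extension of $(\sigma,\max(g,f))$: some $\tau\geq\max(g,f)$ gives $\Phi_e^\tau(n)=0$.

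\emph{Main obstacle.} The delicate step is correctness. A string $\tau$ that dominates $f$ but not $g$ is not covered by the forcing relation, so it might output $0$ for some $n\in A$. Therefore the acceptance rule in Step 2 must be arranged so that such spurious $\tau$ are always killed.

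My first attempt is to strengthen Step 1 by a density argument showing that $g$ can be taken to be $p_A$ itself. Then every $\tau\succ\sigma$ that dominates $f$ (hence $p_A$) off $\sigma$ falls under the correctness property, and the $\Theta$-check becomes unnecessary. To get this, I would argue with the same failed-forcing reasoning as in Lemma \ref{tree}. If no condition of the form $(\sigma,p_A)$ forces correctness of any $\Phi_e$, one can build $h\geq p_A$ that defeats every $\Phi_e$. This contradicts majorreducibility, with $\Theta$ guaranteeing that the diagonalizing extensions can always be continued above $p_A$.

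If that fails, the fallback is the race with $\Theta$ described in Step 2. In that case I would prove that a false $0$-answer at $\tau$ is always eventually contradicted by $n$ entering $\Theta(\text{graph}(\tau^\frown f'))$, by choosing the search over $\tau$ in a dovetailed order.
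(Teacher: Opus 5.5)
There is a genuine gap, and it comes from the Hechler side function. The correctness you extract in Step 1 only covers strings $\tau\succ\sigma$ with $\tau\geq g$, and $g$ is not something a procedure with oracle $f$ can test.

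The $\Theta$-check in Step 2 cannot repair this. Since $\tau^\frown f'\geq p_A$, the set $\Theta(\text{graph}(\tau^\frown f'))$ is just $A=\Theta(\text{graph}(f))$. A positive enumeration of $A$ never certifies $n\notin A$ at a finite stage. So nothing prevents a spurious $\tau$ from appearing before $n$ is enumerated: such a $\tau$ dominates $f$ off $\sigma$ but not $g$, and has $\Phi_e^\tau(n)=0$ for some $n\in A$. No stage bound or dovetailing order helps, because the only information $\Theta$ supplies is positive.

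Your ``first attempt'' does not close the gap either. If some $(\sigma,p_A)$ forced $\Phi_e^G=A$, then searching for $\tau\succ\sigma$ with $\tau\geq f$ off $\sigma$ and $\Phi_e^\tau(n)\downarrow$ would already be a uniform reduction that never uses $\Theta$. For totality, extend to $(\sigma,g')$ with $g'=\sigma$ on $\dom(\sigma)$ and $g'=f$ elsewhere. So that claim is the whole theorem, and your density sketch does not prove it:
\begin{itemize}
\item After one diagonalization step you are at some $(\tau_0,h_0)$ with $h_0\neq p_A$, and your hypothesis says nothing about such conditions.
\item ``Continuing above $p_A$'' is trivially possible, so it is not a real use of $\Theta$.
\item For the set of Corollary \ref{majortnotuniform} (majorreducible, not uniformly so), no $(\sigma,p_A)$ forces correctness, yet no $h\geq p_A$ defeats every $\Phi_e$.
\end{itemize}
So the inference you need fails unless $\Theta$ enters essentially.

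The missing idea is to drop the side function and work with plain strings $\sigma\geq p_A$, as the paper does. Then $\Theta$ serves to \emph{recognize} such strings. Since $\Theta(\text{graph}(f))=A$ for every $f\geq p_A$, the relation $\tau\geq p_A$ is uniformly $\Sigma^0_1$ in $f$. It holds as soon as $\tau\geq p_D$ for some already enumerated finite $D\subseteq A$ with $|D|\geq|\tau|$.

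The paper argues contrapositively. Suppose some stem $\sigma_n\geq p_A$ and functional $\Phi_n$ satisfy two conditions: no $\tau\succ\sigma_n$ with $\tau\geq p_A$ gives a wrong value, and for every $m$ every such $\tau$ has an extension above $p_A$ on which $\Phi_n(m)$ converges. Then, for any $f\geq p_A$, do the following: search over \emph{all} $\tau\succ\sigma_n$ certified $\geq p_A$ via $\Theta(\text{graph}(f))$, not just those dominating $f$, and output $\Phi_n^\tau(m)$. This decides $A$ uniformly. Hence if $A$ is not uniformly majorreducible, one of the two diagonalization cases applies at every $n$, and this builds $G\geq p_A$ with $A\not\leq_T G$.

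Your Step 1 correctly extracts a stem and an index from majorreducibility. But only forcing with plain strings above $p_A$ gives correctness and convergence for exactly the strings that $\Theta$ lets you recognize.
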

 \begin{proof}
 We show that if $A$ is uniformly majorenumerable and not uniformly majorreducible, then it is not majorreducible. We will build $g\geq p_A$ such that $g\ngeq_T A$ by building its initial segments. Let $\sigma_0$ be the empty string. Having defined $\sigma_n\in\omega^{<\omega}$, define $\sigma_{n+1}$ as follows:
 \begin{enumerate}
 \item If there is $\sigma\succ\sigma_n$ with $\sigma\geq p_A$ (where $\geq$ is as in Theorem \ref{pi11}) such that $\Phi_n^{\sigma}$ disagrees with $A$ on some input, let $\sigma_{n+1}=\sigma$.
 \item If there is $\sigma\succ\sigma_n$ with $\sigma\geq p_A$ and an $m\in A$ such that no $\tau\succ\sigma$ with $\tau\geq p_A$ have $\Phi_n^\tau(m)\downarrow$, then let $\sigma_{n+1}=\sigma$.
 \end{enumerate}
 We claim that one of the two cases above must happen for all $n$. If $\sigma_{n+1}$ could not have been defined using the above algorithm, then for all $m$, $m\in A$ if and only if there exists $\sigma\succ\sigma_n$ with $\sigma\geq p_A$ such that $\Phi_{n}^\sigma(m)\downarrow=1$. Likewise, if $\sigma_{n+1}$ could not have been defined, then $m\notin A$ if and only if there exists $\sigma\succ\sigma_n$ with $\sigma\geq p_A$ such that $\Phi_n^\sigma(m)\downarrow=0$. Let $\Gamma$ be an enumeration operator witnessing the uniform majorenumerability. Let $g$ be a function majorizing $p_A$. Then for all $m$,
 $$m\in A\liff \exists\sigma,s(\sigma\succ\sigma_n\wedge\sigma\geq p_{\Gamma^g_s}\wedge\Phi_{n,s}^\sigma(m)=1)$$
 and
 $$m\notin A\liff \exists\sigma,s(\sigma\succ\sigma_n\wedge\sigma\geq p_{\Gamma^g_s}\wedge\Phi_{n,s}^\sigma(m)=0)$$
 which shows that $A$ is uniformly majorreducible, contradicting our assumption.
 Having shown the claim, taking $\bigcup_n\sigma_n=G$, this function majorizes $p_A$ and does not compute $A$.
 \end{proof}
 \begin{cor}\label{iT+ume}
 If $A$ is introreducible and uniformly majorenumerable, then it is uniformly majorreducible.
 \end{cor}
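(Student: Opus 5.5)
The plan is to reduce Corollary \ref{iT+ume} to Theorem \ref{majorjock}. That theorem says that majorreducible plus uniformly majorenumerable implies uniformly majorreducible, so it suffices to show that an introreducible, uniformly majorenumerable set $A$ is majorreducible. Fix an arbitrary $f\geq p_A$. The aim is to show $A\leq_T f$ by producing, computably in $f$, an infinite subset of $A$; introreducibility of $A$ then gives $A\leq_T f$.

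The first step uses uniform majorenumerability. Let $\Gamma$ be the enumeration operator with $\Gamma(g)=A$ for every $g\geq p_A$ (oracles taken as graphs). Then $\Gamma(f)=A$, so $A$ is c.e.\ in $f$: enumerate the graph of $f$ and apply $\Gamma$. An infinite $f$-c.e.\ set has an infinite $f$-computable subset $C$, obtained by listing the elements in increasing order of appearance and keeping each one that exceeds all previously kept ones.

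The second step uses introreducibility. Since $C\in[A]^\omega$, we get $A\leq_T C\leq_T f$. As $f\geq p_A$ was arbitrary, $A$ is majorreducible. Theorem \ref{majorjock} then applies directly and shows $A$ is uniformly majorreducible.

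No step is a real obstacle. The only point to check is that the enumeration from $f$ is legitimate, i.e.\ that enumerating $\Gamma(f)$ only needs positive information about the graph of $f$. This holds because the graph of a total function is computable from $f$.
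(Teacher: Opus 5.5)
Your proof is correct and follows essentially the same route as the paper's. Both reduce to Theorem \ref{majorjock} by showing that $A$ is majorreducible: for any $f\geq p_A$, $A$ is c.e.\ in $f$, so $f$ computes an infinite subset of $A$, and introreducibility then gives $A\leq_T f$.
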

 \begin{proof}
 By the theorem, it suffices to show that such $A$ is majorreducible. Given a function $f\geq p_A$, we know that $f$ can compute an infinite subset of $A$ as $f\geq_e A$. By the introreducibility, any infinite subset of $A$ computes $A$.
 \end{proof}
 Moreover, this proof also shows that any majorenumerable set that is introreducible is actually majorreducible.
 \begin{thrm}
 Majorreducible sets are exactly the sets that are both majorenumerable and introreducible.
 \end{thrm}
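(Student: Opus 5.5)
The theorem is an equality of two classes, so I will prove the two inclusions separately.

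For the forward direction, let $A$ be majorreducible. Every $f\geq p_A$ computes $A$, hence is enumeration above $A$, so $A$ is majorenumerable; this is immediate from the definitions. It is also introreducible, by the observation in the introduction: if $C\in[A]^\omega$, then $p_C\geq p_A$, since the $n$th element of a subset is at least the $n$th element of $A$. So the graph of $p_C$ computes $A$. Because $C$ computes $p_C$, we get $A\leq_T C$.

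For the reverse direction, let $A$ be majorenumerable and introreducible, and fix $f\geq p_A$. The argument follows the proof of Corollary \ref{iT+ume}, with the uniform hypothesis weakened to the nonuniform one. By majorenumerability, $A\leq_e f$, where the oracle is the graph of $f$. Since the graph of a total function is enumeration equivalent to its Turing degree, $A$ is $f$-c.e. The set $A$ is infinite, so it is nonempty and $f$ computes a one-to-one enumeration of it. From that enumeration $f$ computes an infinite subset $C_0\in[A]^\omega$, by keeping only the elements that exceed every element listed before them, i.e.\ the standard proof that an infinite c.e.\ set has an infinite computable subset, relativized to $f$. By introreducibility, $A\leq_T C_0\leq_T f$. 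As $f\geq p_A$ was arbitrary, $A$ is majorreducible.

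The only point that needs care is the passage from $A\leq_e f$ to ``$A$ is $f$-c.e.''. This is where the convention that the oracle is the graph of $f$ matters. The graph of $f$ is $f$-computable, so any enumeration operator applied to it yields an $f$-c.e.\ set. Beyond this, both directions are routine, and no uniformity is needed anywhere; this is why the statement can drop the uniform hypothesis of Theorem \ref{majorjock}.
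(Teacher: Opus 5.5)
Your proof is correct and follows the paper's own route. The forward direction is the introduction's observation that $p_C\geq p_A$ for $C\in[A]^\omega$. The reverse direction is exactly the inner argument of Corollary \ref{iT+ume}: since $f\geq_e A$, $f$ computes an infinite subset of $A$, which computes $A$ by introreducibility. The paper likewise remarks that this argument never uses uniformity.
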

 \begin{lem}\label{semirecursivemajor}
 Let $A$ be immune and semicomputable\footnote{A set $A$ is semicomputable if there exists a computable function $\alpha$ such that for all $x,y\in\omega$, 1) $\alpha(x,y)\in\{x,y\}$ and 2) if either $x\in A$ or $y\in A$, then $\alpha(x,y)\in A$.}. Then there is an enumeration operator $\Gamma$ such that $\Gamma^f$ is an infinite subset of $A$ for any $f\geq p_A$.
 \end{lem}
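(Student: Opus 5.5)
The plan is to exploit the selector function of semicomputability: it picks out an element of $A$ from any finite set that meets $A$. We then use $f\geq p_A$ to guarantee that the finite sets we feed it always contain fresh elements of $A$. Fix a computable $\alpha$ witnessing that $A$ is semicomputable. For a finite nonempty set $F=\{x_0<x_1<\cdots<x_k\}$, define its \emph{$\alpha$-winner} $w(F)$ by folding: $w_0=x_0$, $w_{j+1}=\alpha(w_j,x_{j+1})$, and $w(F)=w_k$. The map $F\mapsto w(F)$ is computable (on canonical indices). The first step is the observation that if $F\cap A\neq\emptyset$ then $w(F)\in A$. Indeed, if $j_0$ is least with $x_{j_0}\in A$, then $w_{j_0}\in A$: either $j_0=0$, or $w_{j_0}=\alpha(w_{j_0-1},x_{j_0})\in A$ by the definition of semicomputability. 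Then inductively $w_j\in A$ for all $j\geq j_0$, since $w_j\in A$ forces $\alpha(w_j,x_{j+1})\in A$.

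Next, given $f\geq p_A$, define recursively
$$b_n:=w\bigl([0,f(n)]\setminus\{b_0,\dots,b_{n-1}\}\bigr).$$
Since $f(n)\geq p_A(n)$, the interval $[0,f(n)]$ contains the $n+1$ elements $p_A(0),\dots,p_A(n)$ of $A$. Removing the $n$ numbers $b_0,\dots,b_{n-1}$ leaves at least one element of $A$, so the set is nonempty and meets $A$, and by the first step $b_n\in A$. The $b_n$ are pairwise distinct by construction, so $\{b_n:n\in\omega\}$ is an infinite subset of $A$.

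Finally, observe that $b_n$ depends only on the finitely many values $f(0),\dots,f(n)$, and it is computed from them uniformly. So let $\Gamma$ be the c.e.\ set of pairs $\langle u,b\rangle$ such that:
\begin{itemize}
\item $D_u=\{\langle k,v_k\rangle:k\leq n\}$ is the graph of a function on $\{0,\dots,n\}$ for some $n$;
\item $b$ is the value $b_n$ produced by the above recursion using $v_0,\dots,v_n$ in place of $f(0),\dots,f(n)$.
\end{itemize}
When the oracle is the graph of a total function $f$, the only $D_u$ contained in it are genuine initial segments of that graph. Hence $\Gamma^f=\{b_n:n\in\omega\}$, which is an infinite subset of $A$ whenever $f\geq p_A$.

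I do not expect a serious obstacle here. The only points needing care are the invariant in the folding argument (that membership in $A$ is preserved once reached, rather than requiring both inputs to be in $A$) and the counting that guarantees a fresh element of $A$ survives the removal of previously chosen $b_i$'s. The immunity hypothesis does not appear to be needed for this lemma itself; presumably it is used where the lemma is applied.
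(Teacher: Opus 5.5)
Your proof is correct and takes essentially the paper's route: both apply the $\alpha$-selector to $[0,f(n)]$, which contains at least $n+1$ elements of $A$, to extract elements guaranteed to lie in $A$. The paper lists $[0,f(n)]$ in an $\alpha$-compatible order and keeps the first $n+1$ entries, whereas you take one fresh $\alpha$-winner per $n$ after discarding earlier picks; your folding invariant spells out the step the paper only sketches, and you are right that immunity is used only to guarantee that $A$ is infinite, so that $p_A$ is total.
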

 \begin{proof}
 Take $A$ immune and semicomputable, and let $\alpha$ be a partial computable function witnessing the semicomputableness. Let $f$ be a function majorizing $p_A$. Note that $[0,f(n)]\cap A$ contains at least $n+1$ elements. We show that for each $n$, we can uniformly list some $n+1$ elements of $A$. Given $n\in\omega$, we can (in some uniform way) list elements of $[0,f(n)]$ as $x_0,\dots,x_N$ such that $\alpha(x_i,x_{i+1})=x_i$ for $i\leq n-1$. Then $x_0,\dots,x_n$ must be in $A$, as otherwise, there exists $j>n$ such that $x_j\in A$ due to the choice of $f$. Then semicomputableness and the way we listed $x_i$'s imply that $x_0,\dots,x_j\in A$ as of a contradiction.
 \end{proof}
 By \cite[Theorem 5.2]{semirecursive}, any nonzero Turing degree $\leq_T 0'$ contains a set $A$ such that both $A$ and $\ol{A}$ are immune and semicomputable.
 \begin{thrm}[Jockusch]\label{hiuie}
 If $A$ is immune and semicomputable, then $A$ is hyperimmune and uniformly introenumerable.
 \end{thrm}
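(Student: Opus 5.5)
We treat the two conclusions separately, using the semicomputable selector $\alpha$ (as in Lemma \ref{semirecursivemajor}) to define a computable linear preorder that makes $A$ an initial segment.

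\emph{Uniform introenumerability.} Let $C\in[A]^\omega$ and let $x\in A$. Since $A$ is immune, $A$ is not c.e., and so no $y\in C$ can be forced to be ``below'' $x$ in every case. The enumeration operator $\Gamma$ we use is: enumerate $x$ into $\Gamma(C)$ as soon as some $y\in C$ appears with $\alpha(x,y)=x$. Soundness is the key point. If $y\in C\subseteq A$ and $\alpha(x,y)=x$, then by semicomputability $\alpha(x,y)\in A$, so $x\in A$. Completeness is where we use immunity. Fix $x\in A$, and suppose that $\alpha(x,y)=y$ for every $y\in C$ with $y\neq x$. Consider the c.e.\ set $S_x=\{y:\alpha(x,y)=y\}$. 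If $x\notin A$ there is nothing to check, so assume $x\in A$. Since $C\subseteq A$ is infinite, $C\setminus\{x\}\subseteq S_x$, and we must derive a contradiction.

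\emph{Main obstacle.} The difficulty is that $S_x$ need not lie inside $A$: if $x\in A$ and $y\notin A$, then $\alpha(x,y)$ must be $x$. Hence every $y\in S_x\setminus\{x\}$ satisfies $\alpha(x,y)=y\in A$ whenever $x\in A$. So when $x\in A$, $S_x\setminus\{x\}\subseteq A$ is a c.e.\ subset of $A$, and by immunity it is finite. This contradicts $C\setminus\{x\}\subseteq S_x$ with $C$ infinite. Therefore $\Gamma(C)=A$ uniformly, and by the remark on $\leq^{ui}_e$ versus $\leq^{ui}_{c.e.}$ this is exactly uniform introenumerability. (We may first replace $\alpha$ by a symmetric version, $\alpha(x,y)=\alpha(\min,\max)$, which is harmless.)

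\emph{Hyperimmunity.} Suppose $A$ is not hyperimmune, so some computable $f$ majorizes $p_A$. By Lemma \ref{semirecursivemajor}, $\Gamma'^{f}$ is then an infinite subset of $A$, and since $f$ is computable this subset is c.e. That contradicts the immunity of $A$. Hence $A$ is hyperimmune, and this half is immediate from the lemma.
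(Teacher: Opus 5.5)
Your proof is correct and follows the paper's argument essentially verbatim. You use the same enumeration operator (enumerate $x$ once some $y\in C$ has $\alpha(x,y)=x$), the same observation that for $x\in A$ the c.e.\ set $\{y:\alpha(x,y)=y\}$ lies inside $A$ and is therefore finite by immunity, and the same appeal to Lemma~\ref{semirecursivemajor} for hyperimmunity. You spell out the soundness check and the reason $S_x\subseteq A$, which the paper leaves implicit. The announced linear preorder and the symmetrization of $\alpha$ are never used and can be dropped.
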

 \begin{proof}
 Let $A$ be immune and semicomputable. To see that $A$ is hyperimmune, assume not, so that there is a computable function $g$ which majorizes $p_A$. Then by Lemma \ref{semirecursivemajor}, we may let $\Gamma$ be an enumeration operator such that $\Gamma^g$ is an infinite subset of $A$. This contradicts the immunity.
 
 To see that $A$ is uniformly introenumerable, let $C\in[A]^\omega$. For each $a\in A$, note that $\{b\in\omega:\alpha(a,b)=b\}$ must be a finite set by the immunity. Hence, for infinitely many elements $c$ of $C$, it must be that $\alpha(a,c)=a$. Hence, the following uniform procedure enumerates $A$: enumerate $a$ if and only if there exists $c\in C$ such that $\alpha(a,c)=a$.
 \end{proof}
 \begin{cor}
 If $A$ is immune and semicomputable, then $A$ is hyperimmune and uniformly majorenumerable.
 \end{cor}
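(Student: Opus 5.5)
Hyperimmunity is already given by Theorem \ref{hiuie}, so the plan is to prove uniform majorenumerability by combining Lemma \ref{semirecursivemajor} with the uniform enumeration procedure from the proof of Theorem \ref{hiuie}. Fix the computable $\alpha$ witnessing semicomputability and the enumeration operator $\Gamma$ of Lemma \ref{semirecursivemajor}. For every $f\geq p_A$ (taken via its graph), $\Gamma^f$ is an infinite subset of $A$. The recipe for $\Gamma$ is uniform: for each $n$, read off $f(n)$, sort $[0,f(n)]$ along $\alpha$, and output the first $n+1$ items.

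Next, define an enumeration operator $\Delta$ that, from an enumeration of any set $C$, enumerates
$$\{a:\exists c\in C\hs \alpha(a,c)=a\}.$$
The proof of Theorem \ref{hiuie} shows that $\Delta(C)=A$ whenever $C\in[A]^\omega$, and the argument is as follows. If $a\in A$, immunity makes $\{b:\alpha(a,b)=b\}$ finite, since it is c.e.\ and, by semicomputability, contained in $A$. Hence infinitely many $c\in C$ satisfy $\alpha(a,c)=a$. Conversely, if $c\in C\subseteq A$ and $\alpha(a,c)=a$, then $a\in A$ by semicomputability. Enumeration operators compose, so $\Delta\circ\Gamma$ is a single enumeration operator with $(\Delta\circ\Gamma)(f)=\Delta(\Gamma^f)=A$ for all $f\geq p_A$. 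That is exactly uniform majorenumerability.

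The main obstacle is checking that Lemma \ref{semirecursivemajor} really yields one fixed operator that does not depend on $f$. This comes down to making the ordering of $[0,f(n)]$ canonical. I would define the ordering effectively, for example by insertion sort under the comparator $\alpha$, which is computable from the finite set $[0,f(n)]$. I would also check that the lemma's argument, that a gap in the chain would force a contradiction, applies to this canonical chain. If it does not, I would replace the lemma's chain argument with the one verified above for $\Delta$: from the list $[0,f(n)]$, enumerate every $a$ such that $\alpha(a,b)=a$ for at least $n+1$ values $b\leq f(n)$ (counting $a$ itself). Because $[0,f(n)]$ contains $n+1$ elements of $A$, any $a$ ranked above all of them in the $\alpha$-preference lies in $A$. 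One still has to control $a$'s lying below those elements, which is the delicate point.
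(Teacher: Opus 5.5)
Your proof is correct and is essentially the paper's: hyperimmunity is quoted from Theorem \ref{hiuie}, and uniform majorenumerability comes from composing the single operator $\Gamma$ of Lemma \ref{semirecursivemajor} with the uniform enumeration $C\mapsto\{a:\exists c\in C\ \alpha(a,c)=a\}$ from the proof of Theorem \ref{hiuie}.

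The ``main obstacle'' you raise is already settled by the lemma's statement, which gives one $\Gamma$ for every $f\geq p_A$. Separately, insertion sort with the symmetrized witness $\alpha(\min(x,y),\max(x,y))$ does produce a chain with $\alpha(x_i,x_{i+1})=x_i$, so the lemma's argument applies. You can therefore drop your fallback. As stated it would fail anyway, since it would enumerate non-members $a$ that merely beat $n+1$ non-members of $A$.
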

 \begin{proof}
 By Lemma \ref{semirecursivemajor} and Theorem \ref{hiuie}, it follows that if $A$ is immune and semicomputable, then any $f\geq p_A$ can uniformly enumerate an infinite subset of $A$, which uniformly enumerates $A$.
 \end{proof}
 \begin{cor}[Jockusch]
 There exist complementary hyperimmune uniformly majorenumerable sets.
 \end{cor}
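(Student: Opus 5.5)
The plan is to combine the existence result for immune semicomputable sets with the preceding corollary. By \cite[Theorem 5.2]{semirecursive}, pick any nonzero Turing degree $\mathbf{a}\leq_T 0'$, for instance $\mathbf{a}=0'$ itself. That result provides a set $A$ of degree $\mathbf{a}$ such that both $A$ and $\ol{A}$ are immune and semicomputable.

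Next, apply the corollary immediately preceding the statement twice. Applied to $A$, it shows that $A$ is hyperimmune and uniformly majorenumerable, with the enumeration operator supplied by Lemma \ref{semirecursivemajor} composed with the uniform procedure from Theorem \ref{hiuie}. Applied to $\ol{A}$, which is also immune and semicomputable, it gives the same conclusion for $\ol{A}$. Hence $A$ and $\ol{A}$ are complementary sets, both hyperimmune and both uniformly majorenumerable, which is exactly the statement.

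Only one point needs checking: semicomputability must pass to the complement. The theorem from \cite{semirecursive} already gives this, since it asserts semicomputability of both $A$ and $\ol{A}$. If one wanted to derive it directly instead, swapping the output of the selector $\alpha$ (setting $\beta(x,y)$ to be the element of $\{x,y\}$ other than $\alpha(x,y)$ when $x\neq y$) yields a selector for $\ol{A}$. So there is no real obstacle beyond invoking the existence theorem. The substantive work, namely that a majorizing function uniformly enumerates an infinite subset, which in turn uniformly enumerates $A$, was already done in Lemma \ref{semirecursivemajor} and Theorem \ref{hiuie}.
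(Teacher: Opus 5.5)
Your proposal is correct and matches the paper's (implicit) argument: use \cite[Theorem 5.2]{semirecursive} to obtain $A$ with both $A$ and $\ol{A}$ immune and semicomputable, then apply the immediately preceding corollary to each of $A$ and $\ol{A}$. Your side check that the swapped selector witnesses semicomputability of the complement is also correct, though unnecessary here since the cited theorem already gives it.
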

 \begin{cor}\label{ume not umt}
 There exists a uniformly majorenumerable set which is not introreducible, and hence not uniformly majorreducible.
 \end{cor}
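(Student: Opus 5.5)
The plan is to take an immune semicomputable set $A$ that is not computable and whose complement is also immune and semicomputable. By \cite[Theorem 5.2]{semirecursive}, every nonzero Turing degree $\leq_T 0'$ contains such a set, so we fix one. By the preceding corollary, $A$ is (hyperimmune and) uniformly majorenumerable. It remains to show that $A$ is not introreducible. Once that is done, $A$ cannot be uniformly majorreducible either, since every uniformly majorreducible set is uniformly introreducible, hence introreducible.

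For non-introreducibility we use \cite[Theorem 2.19]{slaman}: complementary introreducible sets are computable. Apply the preceding corollary, or Theorem \ref{hiuie}, to $\ol{A}$ as well. Since $\ol{A}$ is also immune and semicomputable, it is uniformly introenumerable. If $A$ were introreducible, consider any infinite $C\subseteq\ol{A}$. Such a $C$ enumerates $\ol{A}$ uniformly, and hence computes an infinite subset of $\ol{A}$. That alone does not hand us a subset of $A$, so we argue more carefully.

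The cleanest route is the intersection theorem. Since $A$ is uniformly introenumerable by Theorem \ref{hiuie}, Jockusch's Intersection Theorem \cite[Theorem 5.3]{jock} says that if $A$ were introreducible it would be uniformly introreducible. The same reasoning applies to $\ol{A}$. So it suffices to show that $A$ and $\ol{A}$ are not both introreducible. By \cite[Theorem 2.19]{slaman}, if both were introreducible then $A$ would be computable, contradicting our choice of a nonzero degree. Hence one of $A$, $\ol{A}$ is not introreducible. Call that set $B$. Then $B$ is immune and semicomputable, so it is uniformly majorenumerable but not introreducible, which gives the corollary.

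The main obstacle is that we only learn that one of the two complementary sets fails introreducibility, not which one. The trick above sidesteps this: both sets satisfy the hypotheses of the preceding corollary symmetrically, so whichever one fails introreducibility serves as the witness. No further construction is needed. The only external input is the existence result of \cite{semirecursive} for a noncomputable complementary pair of immune semicomputable sets.
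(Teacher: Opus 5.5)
Your proof is correct, but it takes a different route from the paper's.

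\textbf{Your route.} You invoke the theorem that complementary introreducible sets are computable \cite[Theorem 2.19]{slaman}. Since $A$ is immune, it is not computable, so $A$ and $\ol{A}$ cannot both be introreducible. Whichever one fails is immune and semicomputable, hence uniformly majorenumerable, and it is the witness.

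\textbf{The paper's route.} The paper shows that \emph{neither} $A$ nor $\ol{A}$ is introreducible. Suppose $A$ were introreducible. By Corollary \ref{iT+ume} (introreducible plus uniformly majorenumerable implies uniformly majorreducible), $A$ would be uniformly introreducible. Since $\ol{A}$ is uniformly introenumerable by Theorem \ref{hiuie}, Jockusch's \cite[Corollary 3.10]{jock} would make one of $A,\ol{A}$ c.e., which contradicts immunity. The same argument applies to $\ol{A}$.

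\textbf{Comparison.} Your argument is shorter and avoids the Section 5 machinery (Theorem \ref{majorjock} and its corollary). The cost is that it rests on a comparatively heavy external theorem and cannot tell you which of the two sets is the witness. The paper's argument gives the symmetric, stronger conclusion and uses only immunity. Contrary to your closing remark, \cite[Theorem 2.19]{slaman} is also an external input, and it is the heavier of your two.

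\textbf{An idle step.} Your paragraph invoking Jockusch's Intersection Theorem does no work as written. The ``so it suffices'' sentence does not follow from it, though the reduction is true for trivial reasons. Had you paired the Intersection Theorem with \cite[Corollary 3.10]{jock}, you would have obtained the paper's stronger conclusion directly. That is essentially the argument of Proposition \ref{neither}.

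\textbf{A minor point.} Choosing a nonzero degree is unnecessary, since immunity of $A$ already rules out computability.
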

 \begin{proof}
 Let $A$ and $\ol{A}$ be both immune and semicomputable sets. Then neither of them are introreducible. For if $A$ (or $\ol{A}$) is introreducible, then $A$ (or $\ol{A}$) would be uniformly majorreducible by Corollary \ref{iT+ume}. Recall that Jockush showed that if $A$ is uniformly introreducible and $\ol{A}$ is uniformly introenumerable, then one of them is c.e.\ \cite[Corollary 3.10]{jock}. So this cannot happen due to the immunity. 
 \end{proof}
 \begin{cor}
 There exists a majorenumerable set which is not majorreducible.
 \end{cor}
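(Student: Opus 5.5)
Take $A$ with $A$ and $\ol{A}$ both immune and semicomputable. Such sets exist: by \cite[Theorem 5.2]{semirecursive}, any nonzero Turing degree $\leq_T 0'$ contains one. We show that $A$ witnesses the statement.

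\emph{$A$ is majorenumerable.} By the corollary right after Theorem \ref{hiuie}, $A$ is uniformly majorenumerable. Concretely, Lemma \ref{semirecursivemajor} gives a single enumeration operator sending every $f\geq p_A$ to an infinite subset of $A$. Composing it with the uniform introenumeration procedure from Theorem \ref{hiuie} (enumerate $a$ iff $\alpha(a,c)=a$ for some enumerated $c$) shows that every such $f$ enumerates $A$. In particular $A$ is majorenumerable.

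\emph{$A$ is not majorreducible.} Suppose it were. Every majorreducible set is introreducible, since any $C\in[A]^\omega$ has $p_C\geq p_A$, so $p_C$ computes $A$, and $C$ computes $p_C$. Thus $A$ would be introreducible and uniformly majorenumerable, and Corollary \ref{iT+ume} would make $A$ uniformly majorreducible, hence uniformly introreducible. By the same reasoning as above, $\ol{A}$ is uniformly introenumerable, using Theorem \ref{hiuie} applied to $\ol{A}$, which is also immune and semicomputable. Jockusch's result \cite[Corollary 3.10]{jock} then says that one of $A$, $\ol{A}$ is c.e. This contradicts immunity, because both are infinite and immune.

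This is exactly the argument of Corollary \ref{ume not umt}, which already shows that $A$ is not introreducible. Since majorreducible implies introreducible, the statement follows directly. The only point needing care is that implication, and it is the one-line majorization argument above.
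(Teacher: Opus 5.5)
Your proof is correct and uses the same witness as the paper: the set $A$ with $A$ and $\ol{A}$ both immune and semicomputable, taken from Corollary~\ref{ume not umt}. The only difference is the last one-line step. The paper combines ``uniformly majorenumerable but not uniformly majorreducible'' with Theorem~\ref{majorjock}, while you combine ``not introreducible'' (also proved in Corollary~\ref{ume not umt}) with the trivial inclusion of majorreducible sets in introreducible sets; your step is slightly more direct but rests on the same chain through Corollary~\ref{iT+ume}.
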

 \begin{proof}
 Any uniformly majorenumerable set which is not uniformly majorreducible must be non-majorreducible by Theorem \ref{majorjock}.
 \end{proof}
 \begin{prop}\label{neither}
 There exists a uniformly introenumerable set which is neither introreducible nor regressive.
 \end{prop}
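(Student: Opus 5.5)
We will take an immune semicomputable set $A$ whose complement $\ol{A}$ is also immune and semicomputable; such sets exist in every nonzero Turing degree $\leq_T 0'$ by \cite[Theorem 5.2]{semirecursive}. By Theorem \ref{hiuie}, $A$ is hyperimmune and uniformly introenumerable. So it remains to show that $A$ is neither introreducible nor regressive.

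\emph{Not introreducible.} This is exactly the argument of Corollary \ref{ume not umt}. Suppose $A$ were introreducible. Since $A$ is immune and semicomputable, it is uniformly majorenumerable by the corollary following Theorem \ref{hiuie}. Corollary \ref{iT+ume} then makes $A$ uniformly majorreducible, hence uniformly introreducible. Applying the same reasoning to $\ol{A}$ (via Theorem \ref{hiuie}) shows that $\ol{A}$ is uniformly introenumerable. Jockusch's result \cite[Corollary 3.10]{jock} then says one of $A$, $\ol{A}$ is c.e. But each is infinite and immune, so neither can be c.e. This is a contradiction.

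\emph{Not regressive.} Recall the result of Appel and McLaughlin quoted in the background: if $A$ and $\ol{A}$ are complementary regressive sets, then one of them is c.e. So it suffices to show that $\ol{A}$ is regressive whenever $A$ is, or find a more direct route. The more direct route I would try first uses Dekker's theorem that every infinite regressive set has an infinite retraceable subset \cite{dekker2}. If $A$ were regressive, it would contain an infinite retraceable subset $R$. A retraceable set is uniformly introreducible, so $R \geq_T A$ uniformly on subsets of $R$. The obstacle is turning this into a contradiction with immunity plus semicomputability.

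Here I would use the semicomputable selector $\alpha$ more directly. Let $R=\{r_0<r_1<\cdots\}$ be retraced by $\phii$. For a candidate $x$, compute the finite chain $\hat\phii(x)$. If $\hat\phii(x)$ reaches $r_0$, then every element $y$ of $\hat\phii(x)$ lies in $R$ when $x\in R$, and otherwise the chain branches off $R$ at a unique point. I would enumerate $x$ into a set $W$ whenever $\hat\phii(x)$ terminates at $r_0$ and the selector $\alpha$ compares $x$ with its $\phii$-predecessor consistently, namely $\alpha(\phii(x),x)=\phii(x)$ along the chain. The aim is to show that $W$ is either an infinite c.e.\ subset of $A$ or has an infinite c.e.\ subset inside $\ol{A}$, contradicting the immunity of $A$ or of $\ol{A}$.

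I expect this step to be the main difficulty. If it resists, the fallback is to note that the set constructed later in Theorem \ref{regressive not intro T}-style arguments is not needed. Instead I would argue directly that a retraceable set $R$ is never semicomputable-and-immune in a compatible way. Concretely, I would show that $R\cup$ (tree branches) yields a c.e.\ subset of $A$, using $\alpha(a,c)=a$ for cofinitely many $c$, as in the proof of Theorem \ref{hiuie}.
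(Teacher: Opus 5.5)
Your argument that $A$ is not introreducible is fine. It is the argument of Corollary \ref{ume not umt}; the paper reaches the same conclusion via Jockusch's Intersection Theorem. The gap is the non-regressiveness step, which you never prove.

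\textbf{Why your sketch fails.} The enumeration $W$ cannot work. If $x\in A$, then $\phii(x)\in A$ too, so $\alpha(\phii(x),x)$ may return either element and tells you nothing. If $x\notin A$ but $\phii(x)\in A$, the test $\alpha(\phii(x),x)=\phii(x)$ is forced to succeed. So nothing makes $W$ an infinite subset of $A$ or of $\ol{A}$.

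The detour through Dekker's theorem also has an error. Uniform introreducibility of a retraceable $R\subseteq A$ says that infinite subsets of $R$ compute $R$, not $A$. What you actually get from $R$ (via $\alpha$) is only that $A$ is c.e.\ in $R$.

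Finally, the reduction ``it suffices to show $\ol{A}$ is regressive whenever $A$ is'' is left unsupported.

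\textbf{The missing idea (the paper's route).} You do not need to decide whether your particular $A$ is regressive. The hypotheses are symmetric, since $\ol{A}$ is also immune and semicomputable. So your own argument shows that both $A$ and $\ol{A}$ are uniformly introenumerable and not introreducible. Neither is c.e., being infinite and immune. By Appel--McLaughlin \cite[Theorem 1]{appel}, they are therefore not both regressive. Whichever of the two is not regressive is the required set.

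\textbf{A direct alternative.} If you want to show that your fixed $A$ is not regressive, compare two nodes of the same depth rather than a node with its parent.
\begin{enumerate}
\item Suppose $\phii$ regresses $A=\{a_0,a_1,\dots\}$. Let $D=\{x:\exists m\ \phii^m(x)=a_0\}$, and let $d(x)$ be the least such $m$.
\item Since the $a_i$ are distinct, $d(a_n)=n$. So $A$ meets each level $\{x\in D: d(x)=n\}$ in exactly the one point $a_n$.
\item Let $V=\{x\in D:\exists y\in D\ (y\neq x\wedge d(y)=d(x)\wedge \alpha(x,y)=y)\}$. This set is c.e.
\item The selector property gives $V\subseteq\ol{A}$ and $A=D\setminus V$.
\item Co-immunity makes $V$ finite. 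Then $A$ is c.e., contradicting the immunity of $A$.
\end{enumerate}
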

 \begin{proof}
 Let $A$ be a set such that both $A$ and $\ol{A}$ are hyperimmune uniformly introenumerable. Then neither $A$ nor $\ol{A}$ are introreducible. For if one of them is introreducible, then it would be uniformly introreducible by Jockusch's Intersection Theorem. In that case, as in the proof of Corollary \ref{ume not umt}, one of them will be c.e. Recall that if both $A$ and $\ol{A}$ are regressive, then one of them is c.e.\ \cite[Theorem 1]{appel}. This tells that not both $A$ and $\ol{A}$ are regressive. So one of $A$ or $\ol{A}$ is neither introreducible nor regressive.
 \end{proof}
  \begin{prop}\label{unif intro e vs intro T}
 Introreducibility and uniform introenumerability are distinct notions.
 \end{prop}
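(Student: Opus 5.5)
To show the two notions are distinct, I will exhibit a set lying in one class but not the other, in each direction. Per Figure \ref{figure1}, the box this proposition is pinned to is introreducible but not uniformly introenumerable (top-left of the intro-T row). The other direction, a uniformly introenumerable set that is not introreducible, is already supplied by Proposition \ref{neither}.

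For an introreducible set that is not uniformly introenumerable, I take Lachlan's Construction \cite[Theorem 4.1]{jock}. It is a co-c.e.\ introreducible set $A$ that is not uniformly introreducible. I claim $A$ is also not uniformly introenumerable. Suppose it were. Then $A$ would be both introreducible and uniformly introenumerable, so Jockusch's Intersection Theorem \cite[Theorem 5.3]{jock} would make $A$ uniformly introreducible, a contradiction.

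Alternatively, the example can be obtained without appealing to Lachlan, by adapting the forcing proof of Theorem \ref{ienotuie forcing proof}. Take the forcing $\p$ from that proof, but replace $\Theta_0,\Theta_1$ by Turing-style codings. For instance, let the membership of $n$ be determined both positively and negatively: $n\in G$ iff $E\cap X_n\neq\emptyset$, and $n\notin G$ iff $E$ meets a designated complementary progression. Then $G\leq^{ui}_T G_0$ and $G\leq^{ui}_T G_1$, and Corollary \ref{forcingtool} makes $G=G_0\cup G_1$ introreducible. The density argument for $D$ in that proof shows $G$ is not uniformly introenumerable: one adds a finite $u\subseteq G_0\cap B_{E_p}$ beyond $n$ to $\tau_p$ so that $n\in\Gamma(E_q)$ while $n\notin E_q$.

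The only real obstacle lies in this forcing route: the negative coding must stay compatible with the extension relation. I expect it to be simpler to rely on Lachlan's set together with the Intersection Theorem, and that is the route I would write up.
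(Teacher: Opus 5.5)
Your main route is the paper's proof. A set that is introreducible but not uniformly introreducible (Lachlan's Construction) cannot be uniformly introenumerable, by Jockusch's Intersection Theorem, and Proposition \ref{neither} supplies a uniformly introenumerable set that is not introreducible; the forcing alternative is not needed, and you are right to write up the first route.
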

 \begin{proof}
 If $A$ is an introreducible set that is not uniformly introreducible, then $A$ cannot be uniformly introenumerable by Jockusch's Intersection Theorem.

 And if $A$ is the uniformly introenumerable set from Proposition \ref{neither}, then $A$ is not introreducible.
 \end{proof}
 \begin{thrm}\label{retraceable not umt}
 There exists a retraceable set $A$ such that no $B\in[A]^\omega$ is uniformly majorreducible.
 \end{thrm}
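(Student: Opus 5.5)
The plan is to use the classical Dekker--Myhill coding of a set by its initial segments, and to pick the coded set so complicated that no infinite subset of the resulting retraceable set can be hyperarithmetic. Corollary \ref{hyp maj} then finishes the argument, since every uniformly majorreducible set is hyperarithmetic. Equivalently, by Corollary \ref{jock sol}, it suffices to produce a retraceable set with no infinite $\Delta^1_1$ subset.

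First, fix a computable coding $\sigma\mapsto\ulcorner\sigma\urcorner$ of $2^{<\omega}$ onto $\omega$ such that $\sigma\prec\tau$ implies $\ulcorner\sigma\urcorner<\ulcorner\tau\urcorner$; for instance, order strings by length and then lexicographically. Fix any set $X\subseteq\omega$ that is not hyperarithmetic, say Kleene's $\mathcal{O}$, and let
$$A:=\{\ulcorner X\restriction_n\urcorner:n\in\omega\}.$$
Listing $A$ as $a_n=\ulcorner X\restriction_n\urcorner$ gives $a_0<a_1<\cdots$. Define a computable function $\phii$ by $\phii(\ulcorner\sigma\urcorner)=\ulcorner\sigma^-\urcorner$ for nonempty $\sigma$, where $\sigma^-$ is $\sigma$ with its last bit deleted, and $\phii(\ulcorner\emptyset\urcorner)=\ulcorner\emptyset\urcorner$. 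Then $\phii(a_{n+1})=a_n$ and $\phii(a_0)=a_0$, so $A$ is retraceable, even via a total retracing function.

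Next, let $B\in[A]^\omega$. Since $B$ is infinite, it contains codes of initial segments of $X$ of arbitrarily large length. To compute $X(k)$ from $B$, search $B$ for an element $\ulcorner\sigma\urcorner$ with $|\sigma|>k$ and output $\sigma(k)$. Every element of $B$ codes an initial segment of $X$, so the answer is correct, and hence $X\leq_T B$.

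Finally, suppose for a contradiction that $B\in[A]^\omega$ is uniformly majorreducible. By Corollary \ref{hyp maj}, $B$ is hyperarithmetic, and therefore so is $X\leq_T B$, contradicting the choice of $X$. There is no real obstacle in this argument. The only point needing care is to quote the fact that majorreducible sets are hyperarithmetic (Solovay's theorem, in the form of Corollary \ref{hyp maj}), and that is already available in the paper. As a byproduct, the proof shows that $A$ has no majorreducible subset at all.
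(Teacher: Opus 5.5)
Your proof is correct, but it takes a different route from the paper's.

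The paper also uses the Dekker--Myhill set $\{\pi(\rho):\rho\prec X\}$. However, it builds $X=\bigcup_n\sigma_n$ by a finite-extension diagonalization. At step $n$ it picks $\tau$ with $|\tau|>|\sigma_n|$ and $\pi(\tau)$ in the $n$th infinite hyperarithmetic set $H_n$, then extends $\sigma_n$ incompatibly with $\tau$, so that $H_n\nsubseteq A$. It never uses the fact that infinite subsets of $A$ compute $A$.

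You use exactly that self-encoding instead. Every $B\in[A]^\omega$ computes $X$, so a hyperarithmetic $B$ would make $X$ hyperarithmetic. Taking $X=\mathcal{O}$ then finishes the proof with no construction at all.

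Your argument actually gives more. Retraceable sets are uniformly introreducible, so a retraceable set has an infinite hyperarithmetic subset iff it is itself hyperarithmetic. Hence any non-hyperarithmetic retraceable set witnesses the theorem, and your example even has a total retracing function.

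The paper's diagonalization buys something different: it does not need the avoided class to be closed downward under $\leq_T$. The same finite-extension argument yields a set of the form $\{\pi(X\restriction_n):n\in\omega\}$ that contains no member of an arbitrary countable family of infinite sets.
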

 \begin{proof}
 It suffices to show that there is a retraceable set $A$ without an infinite hyperarithmetic subset by Corollary \ref{hyp maj}. Let $(H_n,n\in\omega)$ be a list of all infinite hyperarithmetic sets. Fix a computable bijection $\pi:2^{<\omega}\to\omega$ such that $\pi(\sigma)>\pi(\tau)$ whenever $|\sigma|>|\tau|$. We build a sequence of strings $\sigma_0\prec\sigma_1\prec\cdots$, and let $A$ be the set $\{\pi(\rho):\exists n\hs \rho\preceq\sigma_n\}$. In this way, $A$ is clearly retraceable because we can consider the map $\omega\to\omega$ by $\pi(\sigma)\to\pi(\sigma^-)$ when $|\sigma|>0$ and $\pi(\sigma)\to\pi(\sigma)$ when $|\sigma|=0$, where $\sigma^-$ denotes the string without the last bit of $\sigma$. Let $\sigma_0=\emptyset$. Having $\sigma_n$, we may pick a string $\tau\in 2^{<\omega}$ such that $|\tau|>|\sigma_n|$ and $\pi(\tau)\in H_n$. Then pick a string $\rho\in 2^{<\omega}$ such that $\rho\succ\sigma_n$ and $\rho$ is incomparable to $\tau$. We define $\sigma_{n+1}=\rho$. Thus, $H_n$ cannot be a subset of $A$ as $\pi(\tau)\notin A$.
 \end{proof}
 Note that the set $A$ above is necessarily not co-immune by Proposition \ref{retraceable implies umt}.
 \begin{cor}\label{retraceable not me}
 There exists a retraceable (and hence uniformly introreducible) set that is not majorenumerable.
 \end{cor}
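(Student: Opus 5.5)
Corollary \ref{retraceable not me} follows directly from Theorem \ref{retraceable not umt} and the results of Section 2, so no new construction is needed. We take the retraceable set $A$ built in the proof of Theorem \ref{retraceable not umt}. That proof shows $A$ has no infinite hyperarithmetic subset, since every infinite hyperarithmetic $H_n$ contains an element $\pi(\tau)\notin A$. The remarks in the introduction give that every retraceable set is uniformly introreducible: given $C\in[A]^\omega$ and $x$, pick some $c\in C$ with $c\ge x$ and follow the retracing function from $c$ down. Because the retracing function is decreasing along $A$, $x\in A$ if and only if $x$ appears on this finite chain. This procedure is uniform in $C$, which gives the parenthetical claim.

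To show $A$ is not majorenumerable, we argue by contradiction. Suppose $A$ were majorenumerable. Corollary \ref{refinable} would then give a uniformly majorreducible subset $B\in[A]^\omega$. This already contradicts Theorem \ref{retraceable not umt}, which says no infinite subset of $A$ is uniformly majorreducible.

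Alternatively, Corollary \ref{majorpi11} shows that a majorenumerable $A$ would be $\Pi^1_1$. Luckham's result then gives an infinite hyperarithmetic subset of $A$, contradicting the construction of $A$.

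There is essentially no obstacle. The only point to check is that the set from Theorem \ref{retraceable not umt} is genuinely infinite and retraceable. This holds because the strings $\sigma_n$ strictly extend one another, and $\pi$ respects length, so the map $\pi(\sigma)\mapsto\pi(\sigma^-)$ decreases along $A$.
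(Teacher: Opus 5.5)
Your proposal is correct and is exactly the paper's argument: apply Corollary \ref{refinable} to the retraceable set of Theorem \ref{retraceable not umt}, which has no uniformly majorreducible infinite subset. Your alternative via Corollary \ref{majorpi11} and Luckham's theorem simply unpacks the proof of Corollary \ref{refinable}. Your justification of the parenthetical claim, following the retracing function down from some $c\in C$ with $c\ge x$, is sound.
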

 \begin{proof}
 By Corollary \ref{refinable}, every majorenumerable set has a uniformly majorreducible subset. So the above retraceable set cannot be majorenumerable.
 \end{proof}
Hence, retraceability, and any other weaker notions like introreducibility, is not refinable to majorenumerability.

 We have seen so far that uniformly major-T/e, major-T/e are all distinct notions and have seen their relations with other notions. Now we focus more towards the notions uniformly intro-T/e, intro-T/e, retraceable/regressive, and c.e./computable sets. To distinguish c.e.\ with introreducibility, note that every Turing degree contains an introreducible set and that every noncomputable c.e.\ set cannot be introreducible.
  \begin{rem}
 For the relation between (uniform) introreducibility and regressive/retraceable, Remark \ref{remark} shows that uniformly introreducible sets are not necessarily refinable to regressive sets.
 \end{rem}
 From Theorem \ref{ienotuie forcing proof}, we know that a nontrivial introenumerable sets exist. We present another method of building a nontrival introenumerable set---the below is an `introenumerable' version of Lachlan's Construction that there exists an introreducible set which is not uniformly introreducible.
 \begin{thrm}\label{intro e neither}
 There exists an introenumerable set that is not uniformly introenumerable and not introreducible.
 \end{thrm}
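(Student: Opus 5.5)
The construction will be an ``introenumerable'' analogue of Lachlan's Construction. I will build $G=G_0\cup G_1$ with $G\leq^{ui}_e G_0$ and $G\leq^{ui}_e G_1$, so that $G$ is introenumerable by Corollary~\ref{forcingtool}. Since the paper remarks before Corollary~\ref{hyp cor} that this $G$ will admit a hyperarithmetically found uniformity, I aim for $G_0,G_1$ that are $\Delta^1_1(G)$, in fact computable from $G$. The simplest way is to fix the partition in advance: $G_0\subseteq$ evens and $G_1\subseteq$ odds.

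The coding uses the $X_i$, $Y_i$, $\Theta_0$, $\Theta_1$ of the proof of Theorem~\ref{ienotuie forcing proof}, restricted to evens and odds respectively. The invariant is the one in the forcing $\p$ there, enforced by a priority construction rather than by genericity. Once $n\in G$ has been committed and its ``$k$-threshold'' has passed, every later even element of $G$ lies in $X_n$ and every later odd element lies in $Y_n$. Conversely, for $n\notin G$, no even element is ever in $X_n$ and no odd element is ever in $Y_n$. Thus any infinite subset of $G_0$ enumerates $G$ via $\Theta_0$, and likewise for $G_1$ via $\Theta_1$.

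I will build the set by a finite-extension construction of conditions $p_s=(\sigma_s,\tau_s,k_s)$, interleaving two kinds of requirements.

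\textbf{Non-uniformity} $R_\Gamma$: $\Gamma$ does not witness $G\leq^{ui}_e G$. I copy the density argument from Theorem~\ref{ienotuie forcing proof}. Pick $n$ in $G_0\setminus G_1$ beyond the current condition. Search, among finite sets $u$ of odd numbers in $B_{E_p}$ that are compatible with extending $\tau$, for one with $n\in\Gamma(u)$, while committing $n\notin E$. Then add $u$ to $G_1$. If no such $u$ exists, then the infinite set $(G_1\cap B_{E_p})\cap(n,\infty)$ fails to enumerate $n$, so $R_\Gamma$ is met anyway.

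\textbf{Non-introreducibility} $Q_\Phi$: some infinite $Y\subseteq G$ has $\Phi^Y\neq G$. This is the splitting claim from the same proof. Either we find an extension in which $\Phi$ errs on a fresh $n\in A_{E_p}$, which we then put in or keep out of $G_0$, or $\Phi^Y(n)$ diverges along a committed end-extension of $Y$.

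Nothing in this plan needs full genericity, only the existence of these extensions. I therefore expect to carry it out with an oracle strong enough to decide the relevant $\Sigma^1_1$/arithmetical questions, and to note that the construction produces specific sets.

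The main obstacle is making the $Q_\Phi$ requirements cohere. There is one requirement for each $\Phi$, but the set $Y$ must be a single infinite subset of $G$ built along the way, or else a separate $Y_\Phi$ must be built for each $\Phi$. To avoid the coherence problem, I would use a separate $Y_\Phi\subseteq G_0$ for each $\Phi$, defined as a limit of the committed end-extensions. I must then check that the later requirements, which only add elements to $G$ above the current length, never invalidate an earlier $Y_\Phi$'s witness. Divergence commitments are preserved only if they quantify over all future extensions, so I would phrase alternative (2) exactly as in the earlier claim.

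Finally, the non-uniformity step must not destroy the threshold invariant. Every added $u$ must lie in $B_{E_p}$ (respectively $A_{E_p}$) of the current condition, which is always an infinite set, so such $u$ are available.
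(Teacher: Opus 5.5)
\textbf{The fixed evens/odds partition makes $G$ uniformly introenumerable.} Under your own invariant, the requirements $R_\Gamma$ cannot all be met. Let $\Gamma(C)=\Theta_0(C_{\mathrm{ev}})\cup\Theta_1(C_{\mathrm{odd}})$, where $C_{\mathrm{ev}},C_{\mathrm{odd}}$ are the even and odd elements of $C$; this is an enumeration operator.
\begin{itemize}
\item Your invariant says an even element of $G$ lies in $X_j$, or an odd one in $Y_j$, only if $j\in G$. So $\Gamma(C)\subseteq G$ for every $C\subseteq G$.
\item Every infinite $C\subseteq G$ has infinitely many even or infinitely many odd elements, so $\Gamma(C)\supseteq G$.
\end{itemize}
More generally, suppose $A=A_0\cup A_1$ with $A_0,A_1$ infinite, $A\leq^{ui}_e A_i$, and some computable $R$ satisfies $A\cap R\subseteq A_0$ and $A\setminus R\subseteq A_1$. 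Then $A$ is uniformly introenumerable. A $\Delta^1_1$ splitting, as mentioned before Corollary~\ref{hyp cor}, is harmless; a computable one is not.

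You can see the failure locally in your fallback for $R_\Gamma$. Once $n\in G$ and the threshold passes $n$, every later odd element of $G$ lies in $Y_n$. But $B_{E_p}\cap Y_n=\emptyset$ because $n\notin E_p$. So $(G_1\cap B_{E_p})\cap(n,\infty)$ is finite, not infinite.

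The argument of Theorem~\ref{ienotuie forcing proof} works precisely because the pieces overlap and cannot be separated. Elements of $G_0$ are unconstrained by the $Y_m$'s, so $G_0\cap B_{E_p}$ is infinite. A finite $u\subseteq G_0\cap B_{E_p}\cap(n,\infty)$ with $n\in\Gamma(u)$ can then be placed in the $\tau$-coordinate of an extension of $p$ that excludes $n$.

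A $0'$-construction can imitate this only if you keep that coding and supply the infinite set by hand. Ask whether some finite $u\subseteq B_{E_p}\cap(n,\infty)$ has $n\in\Gamma(u)$.
\begin{itemize}
\item If so, put $u$ into $\tau$ and keep $n$ out.
\item If not, put a fresh $n\in A_{E_p}$ into $\sigma$. Then, at infinitely many later stages, add to $\sigma$ a fresh element of $A_{E_{p'}}\cap B_{E_p}\cap(n,\infty)$, where $p'$ is the current condition. This makes $G\cap B_{E_p}\cap(n,\infty)$ an infinite subset of $G$ whose $\Gamma$-image omits $n\in G$.
\end{itemize}

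\textbf{Separate $Y_\Phi$'s only refute uniform introreducibility.} A separate $Y_\Phi$ for each $\Phi$ gives only $\forall\Phi\,\exists Y\,(\Phi^Y\neq G)$, i.e.\ that $G$ is not uniformly introreducible. Non-introreducibility needs a single infinite $Y\subseteq G$ with $\Phi^Y\neq G$ for every $\Phi$. Jockusch's Intersection Theorem cannot upgrade the former to the latter, because $G$ is not uniformly introenumerable.

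The coherence problem you anticipate does not actually arise. Both outcomes of the splitting claim persist under later extensions: a convergent $\Phi^{S^*}(n)$ contradicting the value of $G(n)$ already fixed by $q$, or divergence for all further extensions of the condition and of $S$. So one $S$ can be threaded through all stages, with an element added at each stage to make $Y$ infinite.

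\textbf{Comparison with the paper.} The statement is already a special case of Theorem~\ref{ienotuie forcing proof}. The paper proves it here by a different route altogether. From a noncomputable regressive $B$ with c.e.\ complement, it builds $A\leq_e B$ coding $B$ into the columns $\langle n,\langle b^n,y\rangle\rangle$. It defeats $\Gamma_e$ on the $e$th column by dumping in a finite $F$ whose $\Gamma_e$-image meets $\overline{B}$. It diagonalizes against $\Phi_e^S$ for one set $S$ built alongside $A$.
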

 \begin{proof}
 Let $B$ be a noncomputable regressive set with a c.e.\ complement, which exists by \cite[Theorem T3]{dekker}. Say $B$ is regressive via a partial computable function $f$, and $\ol{B}$ is the c.e.\ set $W$. We will build $A$ which is introenumerable but not introreducible and not uniformly introenumerable. Let $b^n$ denote the $n$th element of the regressive sequence witnessing the regressiveness of $B$, i.e.\ for all $n$, $f(b^{n+1})=b^n$ and $B=\{b^0,b^1,\dots\}$.
We build $A\leq_e B$ so that we satisfy
\begin{enumerate}
\item $\forall n\hs\<n,\<x,y\>\>\in A\rimp x=b^n$;

\item $\forall n\hs\forall^\infty y\hs[\<n,\<b^n,y\>\>\in A\liff y\in B]$;

\item $\forall n\hs \Gamma_n(A^n)\neq B$ where $A^n$ denotes the set which has its $n$th column equal to that of $A$ and has all the other columns empty. Here, $(\Gamma_n,n\in\omega)$ is a list of all enumeration operators;

\item There is some infinite $S\subseteq A$ such that $A\nleq_T S$.
\end{enumerate}
If we succeed in building such $A$, note that we also have $B\leq_e A$ and $B\leq_T A$. Moreover, such $A$ is introenumerable because given any infinite $C\subseteq A$, $C$ either hits infinitely many columns of $A$ or hits infinitely many elements of a column of $A$. In either cases, $C$ can enumerate $B$, which enumerates $A$. The third condition assures that $A$ is not uniformly introenumerable because for if it were, then any infinite subset of $A$ would uniformly enumerate $B$, hence we have $A\geq^{ui}_{e}B$, which contradicts the third condition. To see that $A$ is not introreducible, this is immediate from the fourth condition.

Below, we write $\Gamma_n(\omega^{[e]}=F)$ to denote the enumeration operator $\Gamma_n$ ran with the oracle where every column is empty other than $e$th column, which is $F$.

If we are given an enumeration of $B$ by $B=\bigcup_s B_s$, note that we can also enumerate $b^0,b^1,\dots$ using the regressiveness of $B$. Hence, to build $A\leq_e B$, it suffices to build $A$ using the enumeration of $B$ specifically given by $B_s=\{b^i:i\leq s\}$.
\vspace{0.3cm}

\noindent\emph{Construction of $A$.} Say we have the enumeration of $B$ by $(B_s=\{b^i:i\leq s\},s\in\omega)$.

We have requirements
$$N_e: \Phi_e^S\neq A$$
which will give us the fourth condition and
$$P_e: \Gamma_e(A^e)\neq B$$
which will give us the third condition. For the $N$ requirements, we build $\sigma_s$ at stage $s$, so that $S:=\bigcup_s\sigma_s$ witnesses $A\nleq_T S$. The requirements are assigned priorities as $P_0>N_0>P_1>\cdots$

\emph{Stage $0$}: Start with $A_0=\emptyset$ and $\sigma_0=\emptyset$.

\emph{Stage $s+1$}: Enumerate into $A_{s+1}$ all unrestricted (what elements are restricted is explained later) elements of the form $\<m,\<b^m,y\>\>$ where $m\leq s$ and $y\in B_{s+1}$. For each $N_e$, $e\leq s$, such that its witness $n_{e,s}$ is undefined, define $n_{e,s}$ to be the smallest element of the form $\<e,\<b^e,y\>\>$ that has not been enumerated into $A_{s+1}$; and restrict $n_{e,s}$ from entering $A$. Then let the highest priority requirement with $e\leq s$ which requires attention act.

\emph{How they act.} We say $N_e$ requires attention if it it not yet satisfied and $\Phi_e^{\sigma_s}(n_{e,s})\downarrow$ where $n_{e,s}$ is the witness for $N_e$ at stage $s+1$. And $N_e$ acts by:
\begin{enumerate}
\item If $\Phi_e^{\sigma_s}(n_{e,s})=0$, enumerate $n_{e,s}$ into $A$.

\item If $\Phi_e^{\sigma_s}(n_{e,s})=1$, keep restricting $n_{e,s}$ from joining $A$.
\end{enumerate}

We say $P_e$ requires attention if it is not yet satisfied and there is some $n\in W_s$ and some finite $F\subseteq \{b^e\}\times\omega$ such that $n\in \Gamma_e(\omega^{[e]}=F)$. And $P_e$ acts by enumerating elements of $F$ into the $e$th column of $A$---note that $P_e$ can override the restrictions set by $N_e$ and that $N_e$ is the only requirement which restricts an element of the $e$th column.

Once $P_e$ acts, it does not act ever again, whereas $N_e$ may get injured (at most once by $P_e$) and act once more. If $P_e$ acted, then clear $n_{e,s}$ and set $n_{i,s+1}=n_{i,s}$ for $i\neq e$ and $i\leq s$. If either $N_e$ acted or no requirement acted, set $n_{i,s+1}=n_{i,s}$ for all $i\leq s$. In either cases, extend $\sigma_s$ to $\sigma_{s+1}$ by enumerating into $\sigma_s$ one element of $A_{s}^s$.
\vspace{0.3cm}

\noindent\emph{Verification.} It is clear that $B\leq_e A$ and $A\leq_e B$. The first condition is satisfied by the construction. As $P$ requirements act at most once and $N$ requirements act at most twice, the second condition is satisfied as well.

\emph{Condition (3) is satisfied.} If $P_e$ ever acts, say by enumerating $F$ into the $e$th column of $A$, then for some $n$, we have $n\in\Gamma_e(\omega^{[e]}=F)\subseteq\Gamma_e(A^e)$ and $n\in \ol{B}$, so that (3) is satisfied. If $P_e$ never acts, we must have $\Gamma_e(A^e)\neq B$ as well. For if $\Gamma_e(A^e)=B$, then $B$ is c.e.\ as of a contradiction because we obtain the following $\Sigma_1^0$ description of $B$:
$$\forall m [m\in B\liff \exists F\subseteq\{b^e\}\times\omega\hs (m\in\Gamma_e(\omega^{[e]}=F))].$$

\emph{Condition (4) is satisfied.} Take $S=\bigcup_s\sigma_s$. We show that $A\nleq_T S$. Assume for contradiction that $A=\Phi_e^S$. Let $s+1$ be a large enough stage such that $N_e$ does not get injured from then on, and hence $n_{e,s}=n_{e,t}$ for $t\geq s$. Since $\Phi^S_e$ is total, $\Phi_e^S(n_{e,s})\downarrow$ at some later stage $t$, and $N_e$ will act at some later stage, succeeding in the diagonalization, which is a contradiction.
 \end{proof}
 Note that the introenumerable sets built in the above proof and in the proof of Theorem \ref{ienotuie forcing proof} are not co-c.e.\ by Corollary \ref{introeimpliesintrot}. An interesting point to note is that the set $A$ constructed above is still a disjoint union of \emph{two} sets $\geq^{ui}_e A$, as in Theorem \ref{ienotuie forcing proof}. Indeed, for each $n\in\omega$, let $x_n$ be a number such that: for $y>x_n$, we have $\<n,\<b^n,y\>\>\in A$ if and only if $y\in B$. In other words, $x_n$'s can be thought of as the numbers noting the injury. Then we can divide $A$ into the two sets as 
 $$A_0=\{\<m,\<b^m,y\>\>:m\in\omega\hs\&\hs y\leq x_m\}$$
 and
 $$A_1=\{\<m,\<b^m,y\>\>:m\in\omega\hs\&\hs y>x_m\}.$$
 Then $B\leq^{ui}_eA_0$ because any infinite subset of $A_0$ must hit infinitely many columns of $A_0$. And $B\leq^{ui}_e A_1$ because although an infinite subset of $A_1$ may hit only finitely many columns of $A_1$, the subset will then contain infinitely many elements from a single column of $A_1$. And any element of a column of $A_1$ is a \emph{true} element by how we chose the $x_n$'s. In particular, any element $\<n,\<b^n,y\>\>\in A_1$ tells us that $b^n\in B$ and $y\in B$, and this shows $B\leq^{ui}_e A_1$. The fact that $A\leq_e B$ implies that $A_0$ and $A_1$ are $\geq^{ui}_e A$.

 Another interesting point to note is that we can choose $x_n$'s so that they are $B'$-computable because the above construction is $B$-computable. Hence, $A_0$ and $A_1$ are $B'$-computable. In particular, $A_0$ is $\Delta^1_1(A)$, and by Corollary \ref{hyp cor}, $A$ has a uniformly introreducible subset.
 
\subsection{Building a nontrivial regressive set} Now, to complete the justification for Figure \ref{figure1} we use a finite injury argument to construct a nontrivial regressive set. The examples of regressive sets we have seen so far are either retraceable or c.e. The following constructs a new regressive set and adds to Theorem \ref{separation}.
 \begin{thrm}\label{regressivenotce}
 There exists a regressive set which is not c.e.\ and not retraceable.
 \end{thrm}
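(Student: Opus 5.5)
I will build $A$ by a finite injury construction, in the style of Proposition \ref{co-immune req}. I will simultaneously build a partial computable function $f$ and a $0'$-computable sequence $a_0,a_1,\dots$ of distinct numbers with $f(a_{n+1})=a_n$ and $f(a_0)=a_0$, and set $A=\{a_0,a_1,\dots\}$. As in that proof, $\dom(f)$ will be a c.e.\ tree whose unique infinite path is $A$. Regressiveness of $A$ is then built in. I will arrange $f$ to be special in the sense of Lemma \ref{specialfunction}, so that every element on the tree regresses to $a_0$. There are two families of requirements:
\begin{align*}
R_e&: A\neq W_e,\\
P_e&: \text{if }\phii_e\text{ is partial computable, it does not retrace }A\text{ along an increasing enumeration of }A.
\end{align*}
They are given priorities $R_0>P_0>R_1>P_1>\cdots$. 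Along the way I keep a current path $\alpha_s=(a_0,\dots,a_{m_s})$, which is the stage $s$ guess at an initial segment of the regressive sequence.

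To meet $R_e$ I use the standard "wait and abandon" strategy. $R_e$ picks a fresh large witness $x$, hangs $x$ off the current path by setting $f(x)=a_{m_s}$, and tentatively extends the path through $x$. If $x$ later appears in $W_e$, then $R_e$ abandons the branch through $x$: it picks a fresh $y$, sets $f(y)=a_{m}$ for the last path element $a_m$ below $x$ that it is allowed to keep, and reroutes the path through $y$. The effect is that $x$ sits on a dead finite branch and is out of $A$, while $x\in W_e$. If $x$ never appears in $W_e$, then $x\in A\setminus W_e$. Either way $A\neq W_e$. Lower priority requirements whose elements lay on the abandoned branch are injured and restart with fresh numbers. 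Each $R_e$ acts at most once after higher requirements settle, so the path stabilizes on longer and longer initial segments.

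To meet $P_e$ I exploit the one freedom a regressive set has and a retraceable set lacks: the regressing order need not be increasing. $P_e$ will force every increasing enumeration of $A$ to disagree with $\phii_e$. It chooses two fresh numbers $u<v$ and extends the path as $\dots,a_m,v,u$, i.e.\ $f(v)=a_m$ and $f(u)=v$. In any retraceable representation of $A$ the predecessor of $v$ is then the largest element of $A$ below $v$; this is $u$, or some element below $u$ if we later add elements between them. $P_e$ then waits for $\phii_e(v)\downarrow$. If $\phii_e(v)$ converges to the correct increasing predecessor, $P_e$ abandons this branch and rebuilds with the roles changed. One option is to insert a fresh element between $u$ and $v$; another is to reroute through a new pair so that the true increasing predecessor of the new $v$ is different from the value $\phii_e$ has committed to. Since $\phii_e$ has already converged on an element we can remove from $A$, it is cleaner to keep $v$ and make $\phii_e(v)$ wrong by ensuring that the element $\phii_e(v)$ is not in $A$, or is not the largest element of $A$ below $v$.

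Every partial computable function is some $\phii_e$, so $A$ is not retraceable. As in Proposition \ref{0'comp}, the verification also checks that the path $\alpha_s$ converges to an infinite sequence, that $f$ regresses it, and that $A$ is exactly the union of the $\hat f(a_n)$ along it.

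The main obstacle is the $P_e$ strategy. A retracing function is only required to act on $A$, and $A$ is not known computably, so diagonalizing must genuinely change membership. Because $f$ is fixed and partial computable, we can never redefine $f$ on a number. Every change has to be made by abandoning finite branches and starting fresh branches off an earlier path node. I must check that these reroutings do not destroy $f(a_{n+1})=a_n$ for the final path and that each requirement acts finitely often. I expect the cleanest formulation to be this: $P_e$ waits for $\phii_e(v)\downarrow=w$, then abandons the branch through whichever of $v,w$ is needed so that $w\in A$, $v\in A$ and $w$ being the immediate predecessor of $v$ in $A$ cannot all hold. After that, no element is ever inserted between surviving elements of higher priority.
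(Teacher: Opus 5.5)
Your proposal is correct and takes a different route from the paper. The main difference is the mechanism for non-retraceability.

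The paper gives $N_e$ a single even witness $n$ at the end of the path and deliberately leaves $n-1$ unused. Only if $\phii_e(n)$ converges to the current largest element of $A$ below $n$ does $N_e$ truncate the sequence after $n$ and append $n-1$ with $f(n-1)=n$. So the paper diagonalizes by \emph{inserting} a reserved number into the gap below $n$.

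You instead install the descent in advance: $f(v)=a_m$ and $f(u)=v$, with $u<v$ fresh, so $u$ is the increasing predecessor of $v$. You then diagonalize by \emph{removing} $u$ exactly when $\phii_e(v)=u$: truncate at $u$ and continue from $v$ with fresh numbers. Otherwise you do nothing. This works because $u$ comes after $v$ in the regressing order, so it can be discarded without touching $v$.

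Your non-c.e.\ requirements also differ. The paper waits for any unrestricted even number to appear in $W_e$. That is why it must keep infinitely many even numbers in $A$ and states $P_e$ with that hypothesis. Your dedicated witnesses handle every $W_e$ directly.

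What each route buys:
\begin{itemize}
\item Yours makes every action a uniform truncate-and-reroute with fresh numbers, with no reserved gaps and no parity bookkeeping.
\item The paper's keeps the regressing order increasing except for steps from some $n$ to $n-1$. It reuses this immediately in Corollary \ref{regressive intro T} to show the same set is uniformly introreducible. Your set has that shape only if you take $v=u+1$.
\end{itemize}

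Two points in your write-up need tightening.
\begin{itemize}
\item Your ``cleanest formulation'' asks only that $w\in A$, $v\in A$, and ``$w$ is the immediate predecessor of $v$'' not all hold. Abandoning $v$ would achieve that without defeating $\phii_e$. The success condition must be that $v$ stays in $A$ permanently and $w$ is not its increasing predecessor. The only action ever needed is abandoning the branch through $u$ when $w=u$; this is the ``keep $v$'' version you also describe.
\item The option of inserting a fresh element between $u$ and $v$ does not work as stated, since fresh numbers exceed $v$. It needs a reserved gap, which is exactly the paper's $n-1$ device.
\end{itemize}
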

 \begin{proof}
 We build such a set $A$ (which will be $\leq_T 0'$) using a finite injury argument. We will satisfy the requirements
 \begin{align*}
P_e&:\text{if }W_e\text{ has infinitely many even numbers, }W_e\neq A\\
N_e&:A\text{ is not retraceable via }\phii_e
\end{align*}
where the requirements' priorities are assigned as $P_1>N_1>P_2>\cdots$; we are ignoring $P_0,N_0$ without loss of generality.

We will build a regressive set $A$ such that $A$ contains infinitely many even numbers, so that if all $P$ and $N$ requirements are satisfied, then $A$ is as desired. In the below construction, we build a partial computable function $f=\bigcup_sf_s$ and numbers $a_{e,s}$ where in the end, $a_e=\lim_s a_{e,s}$ and $A=\{a_0,a_1,\dots\}$. Some $N_e$ requirements will have a witness $n_{e,s}$ at stage $s$. Each witness, when defined, cannot be removed from $A$ by a lower priority requirement. Moreover, the witnesses will be chosen in a way so that $n_{e,s}-1$ is always not in $A_s$ as long as $N_e$ has not acted---to have the flexibility of throwing $n_{e,s}-1$ into $A_{s+1}$ later if we want.
\vspace{0.3cm}

\noindent\emph{Construction.} Stage $0$: Start with $A_0=\{a_{0,0}=0,a_{1,0}=2\}$ and $f_0=\{(0,0),(2,0)\}$. Let $N_1$ hold the witness $n_{1,0}=2$; let $N_1$ restrict $a_{0,0},a_{1,0}$ from being removed.

Stage $s+1$: Having $A_s=\{a_{0,s},\dots,a_{k,s}\},k\leq s$, and $f_s$, let the highest priority requirement $e\leq s$ which requires attention act; if there is no such requirement, then go to the last paragraph of this construction.

We say $P_e$ requires attention at stage $s+1$ if there is some even number $m\neq 0$ unrestricted by higher priority requirements such that $\phii_e(m)\downarrow$. It acts by restricting $m$ from joining $A$ if $m\notin A_s$. If $a_{i,s}=m$ for some $i$, then $P_e$ acts by clearing $a_{j,s}$ for $j\geq i$, i.e.\ remove them from $A_s$ and undefine them. Then let $P_e$ put a restriction on $m$ from joining $A$.

We say $N_e$ requires attention at stage $s+1$ if $n_{e,s}\in A_s$ is defined and $\phii_e(n_{e,s})\downarrow$. Say $n_{e,s}=a_{l,s}\in A$ for some $l$. Let $a_{m,s}\in A$ be the largest number in $A$ smaller than $a_{l,s}$; by the construction, $a_{m,s}$ exists and $a_{m,s}\leq n_{e,s}-2$. $N_e$ acts as follows:
\begin{itemize}
\item If $\phii_e(a_{l,s})=a_{m,s}$, then note that $a_{l,s}-1$ is unrestricted (as witnesses are always an even number). Undefine $a_{j,s}$ for $j\geq l+1$ and remove them from $A_s$. Redefine $a_{l+1,s}=a_{l,s}-1$ and put $a_{l+1,s}\in A_{s+1}$. Define $f_{s+1}(a_{l+1,s})=a_{l,s}$. Let $N_e$ restrict $a_{0,s},\dots,a_{l+1,s}$ from being removed. Then clear all the witnesses $n_{j,s},j\geq e+1$.

\item If $\phii_{e}(a_{l,s})\neq a_{m,s}$, let $N_e$ keep restricting $a_{l,s}=n_{e,s}$ from being removed. Further, let $N_e$ restrict any $a_{i,s}$ for $i\leq \max\{m,l\}$ from being removed.
\end{itemize}
To finish defining $A_{s+1}$ and $f_{s+1}$, let $j$ be the smallest number such that $a_{j,s}$ is undefined now. Define $a_{j,s}$ to be the smallest even number larger than all numbers mentioned so far, and put it into $A_{s+1}$. Redenote each $a_{j,s}$ in $A_{s+1}$ by $a_{j,s+1}$. Define $f_{s+1}(a_{j,s+1})=a_{j-1,s+1}$. For the numbers of $\dom(f_s)$ that is not yet defined on $f_{s+1}$, define them on $f_{s+1}$ so that $f_{s+1}$ extends $f_s$. Let $i$ be the smallest number such that $N_i$ is unsatisfied yet and does not have its witness. Let $n_{i,s+1}=a_{j,s+1}$, and let $N_i$ restrict $a_{0,s+1},\dots,a_{j,s+1}$ from being removed.
\vspace{0.3cm}

\noindent\emph{Verification.} Clearly, this is a finite injury construction---once a requirement acts, all lower priority requirements are injured, meaning that their restrictions are cleared and their witnesses are cleared. We show that $A$ has infinitely many even numbers and each $a_e:=\lim_s a_{e,s}$ is defined. For $e>0$, let $s+1$ be a large enough stage such that $N_e$ does not get injured from then on. Let $n_{e,s+1}=a_{l,s+1}$ be the witness that $N_e$ holds, which is an even number. Then no matter $N_e$ acts or not from then on, $n_{e,t}=a_{l,t}$ for all $t\geq s$, which shows $a_l=n_e$ is defined and is an even number. And hence, each $a_i,i\leq l$ is also defined. Since $a_i$'s are distinct, we see that $A$ has infinitely many even numbers. We now show that $A$ is not c.e., not retraceable, and is regressive via $f=\bigcup_s f_s$.

\emph{$A$ is not c.e.} For if it were, then $A=W_e$ for some $W_e$ which has infinitely many even numbers. Let $s+1$ be some large enough stage such that all requirements with higher priority than $P_e$ does not act from then on. If $P_e$ ever acts since stage $s+1$, it is clear that we get a contradiction. If $P_e$ does not ever act since stage $s+1$, let $n_{e,s+1}$ be the witness that $N_e$ holds at stage $s+1$. Because $P_e$ does not ever act from then on, we have $n_e=n_{e,s+1}$ and $\phii_e(n_e)\uparrow$, a contradiction.

\emph{$A$ is not retraceable.} Say $A$ were retraceable via $\phii_e$. Let $s+1$ be some large enough stage such that no requirement with higher priority than $N_e$ acts from then on. Let $n_{e,s+1}$ be the witness that $N_e$ holds at stage $s+1$. Then we must have $\phii_e(n_{e,s+1})\downarrow$ at some later stage, as we will have $n_e=n_{e,s+1}\in A$. But then $N_e$ will act, diagonalizing against $\phii_e$ being the retracing function for $A$.

\emph{$A$ is regressive via $f$.} Clearly, $f$ is a partial computable function. So it suffices to show that for all $e+1$, $f(a_{e+1})=a_e$. Note that by the construction, $f(a_{e+1})=a_e$ implies that $f(a_{d+1})=a_d$ for all $d\leq e$. So it suffices to show that $f(a_{e+1})=a_e$ is true for all $a_{e+1}$'s which are $n_{k+1}$ for some $k$. For $k>0$, let $s+1$ be some large enough stage such that no requirement higher priority than $N_k$ acts from then on, and so that $n_{k}=n_{k,s+1}$. Say $n_{k,s+1}=a_{l,s+1}\in A_{s+1}$. No matter if $N_k$ acts or not from then on, we have $f(a_{l,s+1})=a_{l-1,s+1}$, and $a_l=a_{l,s+1}$, $a_{l-1}=a_{l-1,s+1}$ because $N_k$ does not get injured from then on.
\end{proof}
Note that in the proof of Theorem \ref{regressivenotce}, injury could happen from both $P_e$ and $N_e$ requirements. Another thing to note is that the set $A$ constructed in Theorem \ref{regressivenotce} is not retraceable, although it is quite close to being retraceable in the sense that for those numbers $n\in A$ such that $f(n)>n$, it must be that $f(n)=n+1$. In fact, using this, we can show that $A$ is introreducible.
\begin{cor}\label{regressive intro T}
There exists a regressive (uniformly) introreducible set which is not c.e.\ and not retraceable.
\end{cor}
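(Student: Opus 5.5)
The plan is to take the set $A=\{a_0,a_1,\dots\}$ built in Theorem \ref{regressivenotce} together with its regressing function $f$, and show directly that $A$ is uniformly introreducible. The other properties (regressive, not c.e., not retraceable) are already established. The key structural fact is the one noted after the proof. Along the regressing sequence, $a_{i+1}>a_i$ except at the places where some $N_e$ acted by inserting $a_{l+1}=a_l-1$ with $f(a_{l+1})=a_l$. At such a place the next element $a_{l+2}$ is a fresh even number larger than everything mentioned so far, so $a_{l+2}>a_l>a_{l+1}$. Thus the sequence is increasing except for isolated downward steps of size exactly one.

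First I would extract from the construction the local order facts, each checked by a case analysis on how the $a_{j,s}$ are defined:
\begin{itemize}
\item if $x\in A$ and $f(x)<x$, then $f(x)$ is the predecessor of $x$ in the increasing enumeration of $A$, or $x$ sits directly above an inserted odd element;
\item if $f(x)>x$, then $f(x)=x+1$ and $f(f(x))<x$.
\end{itemize}
Using these, I would define a computable "corrected retracing" procedure. Given $x\in A$, follow $f$ down from $x$ to $a_0$. The orbit $\hat f(x)$ is exactly $\{a_0,\dots,a_k\}$ where $x=a_k$, and it is a finite set that is computable uniformly from $x$, because $f$ halts on elements of $A$. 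Sorting this orbit gives $A\cap[0,x]$ as a set. The reason is that every element of $A$ below $x$ appears earlier in the regressive sequence. This holds because elements added later are always either fresh large even numbers or $a_l-1$ for some $a_l$ that is itself later than $x$. In the latter case $a_l-1>x$ unless $a_l=x+1$, and that case has to be ruled out using the parity of witnesses: witnesses are even and $x+1$ would be odd.

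Given the claim, uniform introreducibility is immediate. For infinite $C\subseteq A$ and input $n$, search $C$ for some $c>n$. Then $n\in A$ if and only if $n\in\hat f(c)$, which is uniform in $C$ and computable. This argument also settles "introreducible" in the parenthetical.

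The main obstacle is the claim that $\hat f(x)\supseteq A\cap[0,x]$ for $x\in A$, that is, that no element of $A$ smaller than $x$ enters the final regressive sequence after $x$. Injuries complicate this: $P_e$ and $N_e$ may clear tails and re-add elements. I would argue by looking at the stage at which $x=a_k$ is finally settled. Every later addition is either a fresh number larger than all mentioned numbers, hence larger than $x$, or of the form $n_{e,s}-1$ with $n_{e,s}$ an even witness chosen after $x$, hence $n_{e,s}-1>x$ again, since $n_{e,s}\geq x+2$ when both are even or $x$ is odd and below it. If a parity edge case breaks this, I would fall back on the weaker statement that $A\cap[0,x]\subseteq\hat f(x')$ for the next element $x'$ of $C$ beyond $x+1$, which still gives a uniform reduction.
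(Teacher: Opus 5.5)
Your central claim, that $A\cap[0,x]\subseteq\hat{f}(x)$ for every $x\in A$, is false, and so is the procedure ``search $C$ for some $c>n$'' that rests on it. Suppose some $N_e$ whose witness $a_l=n_e$ is never injured acts in its first case. It then inserts $a_{l+1}=a_l-1$ with $f(a_{l+1})=a_l$, and it does so \emph{after} $a_l$ has settled. So $a_l-1\in A$ but $a_l-1\notin\hat{f}(a_l)=\{a_0,\dots,a_l\}$. Such insertions must occur in the final $A$: otherwise every $a_{k+1}$ would be a fresh number, so $a_0<a_1<\cdots$ and $f$ itself would retrace $A$, contrary to Theorem \ref{regressivenotce}. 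Now take $n=a_l-1$ and a $C$ containing $a_l$. Your procedure may pick $c=a_l=n+1$ and wrongly answer $n\notin A$. The error in your justification is that you only considered insertions $a_l-1$ for witnesses $a_l$ strictly later than $x$, and ruled out $a_l=x+1$ by parity. The real edge case is $a_l=x$, where $x$ itself is the witness.

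Your fallback is the correct repair, and it is exactly what the paper does. The paper takes $n_0$ to be the second smallest element of $C$ above $n$, which forces $n_0\geq n+2$. To turn the fallback into a proof, state and check the structural fact it depends on. Removals in the construction always delete tails, so the elements below a settled $a_{k+1}$ are already settled when it is defined. Hence each final $a_{k+1}$ is of one of two kinds:
\begin{itemize}
\item it is fresh, that is, larger than $a_0,\dots,a_k$; or
\item it equals $a_k-1$, where $a_k$ is an even fresh witness.
\end{itemize}
In the second case $a_k-1>a_i$ for all $i<k$, by parity and distinctness. Consequently, if $y<c$ are in $A$ and $y$ comes after $c$ in the regressive sequence, then $y=c-1$. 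To see this, write $y=a_j$ and $c=a_i$ with $j>i$. Then $a_j$ is not fresh, so $a_j=a_{j-1}-1$. If $j-1>i$, then $a_{j-1}>a_i$ forces $a_j>a_i$, which is impossible. So $j-1=i$.

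Therefore, for any $c\in C$ with $c\geq n+2$, every $y\in A$ with $y\leq n$ precedes $c$ in the regressive sequence, and $n\in A\iff n\in\hat{f}(c)$. The set $\hat{f}(c)$ is computable uniformly in $c$ by iterating $f$ until it reaches $a_0=0$.
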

\begin{proof}
We show that the set $A$ constructed in Theorem \ref{regressivenotce} is introreducible. Let $f$ be the function constructed in the proof of Theorem \ref{regressivenotce}. Let $C\in[A]^\omega$. For $n\in\omega$, let $n_0$ be the second smallest number of $C$ larger than $n$. Recall that $\hat{f}(n_0)$ denotes the set $\{m\in\omega:\exists k\hs (m=f^k(n_0))\}$ where $f^k$ means applying $f$ $k$-times. If $n\in \hat{f}(n_0)$, then clearly, $n\in A$. If $n\notin \hat{f}(n_0)$, it is not hard to see that $n\notin A$ using the observation above. So we can compute whether $n\in A$ or $n\notin A$, and the procedure is uniform in $C\in[A]^\omega$.
\end{proof}
To get a regressive set which is not introreducible as well, we need to modify the $N$ requirements, so that the set $A$ we construct is further from being retraceable. In addition, we can require the set to be co-immune.
\begin{thrm}\label{regressive not intro T}
There exists a co-immune regressive set which is not introreducible, not c.e., and not retraceable.
\end{thrm}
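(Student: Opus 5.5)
We build $A$ (together with a partial computable regressing function $f=\bigcup_s f_s$ and approximations $a_{e,s}\to a_e$, $A=\{a_0,a_1,\dots\}$) by a finite injury argument modelled on the proof of Theorem \ref{regressivenotce}, with the $Q_e$ strategy from Proposition \ref{co-immune req} added. The requirements are
\begin{align*}
P_e&:\text{if }W_e\text{ has infinitely many even numbers, then }W_e\neq A,\\
Q_e&:\text{if }W_e\text{ is infinite, then }W_e\cap A\neq\emptyset,\\
N_e&:\Phi_e^{S}\neq A,
\end{align*}
with priority $P_0>Q_0>N_0>P_1>\cdots$. Here $S\in[A]^\omega$ is built alongside $A$ as $S=\bigcup_s\sigma_s$, as in condition (4) of Theorem \ref{intro e neither}.

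Non-retraceability need not be handled separately. By Proposition \ref{0'comp}, $A\leq_T 0'$ automatically once co-immunity holds. Retraceable sets are uniformly introreducible, so meeting every $N_e$ already rules out retraceability. Likewise, meeting every $P_e$ shows $A$ is not c.e., provided we keep infinitely many even numbers in $A$, which we do by always choosing new elements even, as before. A $Q_e$ strategy acts exactly as in Proposition \ref{co-immune req}. It waits for some $m\in W_{e,s}$ that either lies outside $\dom(f_s)$ and is larger than the current restraint, or already sits on the current path of $\dom(f_s)$ and respects all higher priority restraints. It then attaches $m$ (or just uses it) as the next element of the regressive sequence and injures lower priorities. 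The verification that $Q_e$ is eventually permanently met, and that $A$ is the unique infinite path of $\dom(f)$, should be copied from there.

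The new ingredient is $N_e$, which must exploit the fact that a regressing function need not decrease. $N_e$ chooses a fresh large even witness $w$ and puts some fresh elements of $A$ into $S$, but not $w$ and not the elements near it. It then waits for $\Phi_e^{\sigma_s}(w)\downarrow$, with use $u$. If the output is $0$, we put $w$ into $A$ at the end of the current path: reattach the tail of the path beyond the current top $a_{l,s}$ (all elements larger than $u$) so that $f(w)=a_{l,s}$ and the next element maps to $w$. If the output is $1$, we keep $w$ out of $A$ forever. Because $S\restriction u$ is already fixed, we must arrange that every element of $A\restriction u$ used by the computation stays in $A$ and that $S\restriction u\subseteq A$ is preserved.

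This is why elements of $A$ are placed with $f$-predecessors far larger than themselves. In the path order, $w$ comes after elements much bigger than $u$, yet $w<u$ numerically. So a subset $C\in[A]^\omega$ containing only large elements cannot tell whether a small number $w$ was inserted later. This is the feature that destroys introreducibility, and the remark preceding the theorem says precisely that the construction must move away from ``$f(n)>n\Rightarrow f(n)=n+1$''.

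The main obstacle is the interaction of this insertion with regressiveness and co-immunity. Inserting $w$ changes $f$ on an element already on the path. Since $f$ is a partial computable function that cannot be changed, we cannot redefine $f$ on old elements. The fix I would try is to never redefine $f$ but to abandon the old tail. The old tail elements beyond $a_{l,s}$ are thrown out of $A$ and become dead finite branches of $\dom(f)$. We continue from $w$ with fresh elements, as in the $N_e$ action of Theorem \ref{regressivenotce}. Thus the witness $w$ must always be kept free, with $w\notin\dom(f_s)$, until $N_e$ acts. What remains to check is:
\begin{enumerate}
\item abandoning tails never removes elements restrained by higher priority $Q$, $P$, or $N$ requirements; for this the tail beyond $a_{l,s}$ must consist only of elements added after $N_e$ set up its witness;
\item each abandoned branch is finite, so the tree of Proposition \ref{regressingtree} still has $A$ as its unique infinite path;
\item each requirement acts finitely often, so $a_e=\lim_s a_{e,s}$ exists.
\end{enumerate}
The hard part is reconciling condition 1 with the need for $S$ to contain elements of the path that remain in $A$ permanently. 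To handle this, $N_e$ should add to $S$ only elements of the path up to $a_{l,s}$, which $N_e$ restrains, and insert $w$ after them, as in Theorem \ref{regressivenotce}.
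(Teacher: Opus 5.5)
Your route differs from the paper's. You build an explicit $S\in[A]^\omega$ and meet $\Phi_e^S\neq A$ directly, as in condition (4) of Theorem~\ref{intro e neither}. But the $N_e$ strategy in the form you finally adopt fails, exactly at the point you leave open.

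\textbf{The gap.} In Theorem~\ref{intro e neither} the set is c.e., so $S\subseteq A$ is free. Here elements leave $A$, so $S$ must be approximated. Moreover, $S$ must keep growing while $N_e$ waits, since otherwise $S$ is finite whenever $N_e$ waits forever. Your condition 1 fixes $a_{l,s}$ when $N_e$ sets up. Then every element entering $S$ during the wait lies in the tail beyond $a_{l,s}$, whether it was added by $N_e$, by lower-priority requirements, or by the routine growth of $S$.

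Nothing bounds the use $u$ below these elements: they exceed $w$, not $u$. So the parenthetical ``all elements larger than $u$'' is unjustified. Abandoning the tail removes these elements from $A$, hence from $S$. That changes $S\restriction u$ and destroys the very computation $\Phi_e^{\sigma_s}(w)$ you are diagonalizing against. Restricting $N_e$ itself to add only elements up to $a_{l,s}$ does not remove this conflict.

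\textbf{How to repair it.} The obstacle is self-inflicted. Your first description never redefines $f$ on an old element and abandons nothing: put $w$ at the \emph{end} of the current path, with $f(w)$ the current last element, and hang all later fresh elements below $w$. Have $N_e$ then restrain the whole current path, and let only fresh numbers (which exceed $u$) enter $S$ afterwards; then $S\restriction u$ survives. Take $S$ to consist of fresh path elements, dropped from $S$ forever when they leave $A$. With that, your direct approach can plausibly be completed along standard finite-injury lines.

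\textbf{The paper's route.} The paper avoids building $S$ altogether. Since the set is regressive, it is uniformly introenumerable. So by Jockusch's Intersection Theorem it suffices to defeat each $\Phi_e$ as a \emph{uniform} reduction. The requirement $\hat N_e$ waits for any finite $F\subseteq B_s$ with $\min F>n_e$ and $\Phi_e^F(n_e)\downarrow$. It then restrains the current path $b_0,\dots,b_k$ so that $F$ stays in $B$. On output $0$ it appends $n_e$ at the end of the path, regressing it to the current top $b_k$; on output $1$ it keeps $n_e$ out. Since $F$ extends to an infinite subset of $B$ agreeing with it up to the use, $\Phi_e$ fails.

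\textbf{A separate problem with $Q_e$.} Your clause (2) must allow any $m\in\dom(f_s)$ whose $\hat f_s(m)$ contains the higher-priority restrained elements, switching the path through $m$ as in Proposition~\ref{co-immune req}. Restricting to $m$ already on the current path breaks the verification you copy from there. An infinite $W_e$ may enumerate only numbers that already sit on abandoned branches, and then $Q_e$ never acts.
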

\begin{proof}
To ensure that the set we build, which we denote by $B$, is not introreducible, it suffices to ensure that it is not uniformly introreducible by Jockusch's Intersection Theorem; noting that we ensure $B$ is regressive, and hence uniformly introenumerable. Also, $B$ is not retraceable if it is not introreducible.

We modify the $N_e$ requirements from Theorem \ref{regressivenotce} as the following requirements
\begin{align*}
\hat{N}_e&:\text{ if there is a finite }F\subseteq B\text{ such that }\min\{F\}>n_e\text{ and }\Phi_e^F(n_e)\downarrow,\\
&\hspace{0.3cm}\text{ then }B(n_e)\neq\Phi_e^F(n_e)
\end{align*}
where $n_{e}=\lim_s n_{e,s}$ is a witness that we define below for the requirement $\hat{N_e}$. For the co-immunity, as in Proposition \ref{co-immune req}, we have the requirements
$$S_e: \text{if }W_e\text{ is infinite, then }W_e\cap B\neq\emptyset.$$
And as in Theorem \ref{regressivenotce}, to ensure that $B$ is not c.e., we meet the requirements
$$R_e:\text{if }W_e\text{ has infinitely many even numbers, }W_e\neq B.$$
A difference is that, in Theorem \ref{regressivenotce}, once a $P$ requirement acts, it is satisfied permanently, whereas in this proof, $R$ requirements may get injured by a higher priority requirement co-immunity requirement.

The goal is that $\hat{N_e}$ ensures that the set $B$ we build is not uniformly introreducible via $\Phi_e$, where the witness $n_e$ will do the diagonalization against $\Phi_e$. To ensure that $B$ is still regressive, we build a partial computable function $F=\bigcup_sF_s$ and the numbers $b_{e,s}$. In the end, we will have $b_e=\lim_sb_{e,s}$ and $B=\{b_0,b_1,\dots\}$. The requirements $R_e,S_e$ and $\hat{N}_e$'s priorities are assigned as $R_1>S_1>\hat{N}_1>R_2>S_2>\hat{N}_2>\cdots$. We make sure that $b_{0,0}=b_0=0$ and every number $m$ defined in the domain of $F$ is such that $F^n(m)=b_0$ for some $n$.
\vspace{0.3cm}

\noindent\emph{Construction.} Stage $0$: Start with $B_0=\{b_{0,0}=0,b_{1,0}=2\}$ and $F_0=\{(0,0),(2,0)\}$. We consider $0$ being restricted at all times. Let $\hat{N}_1$ hold its witness $n_{1,0}=1$. Each witness an $\hat{N}_e$ requirement holds is restricted from joining $B$ by $\hat{N}_e$ until it acts.

Stage $s+1$: Having $B_s=\{b_{0,s},\dots,b_{k,s}\}$ with $k\leq s$ for some $k$ and having $F_s$, let the highest priority requirement with $e\leq s$ act, following the below instruction. If there is no such requirement, then go to the last paragraph of this construction.

\emph{How they act.} The $R$ requirements are similar to those in Theorem \ref{regressivenotce}. We say $R_e$ requires attention at stage $s+1$ if there is some even number $m$ unrestricted by the higher priority requirements such that $\phii_e(m)\downarrow$. If $R_e$ is the highest priority requirement which requires attention, let it act by restricting $m$ from joining $B$ if $m\notin B_s$. If $b_{i,s}=m$ for some $i$, then $R_e$ acts by undefining $b_{j,s}$ for $i\leq j\leq k$, i.e.\ remove them from $B_s$ and undefine them. Then let $R_e$ put a restriction on $m$ from joining $B$. Further, in the latter case, to ensure that $B$ has infinitely many even numbers, we define $b_{i,s+1}<\cdots<b_{k,s+1}$ to be the even numbers larger than all numbers mentioned so far, and let $R_e$ restrict all of them from being removed. Define $F_{s+1}(b_{k,s+1})=b_{k-1,s+1},\dots,F_{s+1}(b_{i,s+1})=b_{i-1,s}$.

We say $\hat{N}_e$ requires attention at stage $s+1$ if $n_{e,s}$ has been defined previously, and there exists a finite set $F\subseteq B_s$ such that $\min\{F\}>n_{e,s}$ and $\Phi_{e,s}^F(n_{e,s})\downarrow$. When $\hat{N}_e$ is the highest priority requirement which requires attention, it acts as follows:

\begin{enumerate}
\item If $\Phi_e^F(n_{e,s})\downarrow=0$, then define $b_{k+1,s+1}=n_{e,s}$. Then $\hat{N}_e$ requirement puts a restriction on $b_{0,s},\dots,b_{k+1,s+1}$ from being removed. We set $F_{s+1}(b_{k+1,s+1})=b_{k,s}$.

\item If $\Phi_e^F(n_{e,s})\downarrow=1$, then $\hat{N}_e$ requirement restricts $n_{e,s}$ from joining $B$. Also, $\hat{N}_e$ puts a restriction on the elements $b_{0,s},\dots,b_{k,s}$ from being removed---this ensures that the use $F$ is a subset of $B$.
\end{enumerate}
We now describe how the $S$ requirements act. The $S$ requirements are similar to the $Q$ requirements of Proposition \ref{co-immune req}. Just as in Proposition \ref{co-immune req}, at stage $s+1$, we say that a number $x\in \dom(F_s)$ \emph{respects priorities up to $R_e$} if
\begin{itemize}
\item for every number $y$ which is restricted from being removed from $B$ by a requirement of a higher or equal priority than $R_e$ at stage $s+1$, we have $y\in \hat{F_s}(x)$.
\end{itemize}
We say $S_e$ requires attention at stage $s+1$ if there is some $m\in W_{e,s}$ such that $m>n_{i,s}$ for $i<e$ and either
\begin{enumerate}
\item $m$ is not in $\dom(F_s)$ yet, or
\item $m$ is in $\dom(F_s)$ and $m$ respects priorities up to $R_e$.
\end{enumerate}
If $S_e$ is the highest priority requirement which requires attention, in case (1), let it act by setting $b_{k+1,s+1}=m$ and $F_{s+1}(b_{k+1,s+1})=b_{k,s}$. In this case, let $S_e$ restrict $m$ and $b_{i,s}$ for $i\leq k$ from being removed.

In case (2) where we have $m\in \dom(F_s)$, say $l$ is the smallest number such that $F_s^l(m)=0$. Then set $b_{i,s+1}=F_s^{l-i}(m)$ for $0\leq i\leq l$. In addition, if $l<k$, define $b_{l+1,s+1}<\cdots<b_{k,s+1}$ to be the smallest even numbers larger than any number mentioned so far to ensure that $B$ contains infinitely many even numbers. Define $F_{s+1}(b_{k,s+1})=b_{k-1,s+1},\dots,F_{s+1}(b_{1,s+1})=b_{0,s+1}$ and let $S_e$ restrict $b_{i,s+1}$ for $i\leq k$ from being removed.

To finish defining $B_{s+1}$ and $F_{s+1}$, let $j$ be the smallest number such that $b_{j,s}$ is undefined at this point. Define $b_{j,s}$ to be the smallest even number greater than all numbers mentioned so far. Redenote each $b_{i,s}$ by $b_{i,s+1}$ for each $i$ such that $b_{i,s+1}$ is not yet defined. Define $F_{s+1}(b_{j,s+1})=b_{j-1,s+1}$. For the numbers in $\dom(F_s)$ that is not yet defined on $F_{s+1}$, define them on $F_{s+1}$ so that $F_{s+1}$ extends $F_s$. We set $B_{s+1}$ to be the collection $\{b_{0,s+1},\dots,b_{j,s+1}\}$. Let $i$ be the smallest number such that $\hat{N}_i$ is unsatisfied yet and does not have its witness. Let $n_{i,s+1}=b_{j,s+1}-1$.
\vspace{0.3cm}

\noindent\emph{Verification.} Clearly, this is a finite injury construction---once a requirement acts, all lower priority requirements are injured, meaning that their restrictions are cleared, and their witnesses are cleared. We show that each $b_e:=\lim_s b_{e,s}$ is defined and $B$ has infinitely many even numbers.

To see that $b_e=\lim_sb_{e,s}$ exists, let $s$ be a stage where $b_{e,s}$ is defined for the first time. Note that $\hat{N}$ requirements never remove or undefine $b_{e,s}$. If no $R$ and $S$ requirements remove $b_{e,s}$ from $B$, then $b_e=b_{e,s}$. If at some stage $t>s$, some $R_i$ (or $S_i$) requirement removes $b_{e,s}$ from $B_t$, then let $T>t$ be a large enough stage such that no requirement of a higher priority than that of $R_i$ (or $S_i$) act from stage $T$. Then $b_e=b_{e,T}$ because $b_{e,T}$ is restricted by some requirement of higher or equal priority than that of $R_i$ (or $S_i$) from being removed: note that from stage $t$, whenever $b_{e,-}$ changes, the new $b_{e,-}$ is restricted from being removed by some higher priority requirement. To see that each $n_e=\lim_s n_{e,s}$ exists, let $s$ be a stage where $\hat{N}_e$ is never injured from then on. Then $n_e=n_{e,s}$.

To see that $B$ has infinitely many even numbers, note that in the construction, whenever a requirement acts and removes some even numbers from $B$, it also adds new even numbers into $B$ and restricts them from being removed. We now show that $B$ is not c.e., not introreducible, co-immune, and regressive via $F=\bigcup_s F_s$.

\emph{$B$ is not c.e.} For if it were, then $B=W_e$ for some $W_e$ which has infinitely many even numbers. Let $s+1$ be some large enough stage such that all requirements of a higher priority than that of $R_e$ do not act from then on. If $R_e$ ever acts since stage $s+1$, it is clear that we get a contradiction. If $R_e$ does not ever act since stage $s+1$, we may let $M\in B$ be a large enough even number such that has never been mentioned by stage $s+1$. Because $R_e$ does not ever act since stage $s+1$, we have $\phii_e(M)\uparrow$, which is a contradiction.

\emph{$B$ is regressive via $F=\bigcup_sF_s$.} Clearly, $F$ is a partial computable function. So it suffices to show that for all $e+1$, $F(b_{e+1})=b_e$. Note that by the construction, for all $e,s$, whenever $b_{e+1,s}$ is defined, we have $F_s(b_{e+1,s})=b_{e,s}$. Further, when $e,s$ are such that $b_{e+1,t}=b_{e+1,s}=b_{e+1}$ for all $t\geq s$, it must also be that $b_{e,t}=b_{e,s}=b_e$ for all $t\geq s$, as the only way the value of $b_{e,s}$ changes is that $b_{e,s}$ gets removed, which would also remove $b_{e+1,s}$. Hence, let $s$ be a large enough stage such that $b_{e+1,s}=b_{e+1,t}$ for all $t\geq s$. Then $b_{e,s}=b_e$, and we have $F_s(b_{e+1})=F_s(b_{e+1,s})=b_{e,s}=b_e$. Since $F$ extends $F_s$, this shows that $F$ is a regressing function for $B$.

\emph{$\hat{A}$ is co-immune.} We show that $\ol{B}$ does not contain $W_e$ for each $e$ such that $W_e$ is infinite. If there is some stage $s$ such that $S_e$ acts and is never injured from then on, then it is clear that $W_e\nsubseteq\ol{B}$. If there is no such stage, there must be some stage $s$ such that $S_e$ is unsatisfied and never acts from then on. Let $t>s$ be a large enough stage such that no requirement of a higher priority than that of $S_e$ acts from stage $t$. For if $W_e\subseteq\ol{B}$ and $W_e$ is infinite, then we may let $x\in W_e$ be a number larger than any number mentioned by stage $t$ of the construction. Let $t_0\geq t$ be a stage such that $\phii_{e,t_0}(x)\downarrow$. Then no matter $x$ is in or not in $\dom(F_{t_0})$, $S_e$ requires attention and acts at stage $t_0$---if $x\in \dom(F_{t_0})$, this implies that some requirement put $x$ into the domain after stage $t$, so that $x$ necessarily respects priorities up to $R_e$. This is a contradiction.

\emph{$B$ is not introreducible.} As noted earlier, it suffices to show that $B$ is not uniformly introreducible via $\Phi_e$ for each $e\in\omega$. Assume $B$ is uniformly introreducible via $\Phi_e$ for a contradiction. Let $s$ be a large enough stage so that $n_e=n_{e,s}$ and no higher priority requirement than that of $\hat{N}_e$ ever acts from then on. Note that there is a finite set $F\subseteq B$ such that $\min\{F\}>n_e$ and $\Phi^F_e(n_e)\downarrow$ by the uniform introreducibility---otherwise, $\{k\in B:k>n_e\}$ is an infinite subset of $B$ which does not witness that `$B$ is uniformly introreducible via $\Phi_e$'. But then we can find some large enough stage $s_0>s$ such that $F\subseteq B_{s_0}$, $\min\{F\}>n_e$, and $\Phi_{e,s_0}^F(n_e)\downarrow$. Then $\hat{N}_e$ requirement will act at that stage, diagonalizing against $\Phi_e$.
\end{proof}
\nocite{*}
\bibliographystyle{alpha}
\bibliography{sample}
\end{document}